\DeclareMathOperator{\res}{Res}
\DeclareMathOperator{\ind}{Ind}
\DeclareMathOperator{\syl}{Syl}
\DeclareMathOperator{\End}{End}
\DeclareMathOperator{\Stab}{Stab}
\DeclareMathOperator{\Aut}{Aut}
\DeclareMathOperator{\Inf}{Inf}
\renewcommand{\leq}{\leqslant}
\renewcommand{\geq}{\geqslant}
\renewcommand{\unlhd}{\trianglelefteqslant}
\begin{document}
\newtheorem{defi}{Definition}[section]
\newtheorem{rem}[defi]{Remark}
\newtheorem{prop}[defi]{Proposition}
\newtheorem{ques}[defi]{Question}
\newtheorem{lemma}[defi]{Lemma}
\newtheorem{cor}[defi]{Corollary}
\newtheorem{thm}[defi]{Theorem}
\newtheorem{expl}[defi]{Example}
\newtheorem{conj}[defi]{Conjecture}
\newtheorem{claim}[defi]{Claim}
\newtheorem*{main}{Theorem}
\newtheorem{noth}[defi]{}
\renewcommand{\proofname}{\textsl{\textbf{Proof}}}
\newtheorem*{conjintro}{Conjecture}
\newtheorem*{quesintro}{Question}

\begin{center}
{\bf\Large Source Algebras of Blocks, Sources of Simple Modules,\\
\vspace*{0.5em} and a Conjecture of Feit}

\bigskip
Susanne Danz\footnote{{\bf Current address:} Department of Mathematics, 
University of Kaiserslautern, P.O. Box 3049, 
         67653 Kaiserslautern, Germany, danz@mathematik.uni-kl.de} 
and J\"urgen M\"uller 

\today 

\begin{abstract}
\noindent
We verify a finiteness conjecture of Feit on sources of simple modules
over group algebras for various classes of finite groups 
related to the symmetric groups.

\noindent
{\bf Keywords:} Feit's Conjecture, Puig's Conjecture, vertex, source, simple module, source algebra

\noindent
{\bf MR Subject Classification:} 20C20
\end{abstract}
\end{center}

\section{Introduction and Results}\label{intro}

\begin{noth}\normalfont  
Amongst the long-standing conjectures in modular representation theory
of finite groups is a finiteness conjecture concerning
the sources of simple modules over group algebras, due to Feit \cite{Feit1}
and first announced at the Santa Cruz Conference on Finite Groups in 1979:
  
\medskip
By Green's Theorem \cite{Green}, given a finite group $G$ 
and an algebraically closed field
$F$ of some prime characteristic $p$, one can assign to
each indecomposable $FG$-module $M$ a $G$-conjugacy class 
of $p$-subgroups of $G$, the {\it vertices of $M$}.
Given a vertex $Q$ of $M$, there is, moreover, 
an indecomposable $FQ$-module $L$ such
that $M$ is isomorphic to a direct summand of the induced module
$\ind_Q^G(L)$; such a module $L$ is called a \textit{($Q$-)source} of $M$. 
Any $Q$-source of $M$ has vertex $Q$ as well, and
is determined up to
isomorphism and conjugation with elements in $N_G(Q)$.
Vertices of simple $FG$-modules have 
a number of special features not shared by
vertices of arbitrary indecomposable
$FG$-modules; see, for instance, \cite{Erd} and \cite{Kn}.
Feit's Conjecture in turn predicts also 
a very restrictive structure of sources of simple modules, and can be 
formulated as follows:

\begin{conjintro}[Feit]
Given a finite $p$-group $Q$, there are only finitely many isomorphism 
classes of indecomposable $FQ$-modules occurring as sources of
simple $FG$-modules with 
vertex $Q$; here $G$ varies over all finite groups containing 
$Q$.
\end{conjintro}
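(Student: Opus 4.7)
The natural approach is to reduce Feit's Conjecture to the stronger Puig Conjecture on source algebras of blocks, which is exactly the circle of ideas flagged by the paper's title. Recall that if $S$ is a simple $FG$-module with vertex $Q$, then $S$ belongs to some block $B$ of $FG$ whose defect group $D$ contains a $G$-conjugate of $Q$. Fix a source idempotent $i\in B^D$, and form the source algebra $A:=iBi$. Then $A$ is an interior $D$-algebra, $Bi$ induces a Morita equivalence between $B$ and $A$, and under this equivalence $Q$-sources of simple $B$-modules correspond (up to $N_D(Q)$-conjugacy) to $Q$-sources of the matching simple $A$-modules. Hence the isomorphism class of $A$ as an interior $D$-algebra \emph{determines} all $Q$-sources of simple $B$-modules whose vertex is a subgroup of $D$ containing $Q$.

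Puig's Conjecture asserts that for a fixed finite $p$-group $D$, only finitely many isomorphism classes of interior $D$-algebras arise as source algebras of blocks with defect group $D$, as $G$ varies over finite groups. The reduction above then delivers Feit's Conjecture: given a fixed $p$-group $Q$, any $G\geq Q$ realising $Q$ as a vertex of a simple module places $Q$ inside some defect group $D$; enumerating the possible $D$'s up to isomorphism and applying Puig's Conjecture to each yields only finitely many source algebras, hence only finitely many sources.

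My plan, following this philosophy, is to verify Puig's Conjecture for the families of groups considered in the paper---symmetric groups, alternating groups, double covers, and wreath products of smaller groups with symmetric groups. For the symmetric groups $S_n$ themselves, Scopes' reduction on $[w{:}k]$-pairs and the Chuang--Rouquier derived equivalences coming from $\mathfrak{sl}_2$-categorification reduce the problem to a bounded list of Rouquier blocks per fixed defect, where the source algebras admit an explicit description. For the auxiliary groups, a Clifford-theoretic transfer along normal inclusions $A_n\unlhd S_n$, along the central extensions giving rise to spin blocks, and between base and top group in wreath constructions should propagate the source-algebra finiteness, provided one tracks the interior $D$-algebra structure carefully through each step.

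The main obstacle, I expect, lies in executing these transfers at the full source-algebra level rather than settling for mere Morita-equivalence finiteness: sources of simples are \emph{not} Morita invariants, so the finer interior-algebra data must be preserved under every reduction. In particular, the twisted group algebras and $2$-cocycles appearing in spin blocks of covering groups of $S_n$, together with the clash between Puig's local-to-global picture and the base/top-group interplay inherent to wreath products, are the places where the bulk of the technical effort is likely to be concentrated.
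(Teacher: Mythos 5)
Your reduction from Feit's Conjecture to Puig's Conjecture has a genuine gap at the step ``enumerating the possible $D$'s up to isomorphism and applying Puig's Conjecture to each.'' For a fixed vertex $Q$, as $G$ ranges over all finite groups containing $Q$, there is a priori no bound on the order of the defect groups $D$ of the blocks containing the relevant simple modules, so this enumeration is over infinitely many isomorphism classes of $p$-groups and Puig's Conjecture (which fixes $D$) gives no finiteness. This is not a technicality: for $p=2$ the needed bound genuinely fails. The paper's Remark on the vertex-bounded-defect property exhibits simple modules for $\mathrm{PSL}_2(q)$ (resp.\ $\mathrm{SL}_2(q)$, $\mathrm{GU}_2(q)$, $\mathrm{PSL}_3(q)$), $q\equiv 1 \pmod 4$, with vertex the Klein four-group (resp.\ $\mathfrak{Q}_8$, $V_4$, $V_4$) lying in blocks whose dihedral, quaternion or semidihedral defect groups have unbounded order. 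So your reduction, as stated, cannot prove Feit's Conjecture from Puig's Conjecture for the class of all finite groups; the paper is explicit that ``a proof of Puig's Conjecture alone might not suffice.'' The correct reduction (the paper's Theorem on reducing Feit to Puig) requires, as an additional hypothesis, that the class of groups under consideration has the \emph{vertex-bounded-defect property}: an explicit bound $|D|\leq c(|Q|)$ for simple modules with vertex $Q$. Establishing such bounds is where the bulk of the paper's work lies (via Kn\"orr's theorem on self-centralizing Brauer pairs for $\mathfrak{A}_n$ and the double covers, and direct vertex arguments for the Weyl groups), and your proposal does not address it at all.

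Two further points. First, the statement you were asked to prove is the general conjecture, which remains open; the paper only establishes it for the specific families listed in its main theorem, so even a repaired version of your argument would yield only the restricted result. Second, your appeal to Chuang--Rouquier derived equivalences for the symmetric group case is off target: sources of simple modules and source algebras are not invariants of derived (or even Morita) equivalence, as you yourself note later. What is actually used for $\mathfrak{S}_n$ is the Puig--Scopes result that the interior-algebra isomorphism type of the source algebra is constant along Scopes' $[w{:}k]$-pairs, combined with the fact that only finitely many blocks of given weight fail to admit such a reduction; the analogues for $\mathfrak{A}_n$, the double covers and the Weyl groups are due to Kessar.
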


While the conjecture remains open in this generality, weaker versions of
it are known to be true:
by work of Dade \cite{Dade}, Feit's Conjecture holds when demanding
the sources in question have dimension at most $d$, for a given
integer $d$. Furthermore, Puig \cite{Puigpsol,Puig} has shown that
Feit's Conjecture holds when allowing the group $G$ to
vary over $p$-soluble groups only, and Puig \cite{Pu} 
has also shown that Feit's Conjecture holds for the symmetric groups.
The aim of this paper now is to pursue the idea of restricting to 
suitable classes of groups further, and to prove the following:

\begin{main}\label{thm:main}
Feit's Conjecture holds when letting the group $G$ vary over 
the following 
groups only:
$$\{\mathfrak{S}_n\}_{n\geq 1},\quad
  \{\mathfrak{A}_n\}_{n\geq 1},\quad
  \{\widetilde{\mathfrak{S}}_n\}_{n\geq 1},\quad
  \{\widehat{\mathfrak{S}}_n\}_{n\geq 1},\quad
  \{\widetilde{\mathfrak{A}}_n\}_{n\geq 1},\quad
  \{\mathfrak{B}_n\}_{n\geq 2},\quad
  \{\mathfrak{D}_n\}_{n\geq 4}.$$
\end{main}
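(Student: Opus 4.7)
The plan is to take Puig's theorem for $\{\mathfrak{S}_n\}_{n\geq 1}$ as the starting point and reduce the remaining six families to it by exploiting the standard structural relations with the symmetric groups. Throughout, for a fixed finite $p$-group $Q$ and a family $\{G_n\}_n$, the aim is to exhibit a finite set of isomorphism classes of indecomposable $FQ$-modules that contains all $Q$-sources of simple $FG_n$-modules as $n$ varies.

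Two general reductions drive the argument. First, if $H\trianglelefteq G$ with $p\nmid[G:H]$, then Clifford theory together with Green correspondence puts the $Q$-sources of simple $FG$-modules and the $Q$-sources of simple $FH$-modules into an essentially bijective relationship. Second, if $Z\leq Z(G)$ has order coprime to $p$, then $FG$ splits blockwise into twisted group algebras of $G/Z$, and vertex-source pairs descend faithfully through this decomposition. These two facts handle every odd-prime case uniformly: $\mathfrak{A}_n$ reduces to $\mathfrak{S}_n$ and $\mathfrak{D}_n$ reduces to $\mathfrak{B}_n$ via the first mechanism, and the spin covers $\widetilde{\mathfrak{S}}_n$, $\widehat{\mathfrak{S}}_n$, $\widetilde{\mathfrak{A}}_n$ reduce to $\mathfrak{S}_n$ and $\mathfrak{A}_n$ via the second. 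For $\mathfrak{B}_n=C_2\wr\mathfrak{S}_n$ at odd $p$, semisimplicity of $FC_2$ makes every block Morita equivalent to a block of some product $\mathfrak{S}_a\times\mathfrak{S}_b$, so Puig's theorem applies once more.

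The main obstacle is characteristic $p=2$, where neither the indices $[\mathfrak{S}_n:\mathfrak{A}_n]$ and $[\mathfrak{B}_n:\mathfrak{D}_n]$ nor the kernels of the relevant central extensions are coprime to $p$, and where $FC_2$ is itself local. I would treat the families in an order that respects the implications above. For $\mathfrak{A}_n$ at $p=2$, every simple $F\mathfrak{A}_n$-module occurs as a summand of the restriction of some simple $F\mathfrak{S}_n$-module, its vertex sits with index at most $2$ inside a vertex of the latter, and its source is a summand of an appropriate restriction; hence the source is drawn from a finite list supplied by Puig's theorem for $\mathfrak{S}_n$. The same argument then transfers from $\mathfrak{B}_n$ to $\mathfrak{D}_n$ once $\mathfrak{B}_n$ is settled. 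For $\mathfrak{B}_n$ itself at $p=2$, I would use a direct block-theoretic analysis, either via the embedding $\mathfrak{B}_n\hookrightarrow\mathfrak{S}_{2n}$ or by analysing the blocks of $\mathfrak{B}_n$ through the $\mathfrak{S}_n$-action on characters of the base group $(C_2)^n$, in order to import finiteness from a Puig-type statement on a larger symmetric group. For the spin covers at $p=2$, one analyses the faithful blocks separately; in favourable cases these are Morita equivalent to blocks of ordinary symmetric or alternating groups of smaller rank, again bringing Puig's theorem to bear.

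The principal technical difficulty is therefore concentrated in the characteristic-$2$ analysis: one must control uniformly, as $n$ grows, how $Q$-sources transfer across index-two normal inclusions and across central $C_2$-extensions, keeping careful track of how simple modules split or fuse under the relevant restriction and induction functors. Making this uniform transfer precise — and ensuring that the resulting candidate list of $Q$-sources remains finite — is where the bulk of the work should lie.
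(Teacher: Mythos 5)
Your overall strategy---transferring Feit's Conjecture directly from $\{\mathfrak{S}_n\}$ to the other families by Clifford-theoretic reductions---is not the route the paper takes, and it breaks down precisely at the point you treat most casually: the double covers at odd $p$. Your second ``general reduction'' asserts that the central subgroup $\langle z\rangle\cong C_2$, of order coprime to $p$, lets the spin covers ``reduce to $\mathfrak{S}_n$ and $\mathfrak{A}_n$''. But the resulting blockwise splitting of $F\widetilde{\mathfrak{S}}_n$ into $F\mathfrak{S}_n$ and a \emph{twisted} group algebra of $\mathfrak{S}_n$ only reduces the non-faithful blocks; the faithful blocks carry the spin modules, which are genuinely new simple modules whose vertices and sources are in no way controlled by the ordinary representation theory of $\mathfrak{S}_n$ or $\mathfrak{A}_n$. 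This is exactly where the paper needs new input: Kessar's theorem that Puig's Conjecture holds for the double covers, combined with the bound of Proposition \ref{prop:boundcover} (resting on Cabanes' description of $C_{\widetilde{\mathfrak{S}}(\Omega)}(Q)$) showing that the defect group order of the block containing a simple spin module is at most $|Q|!$ for a vertex $Q$. Your proposal contains no substitute for this. Relatedly, your plan for the covers at $p=2$ (``analyse the faithful blocks separately; in favourable cases these are Morita equivalent to blocks of ordinary symmetric or alternating groups'') misreads the situation: at $p=2$ the centre $\langle z\rangle$ is a normal $2$-subgroup, hence lies in the kernel of every simple module, there are no faithful simple modules at all, and the case is disposed of by the easy inflation argument (vertices contain $Z$, sources are inflations), not by a Morita analysis.

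A second, structural point: the paper does not deduce Feit's Conjecture for these families from Feit's Conjecture for $\{\mathfrak{S}_n\}$. It proves a Reduction Theorem (Theorem \ref{thm:feitred}) showing that Puig's Conjecture on source algebras together with a ``vertex-bounded-defect'' property imply Feit's Conjecture, then invokes the known cases of Puig's Conjecture (Puig--Scopes for $\mathfrak{S}_n$; Kessar for $\mathfrak{A}_n$, the covers, and the Weyl groups) and establishes the explicit defect bounds of Theorem \ref{thm:vtxbnd} case by case in Sections \ref{sec:alt}--\ref{sec:weyl}, largely via Kn\"orr's theorem on self-centralizing Brauer pairs. The reductions of yours that do work (alternating groups and $\mathfrak{D}_n$ at odd $p$; inflation through normal $2$-subgroups at $p=2$) coincide with steps the paper also uses. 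But your $p=2$ reduction of $\mathfrak{A}_n$ to $\mathfrak{S}_n$ rests on the unproved claim that a vertex of a simple $F\mathfrak{A}_n$-module has index at most $2$ in a vertex of a covering simple $F\mathfrak{S}_n$-module with compatible sources; since $p$ divides $[\mathfrak{S}_n:\mathfrak{A}_n]$ this is not elementary Clifford theory (one needs Kn\"orr's normal-subgroup vertex theorem, and the source statement still requires an argument), and the paper deliberately avoids this route, instead combining Kessar's source-algebra result for $\mathfrak{A}_n$ with the bound $|P|\leq(|Q|+2)!/2$ of Proposition \ref{prop:boundAn}. As written, the proposal has a genuine gap at the spin blocks for odd $p$ and leaves the remaining hard transfers ($\mathfrak{A}_n$ and $\mathfrak{B}_n$ at $p=2$) as unexecuted sketches.
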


Here, $\mathfrak{S}_n$ and $\mathfrak{A}_n$ denote the
symmetric and alternating groups on $n$ letters, respectively.
Moreover, $\widetilde{\mathfrak{S}}_n$ and $\widehat{\mathfrak{S}}_n$
denote the 
double covers of the symmetric groups, and
$\widetilde{\mathfrak{A}}_n$ those of the alternating groups;
these groups are described in more detail in \ref{noth:schurcov}.
Finally, $\mathfrak{B}_n$ and $\mathfrak{D}_n$ denote the Weyl groups
of type $B_n$ and $D_n$, respectively; these groups are described 
in more detail in \ref{noth:setting}. 
\end{noth}

\begin{noth}\normalfont  
We will prove the above-mentioned main result by exploiting the 
connection between Feit's Conjecture and two other ingredients:
Puig's Conjecture regarding source algebras of blocks of group algebras, 
and a question raised by Puig (see \cite{Zh}) relating vertices
to defect groups:

\begin{conjintro}[Puig]
Given a finite $p$-group $P$, there are only finitely many isomorphism 
classes of interior $P$-algebras that are source algebras of a 
block of $FG$; here $G$ varies over all finite groups
containing $P$.
\end{conjintro}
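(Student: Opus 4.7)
The plan is to fix a finite $p$-group $P$ and show finiteness for the set of interior $P$-algebras of the form $iFGi$, where $G$ ranges over finite groups containing $P$, $b$ is a block of $FG$ with defect group isomorphic to $P$, and $i$ is a source idempotent for $b$. As a base case, Puig's own result \cite{Puig,Puigpsol} already handles $p$-soluble $G$, so the strategy is to reduce the general case to the $p$-soluble situation plus a controlled contribution coming from non-abelian composition factors, which by the classification of finite simple groups can be enumerated.

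Next I would attempt a Clifford-theoretic reduction, descending along a normal chain in $G$. Given a block $b$ with source idempotent $i$, one first passes to the inertial subgroup of a block of $O_{p'}(G)$ covered by $b$, using a source-algebra version of Fong--Reynolds to translate between source algebras over $G$ and over the stabilizer. One then analyzes the interaction of $i$ with the layer $E(G)$ and with the generalized Fitting subgroup $F^\ast(G)$. After this reduction one expects the source algebra to factor, up to Puig equivalence, as a tensor product of three kinds of pieces: a piece controlled by the $p$-soluble quotient (handled by the base case), a twist by a K\"ulshammer--Puig class accounting for a central extension by a $p'$-group, and a piece coming from each quasi-simple component $K$ covered by $b$, whose defect group embeds into a conjugate of $P$ and whose order is therefore bounded in terms of $|P|$.

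Finally, appealing to the classification, the quasi-simple contributions split into the standard families: for alternating and symmetric type the present paper's main theorem, together with Puig's result \cite{Pu} on symmetric groups, yields the required finiteness of source algebras; sporadic groups contribute only finitely many candidates for trivial reasons; and for Lie type groups one would draw on Brou\'e--Malle--Rouquier-type Morita equivalences coming from Deligne--Lusztig theory. Summing over finitely many isomorphism types of $F^\ast(G)$-configurations compatible with $P$ gives the result.

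The hard part, and the reason the conjecture remains open in this generality, is the reduction step: no Fong--Reynolds-type or Dade-type correspondence is presently known that respects the interior $P$-algebra structure of source algebras finely enough to yield a genuine tensor decomposition of source algebras across Clifford correspondences. Controlling how source algebras transform under central extensions by $p'$-groups, i.e.\ keeping track of the K\"ulshammer--Puig cocycles for the relevant fusion systems on $P$, is the principal technical obstacle; everything else in the plan is either known or a CFSG-style bookkeeping exercise in the style carried out in this paper for Feit's Conjecture.
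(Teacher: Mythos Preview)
The statement you are attempting to prove is \emph{Puig's Conjecture}, and the paper does not prove it; it merely states it as an open conjecture (see Conjecture~\ref{conj:puig} and the surrounding comments). The paper's contribution is in the opposite direction: it \emph{assumes} the known instances of Puig's Conjecture for specific families (symmetric groups via Puig~\cite{Pu} and Scopes~\cite{Sc}, alternating groups via Kessar~\cite{Kess}, double covers via Kessar~\cite{Kess3}, Weyl groups via Kessar~\cite{Kess2}) and combines them with the vertex-bounded-defect property to deduce Feit's Conjecture for those families, via the Reduction Theorem~\ref{thm:feitred}. So there is no ``paper's own proof'' of this statement to compare against.

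Your proposal is accordingly not a proof but a strategy outline, and you yourself concede as much in the final paragraph. Two concrete problems are worth naming. First, there is a circularity: you invoke ``the present paper's main theorem'' to handle the alternating and symmetric contributions, but the paper's main theorem concerns Feit's Conjecture, not Puig's; the relevant Puig-type input for those families is \cite{Pu,Sc,Kess,Kess3}, which the paper uses rather than proves. Second, and more seriously, the heart of your plan---a source-algebra-level Fong--Reynolds/Clifford reduction yielding a tensor factorization of source algebras across a normal chain, with controlled K\"ulshammer--Puig cocycle data---is precisely the missing ingredient that keeps Puig's Conjecture open. Absent such a reduction, the subsequent CFSG bookkeeping never gets off the ground: bounding the defect groups of the components in terms of $|P|$ does not by itself bound the number of source-algebra isomorphism types, because the glueing data over the fusion system on $P$ is not controlled. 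In short, your proposal correctly identifies where the difficulty lies, but it does not overcome it, and neither does the paper.
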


\begin{quesintro}
Suppose that $p>2$, that $G$ is a finite group,
and that $D$ is a simple $FG$-module with vertex $Q$.
Is the order of the defect groups of the block
containing $D$ bounded in terms of the group order $|Q|$?
\end{quesintro}

The corresponding question for $p=2$ is known
to have a negative answer, and we will elaborate on this 
in Remark \ref{rem:vertexbnd} in more detail.
For $p>2$, to the authors' knowledge,
there seem to be no examples known
where Puig's Question admits a negative answer, and Zhang \cite{Zh}
has proved a reduction to quasi-simple groups.
The Reduction Theorem \ref{thm:feitred} 
now shows that Puig's Conjecture
together with a positive answer to the previous question imply 
Feit's Conjecture for $p>2$.
A proof of Puig's Conjecture alone, however, might not suffice to
prove Feit's Conjecture; 
see also the remarks following \cite[Thm. 38.6]{Thev}.

\medskip

Despite the fact that these conjectures
have been around for quite a while, and belong to folklore in
modular representation theory of finite groups, we have not been
able to find a reference where they have been stated formally.
Thus they are restated here as Conjectures \ref{conj:feit} and 
\ref{conj:puig}, respectively, in a category-theoretic language 
we are going to develop, and which will also be used to 
formulate our Reduction Theorem \ref{thm:feitred}.

\medskip
Our strategy for proving our main Theorem \ref{thm:summary} is as follows: 
by work of Kessar \cite{Kess3,Kess2,Kess}, 
Puig \cite{Pu}, and Scopes \cite{Sc}, Puig's Conjecture is 
known to be true for the
groups considered here.
We will, therefore, show that Puig's Question has an
affirmative answer when allowing the groups to vary over the 
groups in the theorem only, by determining in Theorem \ref{thm:vtxbnd}
explicit upper bounds on the respective defect group orders,
and regardless of whether $p$ is even or odd.
It should be pointed out that such bounds can also be derived from work
of Zhang \cite{Zh}, which are, however, much weaker than the ones we 
get, and are thus hardly useful when actually trying to
compute vertices of simple modules in practice.

\medskip
The strategy to prove the bounds on defect group orders
in terms of vertices, in turn, is for part of the
cases based on Kn\"orr's Theorem, see Remark \ref{rem:knoerr}, 
ensuring the existence of a self-centralizing Brauer pair for 
any subgroup of a group $G$ being a vertex of a simple $FG$-module.
This reduces Puig's Question to asking, more strongly,
whether defect group orders can even be bounded in 
terms self-centralizing Brauer pairs. This idea turns out
to be successful for the alternating groups and
the 
double covers of the symmetric and alternating groups.
In particular, in Theorem \ref{thm:self} and the subsequent
Remark \ref{rem:self} we derive a detailed picture of the 
self-centralizing Brauer pairs for the alternating groups
in characteristic $p=2$, which might be of independent interest.

\medskip
Moreover, we would like to point out that, in particular, for
the case of the alternating groups, we have been examining
various examples explicitly, where the computer algebra 
systems {\sf GAP} \cite{GAP} and {\sf MAGMA} \cite{MAGMA}
have been of great help; we will specify later on where
precisely these have been invoked.
\end{noth}

\begin{noth}\normalfont
The paper is organized as follows: 
in Section \ref{sec:vsp} we introduce our notational set-up, 
define our notions of the category of interior algebras 
and vertex-source pairs of indecomposable modules over group algebras,
and recall the notion of source algebras.
Then, in Section \ref{sec:feitpuig},
we formulate Feit's and Puig's Conjectures in 
our category-theoretic language, prove the Reduction Theorem \ref{thm:feitred},
and state Theorem \ref{thm:vtxbnd} in order to prove
the main Theorem \ref{thm:summary}.
Sections \ref{sec:alt}--\ref{sec:weyl} are then
devoted to proving Theorem \ref{thm:vtxbnd} for 
the alternating groups, the 
double covers of the symmetric
and alternating groups, and the Weyl groups appearing in our
main theorem, where in the former two cases
we pursue the idea of using self-centralizing Brauer pairs,
while for the Weyl groups appearing in our
main theorem we are content with looking at vertices directly.
Finally in Section \ref{sec:semidirect} we briefly deal
with semidirect products with abelian kernel in general.

\medskip
Throughout this article, let $p$ be a prime number, and let
$F$ be a fixed algebraically closed field of characteristic $p$.
All groups appearing will be finite and, whenever $G$ is a 
group, any $FG$-module is understood to be a finitely generated
left module. 
Hence we may assume that the groups considered here
form a small category, that is, its object class is just a set,
and similarly module categories may be assumed to be small as well.
This will, for instance, allow us to speak of the {\it set} of all finite groups.
We assume the reader to be familiar with  
modular representation theory of finite groups in general, and
the standard notation commonly used, as exposed for example
in \cite{NT} and \cite{Thev}.
For background concerning the representation theory
of the symmetric groups and their covering groups, 
we refer the reader to \cite{JK} and \cite{HH}, respectively.

\medskip
{\bf Acknowledgements.} The authors' research
was supported through a Marie Curie Intra-European Fellowship
(grant PIEF-GA-2008-219543). In particular the second-named
author is grateful for the hospitality of the University of
Oxford, where part of the paper has been written.
It is a pleasure to thank Burkhard K\"ulshammer
for various helpful discussions on the topic.
We would also like to thank the referee for pointing
us to Theorem \ref{thm:semidirect}.
\end{noth}

\section{Interior Algebras, Source Algebras, and Vertex-Source Pairs}
\label{sec:vsp}

In this section we introduce a category-theoretic language,
which will be used to restate Feit's and Puig's Conjectures 
later in this article. The language we use
is that of interior algebras, see for example \cite[Ch. 10]{Thev},
where we additionally have to allow the group acting to vary. 


\begin{noth}\label{noth:A}\normalfont
{\sl A category of interior algebras.}\; We define a category $\mathcal{A}$ 
whose objects are the triples 
$(G,\alpha,A)$, where $G$ is a finite group, 
$A$ is a finite-dimensional, associative, unitary $F$-algebra, 
and $\alpha:G\longrightarrow A^{\times}$ is a group homomorphism
into the group of multiplicative units $A^{\times}$ in $A$.
Given objects $(G,\alpha,A)$ and $(H,\beta,B)$ in $\mathcal{A}$, the
morphisms $(G,\alpha,A)\longrightarrow (H,\beta,B)$ are the pairs
$(\varphi,\Phi)$ where $\varphi:G\longrightarrow H$ is a group homomorphism
and $\Phi:A\longrightarrow B$ is a homomorphism of $F$-algebras satisfying
\begin{equation}\label{equ:Phi}
\Phi(\alpha(g)a)=\beta(\varphi(g))\Phi(a)\quad\text{and}\quad 
\Phi(a\alpha(g))=\Phi(a)\beta(\varphi(g)),
\end{equation}
for all $g\in G$ and all $a\in A$. 
We emphasize that
the algebra homomorphism $\Phi$ need not be unitary, in general.
If it additionally is, that is, if we have $\Phi(1_A)=1_B$ then 
the above compatibility condition (\ref{equ:Phi}) simplifies to
$$\Phi(\alpha(g))=\beta(\varphi(g)), \quad\text{for all }g\in G .$$

\medskip
Anyway, whenever $(\varphi,\Phi):(G,\alpha,A)\longrightarrow (H,\beta,B)$
and $(\psi,\Psi):(H,\beta,B)\longrightarrow (K,\gamma,C)$ are morphisms in 
$\mathcal{A}$, 
their composition is defined to be 
$(\psi,\Psi)\circ(\varphi,\Phi):=(\psi\circ\varphi,\Psi\circ\Phi)$,
where the compositions of the respective components are the 
usual compositions of group homomorphisms and algebra homomorphisms,
respectively.
Hence $\mathcal{A}$ is indeed a category, from now on called
the \textit{category of interior algebras};
an object $(G,\alpha,A)$ in $\mathcal{A}$ is called an 
\textit{interior $G$-algebra}, and 
$(\varphi,\Phi):(G,\alpha,A)\longrightarrow (H,\beta,B)$
is called a \textit{morphism of interior algebras}.

\medskip
We just remark that for the conjugation automorphisms 
$\kappa_a\in\Aut(A)$ and $\lambda_b\in\Aut(B)$ induced by some 
$a\in (A^{\alpha(G)})^{\times}$ and $b\in (B^{\beta(H)})^{\times}$,
respectively, we also have the morphism 
$(\varphi,\lambda_b\circ\Phi\circ\kappa_a):
 (G,\alpha,A)\longrightarrow (H,\beta,B)$. This defines
an equivalence relation on the morphisms of interior algebras
$(G,\alpha,A)\longrightarrow (H,\beta,B)$, and the equivalence class
$$ (\varphi,\widehat{\Phi}):=\{(\varphi,\lambda_b\circ\Phi\circ\kappa_a)\mid
   a\in(A^{\alpha(G)})^{\times},b\in(B^{\beta(H)})^{\times}\} $$
is called the associated \textit{exomorphism of interior algebras}.

\medskip
By the above definition, $(G,\alpha,A)$ and $(H,\beta,B)$ are isomorphic in 
$\mathcal{A}$ if and only if there exists a morphism 
$(\varphi,\Phi):(G,\alpha,A)\longrightarrow (H,\beta,B)$
such that $\varphi$ is an isomorphism of groups and 
$\Phi$ is an (automatically unitary) isomorphism of algebras.
So, in particular, if $(G,\alpha,A)$ is an interior algebra and 
$\varphi:H\longrightarrow G$ is an isomorphism of groups 
then also $(H,\alpha\circ\varphi,A)$ is an interior algebra, and 
$(H,\alpha\circ\varphi,A)$ and $(G,\alpha,A)$ are isomorphic via 
$(\varphi,\mathrm{id}_A)$.
Analogously, if $(G,\alpha,A)$ is an interior algebra and if 
$\Phi:A\longrightarrow B$ is an
isomorphism of algebras then $(G,\Phi\circ\alpha,B)$ is also
an interior algebra, and $(G,\alpha,A)$ and $(G,\Phi\circ\alpha,B)$ 
are isomorphic via $(\mathrm{id}_G,\Phi)$.
\end{noth}

\begin{noth}\label{noth:pairs}
\normalfont
{\sl An equivalence relation.}\; Let $G$ and $H$ be groups, 
let $M$ be an $FG$-module, and
let $N$ be an $FH$-module. 
Let further $\alpha:G\longrightarrow E_M^{\times}$
and $\beta:H\longrightarrow E_N^{\times}$ 
be the corresponding representations,
where $E_M:=\End_F(M)$ and $E_N:=\End_F(N)$. 
Then $(G,\alpha,E_M)$ and $(H,\beta,E_N)$ are interior algebras. 

\medskip
(a)\,
We say that the pairs $(G,M)$ and $(H,N)$ are \textit{equivalent} 
if there are a group isomorphism $\varphi:G\longrightarrow H$ 
and a vector space isomorphism $\psi:M\longrightarrow N$
such that, for all $g\in G$ and all $m\in M$, we have
$$ \psi(\alpha(g)\cdot m)=\beta(\varphi(g))\cdot\psi(m).$$
This clearly is an equivalence relation on the set of all 
such pairs.

\medskip
(b)\,
The case $G=H$ deserves particular attention:
pairs $(G,M)$ and $(G,N)$ are equivalent, via $(\varphi,\psi)$ say,
if and only if we have
$$ \beta(\varphi(g))\cdot n=\psi(\alpha(g)\cdot\psi^{-1}(n)) $$
for all $g\in G$ and $n\in N$,
that is, if and only if $M$ and $N$ are in the same 
$\Aut(G)$-orbit on the set of isomorphism classes of $FG$-modules.
Moreover, $M$ and $N$ are isomorphic as $FG$-modules if and only if $\varphi$
can be chosen to be the identity $\textrm{id}_G$.
In particular, there are at most $|\Aut(G)|$ isomorphism classes of
$FG$-modules in the equivalence class of $(G,M)$.
\end{noth}

\begin{lemma}\label{lemma:pairs}
We keep the notation of \ref{noth:pairs}. 
Then the pairs $(G,M)$ and $(H,N)$ are equivalent if and only if
the associated interior algebras $(G,\alpha,E_M)$ and $(H,\beta,E_N)$ 
are isomorphic in $\mathcal{A}$. 

\smallskip
Moreover, if $G=H$ then $M$ and $N$ are isomorphic as $FG$-modules 
if and only if 
$(G,\alpha,E_M)$ and $(G,\beta,E_N)$ are isomorphic in $\mathcal{A}$ 
via an isomorphism of the form $(\mathrm{id}_G,?)$. 
\end{lemma}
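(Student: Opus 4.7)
The plan is to prove both directions of both equivalences by going back and forth between a vector space isomorphism $\psi\colon M\to N$ and the induced algebra isomorphism $\Phi\colon E_M\to E_N$ given by conjugation by $\psi$.

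For the \textbf{forward direction} of the first statement, suppose $(G,M)$ and $(H,N)$ are equivalent via $(\varphi,\psi)$. I would set $\Phi\colon E_M\to E_N$, $f\mapsto \psi\circ f\circ\psi^{-1}$; this is manifestly a unitary $F$-algebra isomorphism. The equivalence condition $\psi(\alpha(g)\cdot m)=\beta(\varphi(g))\cdot\psi(m)$ rewritten as an identity of endomorphisms reads $\psi\circ\alpha(g)\circ\psi^{-1}=\beta(\varphi(g))$, which is precisely $\Phi(\alpha(g))=\beta(\varphi(g))$. Since $\Phi$ is unitary, this is the simplified form of the compatibility condition in~(\ref{equ:Phi}), so $(\varphi,\Phi)$ is an isomorphism in $\mathcal{A}$.

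For the \textbf{backward direction}, suppose $(\varphi,\Phi)\colon(G,\alpha,E_M)\to(H,\beta,E_N)$ is an isomorphism in $\mathcal{A}$. The key technical point, which I see as the main (but mild) obstacle, is to recover a vector space isomorphism $\psi\colon M\to N$ from $\Phi$. Since $\Phi$ is an $F$-algebra isomorphism between the simple matrix algebras $E_M\cong M_{\dim M}(F)$ and $E_N\cong M_{\dim N}(F)$, we have $\dim M=\dim N$, and the Skolem--Noether theorem (or equivalently, the uniqueness up to isomorphism of the natural module of a matrix algebra) yields a vector space isomorphism $\psi\colon M\to N$ with $\Phi(f)=\psi\circ f\circ\psi^{-1}$ for all $f\in E_M$. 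Applying this to $f=\alpha(g)$ and using $\Phi(\alpha(g))=\beta(\varphi(g))$ gives $\psi(\alpha(g)\cdot m)=\beta(\varphi(g))\cdot\psi(m)$, so $(\varphi,\psi)$ is an equivalence of $(G,M)$ and $(H,N)$.

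For the \textbf{second statement}, I would specialize the above to $G=H$. If $M\cong N$ as $FG$-modules via $\psi$, then $(\mathrm{id}_G,\psi)$ is an equivalence, and the conjugation map $\Phi$ constructed in the forward direction yields an isomorphism $(\mathrm{id}_G,\Phi)$ in $\mathcal{A}$. Conversely, given an isomorphism of the form $(\mathrm{id}_G,\Phi)$, the Skolem--Noether argument produces $\psi\colon M\to N$ with $\psi(\alpha(g)\cdot m)=\beta(g)\cdot\psi(m)$, showing $\psi$ is an $FG$-module isomorphism. The only subtlety worth flagging is that $\psi$ is determined by $\Phi$ only up to a nonzero scalar, but any such choice produces an $FG$-module isomorphism, so this ambiguity is harmless.
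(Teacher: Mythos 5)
Your proof is correct and follows essentially the same route as the paper: in one direction conjugation by $\psi$ yields the (unitary) algebra isomorphism $\Phi$, and in the other direction one recovers a vector space isomorphism $\psi$ intertwining the two actions from the isomorphism $\Phi$ of the full matrix algebras $E_M$ and $E_N$. The only inessential difference is that where you invoke Skolem--Noether (equivalently, uniqueness of the natural module of a matrix algebra) to produce $\psi$, the paper constructs it explicitly as the restriction of $\Phi$ to a minimal left ideal $E_M i\cong M$ for a primitive idempotent $i$, with $N$ identified with $E_N\Phi(i)$.
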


\begin{proof}
If $(G,M)$ and $(H,N)$ are equivalent via
$\varphi:G\longrightarrow H$ and $\psi:M\longrightarrow N$
then 
$$ \Psi:E_M \longrightarrow E_N,\;
   \gamma\longmapsto \psi\circ\gamma\circ\psi^{-1} $$
is an isomorphism of algebras, and we have
$\Psi(\alpha(g))=\psi\circ\alpha(g)\circ\psi^{-1}=\beta(\varphi(g))$,
for all $g\in G$,
thus the interior algebras $(G,\alpha,E_M)$ and $(H,\beta,E_N)$ 
are isomorphic in $\mathcal{A}$ via $(\varphi,\Psi)$.

\medskip
Let, conversely, $(G,\alpha,E_M)$ and $(H,\beta,E_N)$ be isomorphic 
via $(\varphi,\Psi)$, where $\varphi:G\longrightarrow H$  
is a group isomorphism and $\Psi:E_M\longrightarrow E_N$
is an isomorphism of algebras. Then, letting $i\in E_M$ be a
primitive idempotent, 
we may assume that $M=E_M i$ and, letting $j:=\Psi(i)\in E_N$,
we may similarly assume that $N=E_N j$. Moreover, 
$\Psi(E_M i)=\Psi(E_M)\Psi(i)=E_N j$ shows that 
$\psi:=\Psi|_{E_M i}: E_M i\longrightarrow E_N j$
is a vector space isomorphism, where
for all $g\in G$ and $\gamma\in E_M$ we have
$$ \psi(\alpha(g)\cdot\gamma i)
= \Psi(\alpha(g)\cdot\gamma i)
= \beta(\varphi(g))\cdot\Psi(\gamma)j
= \beta(\varphi(g))\cdot\psi(\gamma i), $$
implying that $(G,M)$ and $(H,N)$ are equivalent via $(\varphi,\psi)$.
This proves the first statement.

\medskip
The second statement can be found in \cite[L. 10.7]{Thev}.
It also follows from the above observations, by recalling that
$M$ and $N$ are isomorphic $FG$-modules if and only if the
group isomorphism $\varphi:G\longrightarrow G$ inducing an
equivalence of pairs can be chosen 
to be the identity $\mathrm{id}_G$.
\end{proof}

\begin{lemma}\label{lemma:dirsummand}
We keep the notation of \ref{noth:pairs},
and let $(G,M)$ and $(H,N)$ be equivalent.
Then the equivalence classes of pairs $(G,M')$
where $M'$ is an indecomposable direct summand
of the $FG$-module $M$ coincide with the equivalence classes
of pairs $(H,N')$ where $N'$ is an indecomposable direct summand
of the $FH$-module $N$.
In particular, the $FG$-module $M$ is indecomposable 
if and only if the $FH$-module $N$ is.
\end{lemma}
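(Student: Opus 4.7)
The plan is to transport structure via any pair $(\varphi,\psi)$ realizing the equivalence, where $\varphi:G\longrightarrow H$ is a group isomorphism and $\psi:M\longrightarrow N$ is a vector-space isomorphism satisfying the compatibility condition from \ref{noth:pairs}(a). The key observation is that, because $\varphi$ is a bijection, a subspace $U\subseteq M$ is an $FG$-submodule if and only if $\psi(U)\subseteq N$ is an $FH$-submodule: indeed, for every $h\in H$ we have $\beta(h)\cdot\psi(U)=\psi(\alpha(\varphi^{-1}(h))\cdot U)$, so stability under $\alpha(G)$ transfers to stability under $\beta(H)$, and the converse is obtained symmetrically by applying the inverse equivalence $(\varphi^{-1},\psi^{-1})$.

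Next, a direct sum decomposition $M=\bigoplus_{i=1}^{r}M_i$ as $FG$-modules is transported by $\psi$ to a direct sum decomposition $N=\bigoplus_{i=1}^{r}\psi(M_i)$ as $FH$-modules, and for each $i$ the restriction $\psi|_{M_i}:M_i\longrightarrow\psi(M_i)$, together with $\varphi$, witnesses that the pairs $(G,M_i)$ and $(H,\psi(M_i))$ are equivalent in the sense of \ref{noth:pairs}(a). Applying this to a Krull--Schmidt decomposition of $M$ produces, summand by summand, equivalences between the pairs $(G,M_i)$ with $M_i$ an indecomposable direct summand of $M$ and pairs $(H,\psi(M_i))$ with $\psi(M_i)$ an indecomposable direct summand of $N$. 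The inverse equivalence $(\varphi^{-1},\psi^{-1})$ supplies the reverse assignment, and these two procedures together show that the asserted sets of equivalence classes of pairs coincide.

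The particular statement on indecomposability is now immediate: a nontrivial decomposition $N=N_1\oplus N_2$ as $FH$-modules pulls back, by the first paragraph, to the nontrivial decomposition $M=\psi^{-1}(N_1)\oplus\psi^{-1}(N_2)$ as $FG$-modules, and symmetrically. I do not anticipate a real obstacle here; the only point requiring a moment's care is confirming that the images (respectively pre-images) under $\psi$ of direct summands are genuine $FH$-submodules (respectively $FG$-submodules), which is precisely what the compatibility relation $\psi\circ\alpha(g)=\beta(\varphi(g))\circ\psi$ combined with the surjectivity of $\varphi$ delivers.
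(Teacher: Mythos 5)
Your proof is correct, and it takes a visibly different route from the paper's. The paper first invokes Lemma \ref{lemma:pairs} to replace the equivalence $(\varphi,\psi)$ by an isomorphism $(\varphi,\Psi)$ of the interior algebras $(G,\alpha,E_M)$ and $(H,\beta,E_N)$, and then transports structure at the level of endomorphism algebras: an indecomposable direct summand $M'=iM$ corresponds to a primitive idempotent $i\in (E_M)^{\alpha(G)}$, the image $j:=\Psi(i)$ is shown to lie in $(E_N)^{\beta(H)}$ and to be primitive, and the restriction $\Psi|_{iE_Mi}:iE_Mi\to jE_Nj$ is checked to be an isomorphism of the corner interior algebras, whence $(G,iM)$ and $(H,jN)$ are equivalent by Lemma \ref{lemma:pairs} again. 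You instead work directly with the vector-space isomorphism $\psi$, noting that the intertwining relation together with the surjectivity of $\varphi$ makes $\psi$ carry $FG$-submodules to $FH$-submodules, so that decompositions and their indecomposable summands transport termwise, each restriction $\psi|_{M'}$ witnessing the required equivalence of pairs. The two arguments are of course two faces of the same fact (the proof of Lemma \ref{lemma:pairs} exhibits $\Psi$ as conjugation by $\psi$, so your submodule transport and the paper's idempotent transport correspond under $i\mapsto \psi\circ i\circ\psi^{-1}$); yours is the more elementary and self-contained, while the paper's phrasing in terms of idempotents of interior algebras is chosen to match the formalism it reuses later for source algebras in \ref{noth:scealgvx}. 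The only point to make fully explicit in your write-up is the standard Krull--Schmidt remark that an arbitrary indecomposable direct summand of $M$ is $FG$-isomorphic to one occurring in your chosen decomposition (isomorphic modules giving equivalent pairs via $(\mathrm{id}_G,?)$), or alternatively just apply your transport argument to the two-term decomposition $M=M'\oplus M''$; either way the gap is cosmetic.
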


\begin{proof}
Let $(G,\alpha,E_M)$ and $(H,\beta,E_N)$ be isomorphic via $(\varphi,\Psi)$;
such an isomorphism exists, by Lemma \ref{lemma:pairs}.
Given an indecomposable direct summand $M'$ of $M$, 
let $i\in (E_M)^{\alpha(G)}$ 
be the associated primitive idempotent, so that $M'=iM$,
with associated representation 
$$ \alpha': G\longrightarrow E_{iM}=iE_Mi,\; g\longmapsto i\alpha(g)i .$$
Hence, for $j:=\Psi(i)\in E_N$ we have 
$$ \beta(\varphi(g))j=\beta(\varphi(g))\Psi(i)=\Psi(\alpha(g)i)
 =\Psi(i\alpha(g))=\Psi(i)\beta(\varphi(g))=j\beta(\varphi(g)) ,$$
for all $g\in G$. Thus $j\in (E_N)^{\beta(H)}$ is a primitive idempotent, 
giving rise to the indecomposable direct summand $jN$ of $N$,
with associated representation
$$ \beta': H\longrightarrow E_{jN}=jE_Nj,\; g\longmapsto j\beta(g)j .$$
Moreover, we have an isomorphism of algebras
$$ \Psi':=\Psi|_{iE_Mi}:iE_Mi\longrightarrow jE_Nj,\;
   ixi\longmapsto\Psi(ixi)=j\Psi(x)j .$$
Then we have
$$\Psi'(\alpha'(g))=\Psi'(i\alpha(g)i)=j\Psi(\alpha(g))j
=j\beta(\varphi(g))j=\beta'(\varphi(g)) ,$$
for all $g\in G$.
Thus the interior algebras $(G,\alpha',iE_Mi)$ and $(H,\beta',jE_Nj)$ 
are isomorphic in $\mathcal{A}$ via $(\varphi,\Psi')$,
that is, the pairs $(G,iM)$ and $(H,jN)$ are equivalent.
\end{proof}

\begin{rem}\label{rem:resalg}
\normalfont
(a)
Let $(G,\alpha,A)$ be an interior algebra.
For any $A$-module $M$ with associated representation
$\delta:A\longrightarrow E_M:=\End_F(M)$ we obtain an 
$FG$-module $\res_{\alpha}(M)$ by restriction along $\alpha$, 
that is, the associated representation is given as
$\delta\circ\alpha: G\longrightarrow E_M^\times$.
Thus we get a functor
$$ \res_{\alpha}:A\textbf{-mod} \longrightarrow FG\textbf{-mod},\;
   M \longmapsto \res_{\alpha}(M) .$$

\medskip
Let $(H,\beta,B)$ be an interior algebra, and let 
$(\varphi,\Phi):(G,\alpha,A)\longrightarrow (H,\beta,B)$
be a morphism in $\mathcal{A}$.
Hence, by restriction along $\beta$ and $\varphi$, respectively,
we similarly get functors
$$ \res_{\beta}:B\textbf{-mod} \longrightarrow FH\textbf{-mod}
\quad\text{and}\quad 
\res_{\varphi}:FH\textbf{-mod} \longrightarrow FG\textbf{-mod} .$$
Moreover, for any $B$-module $N$ with associated representation
$\gamma:B\longrightarrow E_N:=\End_F(N)$ we obtain an
$A$-module $\res_{\Phi}(N):=\Phi(1_A)N$ whose
associated representation is given as
$$ A\longrightarrow\End_F(\Phi(1_A)N)=\Phi(1_A)E_N\Phi(1_A),\;
x\longmapsto \Phi(1_A)\gamma(\Phi(x))\Phi(1_A).$$
This gives rise to a functor
$\res_{\Phi}:B\textbf{-mod} \longrightarrow A\textbf{-mod}$.

\medskip
(b)
If additionally $\Phi$ is unitary, that is, $\Phi(1_A)=1_B$
then the representation associated with $\res_{\Phi}(N)$
is obtained by restriction along $\Phi$. Moreover,
from $\Phi(\alpha(g))=\beta(\varphi(g))$,
for all $g\in G$, we infer that we have the following
equality of functors
$$ \res_{\alpha}\circ\res_{\Phi} = \res_{\varphi}\circ\res_{\beta}:
B\textbf{-mod} \longrightarrow FG\textbf{-mod} .$$
In other words, for any $B$-module $N$ with associated representation
$\gamma:B\longrightarrow E_N$, we have
$$ (\gamma\circ\Phi\circ\alpha)(g)\cdot n
=(\gamma\circ\beta\circ\varphi)(g)\cdot n ,$$
for all $g\in G$ and all $n\in N$.
In particular, if $\varphi$ is an isomorphism then 
$(G,\res_{\alpha}(\res_{\Phi}(N)))$ and $(H,\res_{\beta}(N))$ 
are equivalent via $(\varphi,\textrm{id}_N)$. 
\end{rem}

\begin{defi}\label{defi:vxsce}
\normalfont
Let $G$ be a group, and let $M$ be an indecomposable $FG$-module. 
Assume that $V\leq G$ is a vertex of $M$ and that $S$ is a $V$-source of $M$.
Then the elements of the equivalence class of the pair $(V,S)$ are
called the \textit{vertex-source pairs} of $(G,M)$.
\end{defi}

\begin{prop}\label{prop:vxsce}
If $G$ is a group and $M$ is an indecomposable $FG$-module
then the vertex-source pairs of $(G,M)$ are pairwise equivalent.

\smallskip
Moreover, if $H$ is a group and $N$ is an indecomposable
$FH$-module such that $(G,M)$ is equivalent to $(H,N)$
then the vertex-source pairs of $(G,M)$ and $(H,N)$ are pairwise equivalent.
\end{prop}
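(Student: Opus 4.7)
The plan for both statements is to exploit naturality of the vertex-source construction under group isomorphisms. Throughout, I use the following standard convention: for $g \in G$ and an $FV$-module $S$, let ${}^gS$ denote the $F({}^gV)$-module (where ${}^gV := gVg^{-1}$) having the same underlying vector space as $S$ and action $({}^gv)\cdot s := vs$; the $V$-sources of $M$ for a fixed vertex $V$ form a single $N_G(V)$-orbit, and vertices of $M$ form a single $G$-conjugacy class, by Green's theorem.

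For the first statement, let $(V,S)$ and $(V',S')$ be two vertex-source pairs of $(G,M)$. By Green's theorem, after replacing $(V',S')$ by a $G$-conjugate one has $V' = {}^gV$ for some $g \in G$, and $S'$ lies in the $N_G(V')$-orbit of ${}^gS$, so $S' \cong {}^n({}^gS)$ as $FV'$-modules for some $n \in N_G(V')$. The pair $(c_g, \mathrm{id}_S)$, where $c_g: V \to V'$ is conjugation by $g$, satisfies the compatibility of \ref{noth:pairs}(a) directly by the construction of ${}^gS$, yielding $(V,S) \sim (V',{}^gS)$. Applying the same argument with $c_n$ in place of $c_g$ (absorbing the subsequent $FV'$-isomorphism) gives $(V',{}^gS) \sim (V',S')$, and composition of these equivalences finishes the first assertion.

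For the second statement, let $(\varphi, \psi)$ provide the equivalence $(G,M) \sim (H,N)$ with $\varphi: G \to H$ an isomorphism, and let $(V,S)$ be a vertex-source pair of $(G,M)$. Set $V'' := \varphi(V) \leq H$ and let $S''$ be the $FV''$-module with underlying space $S$ and action $\varphi(v) \cdot s := vs$; then $(\varphi|_V, \mathrm{id}_S)$ exhibits $(V,S) \sim (V'',S'')$ immediately from the definitions. It remains to verify that $(V'',S'')$ is a vertex-source pair of $(H,N)$. Since $\varphi$ is a group isomorphism, the functor $\res_\varphi : FH\textbf{-mod} \to FG\textbf{-mod}$ is an equivalence of categories with quasi-inverse $\res_{\varphi^{-1}}$, and by assumption $\res_\varphi(N) \cong M$ as $FG$-modules. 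Moreover $\res_{\varphi|_V}(S'') = S$ by construction, so $\res_\varphi(\ind_{V''}^H(S'')) \cong \ind_V^G(S)$, whence $N \mid \ind_{V''}^H(S'')$; and the minimality of $V$ as a subgroup of $G$ relative to $M$ transports via the bijection of subgroup lattices induced by $\varphi$ to minimality of $V''$ for $N$. Applying the first statement to $(H,N)$ completes the proof.

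The only subtlety is the clerical one of fixing conventions for the conjugate action and for the action on $S''$ in such a way that the identity maps on the underlying vector space really do intertwine the two $V$- or $V''$-actions; once this is done, the formal equivalences in \ref{noth:pairs}(a) hold on the nose and no genuine obstacle arises.
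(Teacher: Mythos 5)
Your proof is correct and follows essentially the same route as the paper: for the first assertion, both arguments use Green's theorem to reduce to pairs of the form $({}^gV,{}^{n}({}^gS))$ and exhibit the equivalence via the conjugation homomorphism (the paper realizes the conjugate source as $hg\otimes S$ inside the induced module with intertwiner $m\mapsto hg\otimes m$, whereas you twist the action on the same underlying space and use the identity map — these are isomorphic realizations). For the second assertion the paper simply transports $(V,S)$ along $(\varphi|_V,\psi|_S)$ and notes that $\varphi(V)$ is a vertex of $N$ with source $\psi(S)$; your extra verification via the equivalence $\res_\varphi$ is a harmless elaboration of the same point.
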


\begin{proof}
Let $V\leq G$ be a vertex, and let $S$ be a $V$-source of $M$.
Then the set of all vertices of $M$ is given as $\{{}^gV\mid g\in G\}$
and, for a given $g\in G$, the set of ${}^gV$-sources 
of $M$ (up to isomorphism) is 
$\{{}^{hg}S\mid h\in N_G({}^gV)\}$. For $g\in G$ and $h\in N_G({}^gV)$, let 
$$ \kappa: V\longrightarrow {}^gV={}^{hg}V,\;
   x\longmapsto {}^{hg}x=hgxg^{-1}h^{-1} $$
be the associated conjugation homomorphism, and let 
$\psi: S\longrightarrow {}^{hg}S,\; m\longmapsto hg\otimes m$.
Then for all $x\in V$ and $m\in S$, we have 
$$ \kappa(x)\cdot\psi(m)=({}^{hg}x)\cdot(hg\otimes m)
  =hg\otimes (x\cdot m)=\psi(x\cdot m) ,$$
hence the pairs $(V,S)$ and $({}^gV,{}^{hg}S)$ are equivalent
via $(\kappa,\psi)$. 
Since every vertex-source pair of $(G,M)$ is equivalent
to one of the pairs $({}^gV,{}^{hg}S)$, 
this shows that all vertex-source pairs
of $(G,M)$ belong to the same equivalence class.

\medskip
Moreover, if $(G,M)$ and $(H,N)$ are equivalent via $(\varphi,\psi)$ then
$\psi(\alpha(g)\cdot m)=\beta(\varphi(g))\cdot\psi(m)$
for all $g\in G$ and $m\in M$, where $\alpha$ and $\beta$ are
the representations associated with $M$ and $N$, respectively. From this
we infer that $\varphi(V)$ is a vertex of the $FH$-module $N$ having
$\psi(S)$ as a $\varphi(V)$-source, and that
$(V,S)$ is equivalent to $(\varphi(V),\psi(S))$ via $(\varphi|_V,\psi|_S)$.
\end{proof}

\begin{rem}\label{rem:non-vertex-source-pairs}
\normalfont
We remark that, given $G$ and an indecomposable $FG$-module $M$,
specifying a vertex $V$ as a subgroup of $G$ amounts to 
restricting to those vertex-source pairs of shape $(V,?)$, henceforth
only allowing for isomorphisms of the form $(\mathrm{id}_V,?)$.
The above argument now shows that these vertex-source pairs
are given by the $FV$-modules $\{{}^h S\mid h\in N_G(V)/VC_G(V)\}$,
where $S$ is one of the $V$-sources. Thus we 
possibly do not obtain the full $\Aut(V)$-orbit of $S$,
but only see its orbit under $N_G(V)/VC_G(V)\leq\Aut(V)$,
as the following example shows:
\end{rem}

\begin{expl}\label{expl:non-vertex-source-pairs}
\normalfont
Let $p:=2$, let $G:=\mathfrak{S}_6$,
and let $M:=D^{(5,1)}$ be the natural simple $F\mathfrak{S}_6$-module
of $F$-dimension $4$. Then, by \cite{MueZim}, the vertices of $D^{(5,1)}$
are the Sylow $2$-subgroups of $\mathfrak{S}_6$.
Let $P_6:=P_4\times P_2\cong D_8\times C_2$, where $P_4=\langle (1,2),(1,3)(2,4)\rangle$ 
and $P_2:=\langle (5,6)\rangle$. Then $P_6$ is a Sylow $2$-subgroup
of $\mathfrak{S}_6$
and, by \cite{MueZim}, the restriction
$S:=\res_{P_6}^{\mathfrak{S}_6}(D^{(5,1)})$ is indecomposable, thus
every $P_6$-source of $D^{(5,1)}$ is isomorphic to $S$. 
Since $N_{\mathfrak{S}_6}(P_6)=P_6$, 
in view of Proposition \ref{prop:vxsce} we have to show that
the $\Aut(P_6)$-orbit of $S$ consists of more than a single
isomorphism class of $FP_6$-modules.

\medskip
Let $\varphi\in\Aut(P_6)$ be the involutory automorphism given 
by fixing $P_4=\langle (1,2),(1,3)(2,4)\rangle$ and mapping
$(5,6)$ to $(1,2)(3,4)(5,6)$. 
Since $\res_{\mathfrak{S}_4}^{\mathfrak{S}_6}(D^{(5,1)})$
is the natural permutation $F\mathfrak{S}_4$-module,
there is an $F$-basis of $S$ with respect to which the 
elements of $P_4$ are mapped to the associated permutation matrices, while 
$$
(5,6)\longmapsto 
\begin{bmatrix}
.&1&1&1 \\
1&.&1&1 \\
1&1&.&1 \\
1&1&1&. \\
\end{bmatrix}\quad\text{and}\quad
\varphi((5,6))=(1,2)(3,4)(5,6)\longmapsto 
\begin{bmatrix}
1&1&.&1 \\
1&1&1&. \\
.&1&1&1 \\
1&.&1&1 \\
\end{bmatrix}.
$$
It can be checked, for example with the help of the  
computer algebra system {\sf MAGMA} \cite{MAGMA},
that the $FP_6$-modules $S$ and ${}^\varphi S$ are not isomorphic.
\end{expl}


\begin{noth}\label{noth:scealg}
\normalfont
{\sl Source algebras.}\;
Let $G$ be a group, and let $B$ be a block of $FG$. 
Let further $P$ be a $p$-group
such that the defect groups of $B$ are isomorphic to $P$. 
Then we have an embedding of groups
$f:P\longrightarrow G$
such that $f(P)$ is a defect group of $B$.
The block $B$ is an indecomposable $F[G\times G]$-module 
with vertex $\Delta f(P)$ and trivial source. 
Moreover, there is an indecomposable direct summand $M$ of
$\res_{f(P)\times G}^{G\times G}(B)$ with vertex $\Delta f(P)$, 
where $M$ is unique up to isomorphism and conjugation in $N_G(f(P))$.

\medskip
So there is a 
primitive idempotent $i\in B^{f(P)}$ such that $M=iB$,
where $i$ is unique up to taking associates in $B^{f(P)}$
and conjugates under the action of $N_G(f(P))$.
We call $i$ and $M$, respectively, 
a \textit{source idempotent} and a \textit{source module} of
$B$, respectively; as a general reference see \cite[Ch. 38]{Thev}.
The embedding $f$ gives rise to the group homomorphism
$$\alpha_{f,i}:P\longrightarrow (iBi)^{\times},\; g\longmapsto if(g)i ,$$
which turns $(P,\alpha_{f,i},iBi)$ into an interior $P$-algebra.
Note that $\alpha_{f,i}$ is injective, by \cite[Exc. 38.2]{Thev}.
We call an interior algebra that is isomorphic to $(P,\alpha_{f,i},iBi)$
in $\mathcal{A}$ a \textit{source algebra} of $B$.
\end{noth}

\begin{prop}\label{prop:scealg}
Let $G$ be a group, and let $B$ be a block of $FG$. 
Then the source algebras of
$B$ are pairwise isomorphic in $\mathcal{A}$. 

\smallskip
Moreover, if $\psi:G\longrightarrow G'$ is a group isomorphism
and if $B':=\psi(B)$ is the block of $FG'$ obtained by
extending $\psi$ to $FG$ then the source algebras of 
$B$ and $B'$ are pairwise isomorphic in $\mathcal{A}$.
\end{prop}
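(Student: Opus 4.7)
The plan is to reduce the first statement to verifying that each of the three types of choices implicit in \ref{noth:scealg} can be absorbed by an isomorphism in $\mathcal{A}$: the image $f(P)$ of the embedding (unique only up to $G$-conjugation), the embedding $f$ with a given image (unique only up to pre-composition by an element of $\Aut(P)$), and the source idempotent $i\in B^{f(P)}$ (unique only up to associates in $(B^{f(P)})^{\times}$ and up to $N_G(f(P))$-conjugation). Concretely, given two source data $(f_1,i_1)$ and $(f_2,i_2)$, I would transform $(f_2,i_2)$ into $(f_1,i_1)$ through a chain of four elementary modifications, each producing an explicit isomorphism in $\mathcal{A}$, and then compose.

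The chain will proceed as follows. First, pick $g\in G$ with $gf_2(P)g^{-1}=f_1(P)$; then $i_2':=gi_2g^{-1}$ is a source idempotent in $B^{f_1(P)}$, and conjugation by $g$ restricts to a unitary algebra isomorphism $\Phi_g:i_2Bi_2\longrightarrow i_2'Bi_2'$ that yields $(\mathrm{id}_P,\Phi_g)$ as an isomorphism in $\mathcal{A}$ from $(P,\alpha_{f_2,i_2},i_2Bi_2)$ to $(P,\alpha_{c_g\circ f_2,\,i_2'},i_2'Bi_2')$. Second, since $f_1$ and $c_g\circ f_2$ share the image $f_1(P)$, the element $\sigma:=f_1^{-1}\circ c_g\circ f_2$ lies in $\Aut(P)$ and a short computation shows that $(\sigma,\mathrm{id})$ is an isomorphism in $\mathcal{A}$ from $(P,\alpha_{c_g\circ f_2,\,i_2'},i_2'Bi_2')$ to $(P,\alpha_{f_1,i_2'},i_2'Bi_2')$. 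Third and fourth, the uniqueness assertion of \ref{noth:scealg} lets one write $i_1=u(hi_2'h^{-1})u^{-1}$ for some $u\in(B^{f_1(P)})^{\times}$ and $h\in N_G(f_1(P))$; conjugation by $h$ gives an isomorphism $(\tau,\Phi_h)$ in $\mathcal{A}$, where $\tau\in\Aut(P)$ is defined by $f_1(\tau(p))=hf_1(p)h^{-1}$, and conjugation by $u$ (which centralises $f_1(P)$) gives an isomorphism $(\mathrm{id}_P,\Phi_u)$. The composition of these four morphisms is the desired isomorphism in $\mathcal{A}$.

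For the second statement, the group isomorphism $\psi:G\longrightarrow G'$ extends to an $F$-algebra isomorphism $\widetilde{\psi}:FG\longrightarrow FG'$ carrying $B$ to $B'$, and hence defect groups, source idempotents, and source modules of $B$ to those of $B'$. Given a source datum $(f,i)$ for $B$, the pair $(\psi\circ f,\widetilde{\psi}(i))$ is therefore a source datum for $B'$, and the restriction $\widetilde{\psi}|_{iBi}$ is a unitary algebra isomorphism onto $\widetilde{\psi}(i)B'\widetilde{\psi}(i)$ satisfying $\widetilde{\psi}(\alpha_{f,i}(p))=\alpha_{\psi\circ f,\,\widetilde{\psi}(i)}(p)$ for all $p\in P$. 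Thus $(\mathrm{id}_P,\widetilde{\psi}|_{iBi})$ is an isomorphism in $\mathcal{A}$ between the corresponding source algebras of $B$ and $B'$; combined with the first statement applied to $B'$, this yields the claim.

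None of the individual verifications is hard: each reduces to an instance of $\Phi(\alpha(p))=\beta(\varphi(p))$ following immediately from~(\ref{equ:Phi}) in its simplified unitary form. The only conceptual observation worth emphasising is that allowing a nontrivial group-homomorphism component $\varphi\in\Aut(P)$ in morphisms of $\mathcal{A}$ is precisely what absorbs the $\Aut(P)$-ambiguity inherent in the source-algebra construction, both via non-canonical choices of embedding and via $N_G(f(P))$-conjugation of the source idempotent.
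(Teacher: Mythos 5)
Your proposal is correct and follows essentially the same strategy as the paper's proof: decompose the ambiguity in the source-algebra construction into elementary moves (conjugation in $G$ aligning defect groups, adjustment of the embedding by an automorphism of $P$, $N_G(f(P))$-conjugation and taking associates of the source idempotent), realize each as an explicit isomorphism in $\mathcal{A}$, and compose; the second statement is handled identically via the extended algebra isomorphism. The only cosmetic difference is the ordering and grouping of the elementary steps (the paper uses three steps and also explicitly allows the two source data to live over distinct but isomorphic abstract $p$-groups $Q$ and $P$, which your argument covers by the obvious transport of structure).
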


\begin{proof}
Let $P$ be a $p$-group isomorphic to the defect groups of $B$,
let $f:P\longrightarrow G$ be an embedding such that $f(P)\leq G$ is
a defect group of $B$, and let 
$\alpha_{f,i}:P\longrightarrow (iBi)^{\times},\; g\longmapsto if(g)i$
be the associated group 
homomorphism, where $i\in B^{f(P)}$ is a source idempotent.
Moreover, let $\varphi:Q\longrightarrow P$ be a group isomorphism, 
and let $f':Q\longrightarrow G$ be an embedding 
such that $f'(Q)\leq G$ is a defect group of $B$, with 
associated group homomorphism $\alpha_{f',j}:Q\longrightarrow (jBj)^{\times}$,
where $j\in B^{f'(Q)}$ is a source idempotent. 
Note that, hence, there is some $h\in G$ such that $f'(Q)={}^h f(P)$,
and the idempotents ${}^h i=hih^{-1}$ and $j$ are associate in $B^{f'(Q)}$.
To show that $(P,\alpha_{f,i},iBi)$ is isomorphic to $(Q,\alpha_{f',j},jBj)$ 
in $\mathcal{A}$ we proceed in three steps:

\medskip
(i) 
We first consider the particular case where $Q=P$, 
$\varphi=\textrm{id}$, and $f'=f$, and let $j\in B^{f(P)}$ 
be a source idempotent that is associate to $i$.
Then there is some $a\in (B^{f(P)})^{\times}$ such that $j={}^ai=aia^{-1}$, and 
$\kappa: iBi\longrightarrow jBj,\; x=ixi\longmapsto {}^a(ixi)=j({}^ax)j$
is an isomorphism of algebras.
Hence, we obtain the group homomorphism
$$ \kappa\circ\alpha_{f,i}: 
   P\longrightarrow (jBj)^{\times},\;
   g\longmapsto {}^a(if(g)i)=j(af(g)a^{-1})j=jf(g)j ,$$
that is, $\kappa\circ\alpha_{f,i}=\alpha_{f',j}$.
Moreover, 
$\kappa(\alpha_{f,i}(g))
={}^a(if(g)i)
=j{}f(g)j
=\alpha_{f',j}(g)$,
for all $g\in P$, 
shows that the interior $P$-algebras $(P,\alpha_{f,i},iBi)$ and 
$(P,\alpha_{f',j},jBj)$ are isomorphic via $(\textrm{id}_P,\kappa)$.

\medskip
(ii)
Next, let still $\varphi=\textrm{id}$, 
let $h\in G$ be arbitrary with associated conjugation automorphism
$G\longrightarrow G,\; g\longmapsto {}^h g=hgh^{-1}$,
and let $f':P\longrightarrow G,\; g\longmapsto hf(g)h^{-1}$ 
be the associated conjugated embedding. 
Since $j\in B^{f'(P)}$ is a source idempotent, by (i)
we may assume that $j={}^h i$.
This yields the isomorphism of algebras
$\gamma: iBi\longrightarrow jBj,\; x=ixi\longmapsto {}^h(ixi)=j({}^hx)j$
and, associated to $f'$, the group homomorphism
$$ \alpha_{f',j}=\gamma\circ\alpha_{f,i}: 
   P\longrightarrow (jBj)^{\times},\; 
   g\longmapsto {}^h(if(g)i)=j(hf(g)h^{-1})j .$$
Moreover, 
$\gamma(\alpha_{f,i}(g))
={}^h(if(g)i)
=j(hf(g)h^{-1})j
=\alpha_{f',j}(g)$,
for all $g\in P$,
shows that the interior $P$-algebras $(P,\alpha_{f,i},iBi)$ and 
$(P,\alpha_{f',j},jBj)$ are isomorphic via $(\textrm{id}_P,\gamma)$.

\medskip
(iii)
We finally consider the general case 
of a group isomorphism $\varphi:Q\longrightarrow P$
and an embedding $f':Q\longrightarrow G$ as above.
By (ii) we may assume that $f'(Q)=f(P)$. Hence, 
there is a group automorphism $\rho: f(P)\longrightarrow f(P)$
such that $f\circ\varphi=\rho\circ f'$. Thus,
replacing $\varphi$ by $\varphi':=(f^{-1}\circ\rho^{-1}\circ f)\circ\varphi$
we get $f\circ\varphi'=f'$. So we may assume that $f\circ\varphi=f'$.
Moreover, by (ii) we may assume that $j=i\in B^{f(P)}$.
Hence we have the associated group homomorphisms
$$ \alpha_{f,i}: P\longrightarrow (iBi)^{\times},\; g\longmapsto if(g)i
\quad\text{and}\quad
\alpha_{f',i}: Q\longrightarrow (iBi)^{\times},\;
       h\longmapsto if'(h)i=if(\varphi(h))i .$$
Moreover,
$\alpha_{f',i}(h)
=if(\varphi(h))i
=\alpha_{f,i}(\varphi(h))$,
for all $h\in Q$,
which shows that the interior algebras 
$(Q,\alpha_{f',i},iBi)$ and $(P,\alpha_{f,i},iBi)$  
are isomorphic via $(\varphi,\textrm{id}_{iBi})$.
This proves the first statement.

\medskip
Let $\psi:G\longrightarrow G'$ be a group isomorphism, which extends
to an $F$-algebra isomorphism $\psi:FG\longrightarrow FG'$,
and let $B':=\psi(B)$.
Then, letting $f':=\psi\circ f: P\longrightarrow G'$, we conclude that
$f'(P)$ is a defect group of $B'$, and $i':=\psi(i)\in (B')^{f'(P)}$
is a source idempotent of $B'$.
Thus we get $(P,\alpha_{f',i'},i'B'i')$ as a source algebra associated
with $B'$, where 
$$ \alpha_{f',i'}=\psi\circ\alpha_{f,i}:
   P\longrightarrow (i'B'i')^{\times}=\psi(iBi)^{\times},\;
   g\longmapsto i'f'(g)i'=\psi(if(g)i) .$$
Then we have 
$\psi(\alpha_{f,i}(g))=\alpha_{f',i'}(g)$,
for all $g\in P$,
that is,
$(P,\alpha_{f,i},iBi)$ and $(P,\alpha_{f',i'},i'B'i')$ are isomorphic in 
$\mathcal{A}$ via $(\textrm{id}_P,\psi|_{iBi})$.
Since, by what we have shown above, every source algebra of $B$ is
$\mathcal{A}$-isomorphic to $(P,\alpha_{f,i},iBi)$
and every source algebra of $B'$ is $\mathcal{A}$-isomorphic to
$(P,\alpha_{f',i'},i'B'i')$, this completes the proof of the proposition.
\end{proof}

\begin{rem}\label{rem:non-source-algebras}
\normalfont
We remark that specifying a defect group $P$ of the
block $B$ as a subgroup of $G$ amounts to keeping the embedding
$f:P\longrightarrow G$ fixed, and thus to
restricting to the source algebras of shape $(P,\alpha_{f,i},iBi)$,
for some $i\in B^{f(P)}$, and to isomorphisms of the form 
$(\mathrm{id}_P,\Phi)$. 
The above argument now shows that the isomorphisms
$\Phi$ realized in $G$ are precisely those of the form 
$\Phi=\gamma\circ\kappa$,
where $\kappa:iBi\longrightarrow iBi,\; x\longmapsto {}^a x$
is the inner automorphism of $iBi$ induced by some $a\in (iBi)^\times$,
and where $\gamma:iBi\longrightarrow jBj,\; x\longmapsto {}^h x$
is induced by the conjugation automorphism $G\longrightarrow G$
afforded by some $h\in N_G(f(P))$, where $j:={}^h i$.
Hence possibly not all elements
of the isomorphism class of $(P,\alpha_{f,i},iBi)$ are realized 
as source algebras in this strict sense,
as the following example shows:
\end{rem}

\begin{expl}\label{expl:non-source-algebras}
\normalfont
Let $p:=3$ and let $G=P=\langle z\rangle\cong C_3$ 
be the cyclic group of order $3$; hence $FP$ is a local $F$-algebra.
Letting $f=\textrm{id}_P:P\longrightarrow P$,
\textit{the} source algebra of $B=FP$ (by necessarily taking $i:=1_{FP}$)
is given as $(P,\alpha_{\textrm{id}_P},FP)$.
Thus $(P,\alpha_{\textrm{id}_P},FP)$ is the only interior algebra
in its isomorphism class that is actually realized in the 
above strict sense.

\medskip
We describe all interior algebras $(P,?,FP)$ isomorphic to
$(P,\alpha_{\textrm{id}_P},FP)$ in $\mathcal{A}$,
that is, all source algebras of $FP$ in the sense of 
\ref{noth:scealg}:
note first that in this particular case any group automorphism of $P$ 
can be extended uniquely to an algebra automorphism of $FP$, so that 
any isomorphism $(P,\alpha_{\textrm{id}_P},FP)\longrightarrow (P,?,FP)$ 
is of the form $(\mathrm{id}_P,\Phi)$, where $\Phi\in\Aut(FP)$ is
an algebra automorphism of $FP$.

\medskip
Letting $y:=1-z\in FP$, hence $y^2=1+z+z^2$,
the $F$-basis $\{1,y,y^2\}$ is adjusted to the radical series
$FP=J^0(FP)>J^1(FP)>J^2(FP)>J^3(FP)=\{0\}$ of $FP$, and it
can be checked, for example with the help of the
computer algebra system {\sf GAP} \cite{GAP},
that, with respect to this basis, we have 
$$ \Aut(FP)\cong\left\{\Phi_{a,b}:=\begin{bmatrix}
1&.&. \\ .&a&. \\ .&b&a^2 \\ \end{bmatrix}\in\text{GL}_3(F)\, \Big\arrowvert\,
a\in F^\times,b\in F \right\}. $$
Hence we have $\Phi_{a,b}(z)=(1-a-b)+(a-b)z-bz^2$;
in particular, we have $\textrm{id}_{FP}=\Phi_{1,0}$, 
and the non-trivial automorphism of $P$, mapping
$z\longmapsto z^2$, extends to $\Phi_{-1,-1}$.
Thus the interior algebras looked for are given as
$(P,\Phi_{a,b}\circ\alpha_{\textrm{id}_P},FP)$, where 
$(P,\alpha_{\textrm{id}_P},FP)=(P,\Phi_{1,0}\circ\alpha_{\textrm{id}_P},FP)$.

\medskip
Finally note that this does not encompass all possible embeddings
$P\longrightarrow (FP)^\times$:
since $-z-z^2\in (FP)^\times$ has order $3$, 
there is an embedding of groups
$\beta:P\longrightarrow (FP)^\times,\; z\longmapsto -z-z^2$,
which extends to the unitary algebra endomorphism $\Phi_{0,1}$ of $FP$,
which is not an automorphism. Anyway, this gives rise to the interior 
algebra $(P,\beta,FP)$,
which is not isomorphic to $(P,\alpha_{\textrm{id}_P},FP)$ in $\mathcal{A}$,
hence is not a source algebra of $FP$.
\end{expl}

\section{Reducing Feit's Conjecture to Puig's Conjecture}
\label{sec:feitpuig}

We have now prepared the language to state Feit's Conjecture 
on sources of simple modules over group algebras as well as
Puig's Conjecture on source algebras of blocks precisely.
We will then prove the reduction theorem relating these
conjectures, which we will use extensively throughout this paper.

\begin{noth}\label{noth:scealgvx}
\normalfont
{\sl Source algebras vs vertex-source pairs.}\; 
(a) The relation between source algebras,
in the sense of \ref{noth:scealg}, and vertex-source pairs, 
in the sense of Definition \ref{defi:vxsce}, is given as follows:
let $G$ be a group, let $B$ be a block of $FG$,
let $f:P\longrightarrow G$ be an embedding such that 
$f(P)\leq G$ is a defect group of $B$, and let 
$(P,\alpha_{f,i},iBi)$ be a source algebra of $B$.
Then, by \cite[Prop. 38.2]{Thev},
we have a Morita equivalence between the algebras $B$ and $iBi$, 
in the language of Remark \ref{rem:resalg}
given by the restriction functor 
$$ \res_\Psi:B\textbf{-mod} \longrightarrow iBi\textbf{-mod} $$
with respect to the natural embedding of algebras 
$\Psi:iBi\longrightarrow B$.

\medskip
Suppose that $M$ is an indecomposable 
$FG$-module belonging to the block $B$. 
Then the Morita correspondent of $M$ in $iBi$ is 
$\res_\Psi(M)=iM$. 
Moreover, restricting $iM$ along $\alpha_{f,i}$, we get an $FP$-module 
$\res_{\alpha_{f,i}}(iM)$, which is, in general, decomposable. 
By \cite[Prop. 38.3]{Thev}, the vertex-source pairs of $(G,M)$ 
are precisely the vertex-source pairs $(Q,?)$ of the 
indecomposable direct summands of the $FP$-module $\res_{\alpha_{f,i}}(iM)$
such that $|Q|$ is maximal.

\medskip
(b)
We show that proceeding like this to determine
the vertex-source pairs of $(G,M)$
is independent of the particular choice of a source algebra:
let $(D,\alpha,A)$ be any source algebra of $B$. Hence,
by Proposition \ref{prop:scealg}, there is an isomorphism
$(\varphi,\Phi):(D,\alpha,A)\longrightarrow(P,\alpha_{f,i},iBi)$
in $\mathcal{A}$. By Remark \ref{rem:resalg}, we have
an equivalence
$$ \res_{\Phi}:iBi\textbf{-mod} \longrightarrow A\textbf{-mod} .$$

\medskip
Letting $N:=\res_{\Phi}(iM)$,
we infer that the pairs $(P,\res_{\alpha_{f,i}}(iM))$ and
$(D,\res_{\alpha}(N))$ are equivalent.
Hence, by Lemma \ref{lemma:dirsummand},
the equivalence classes of pairs $(P,M')$ where 
$M'$ is a direct summand of the $FP$-module $iM$
coincide with the equivalence classes of pairs $(D,N')$
where $N'$ is a direct summand of the $FD$-module $N$.
Moreover, if the pairs $(P,M')$ and $(D,N')$ are equivalent
then, by Proposition \ref{prop:vxsce}, their vertex-source
pairs are pairwise equivalent.

\medskip
In conclusion, to find the vertex-source pairs of $(G,M)$,
we may go over from $(P,\alpha_{f,i},iBi)$ to an arbitrary source algebra 
$(D,\alpha,A)$ by considering the module $N$ instead,
and check the above maximality condition by varying over
the pairs $(D,N')$.
\end{noth}

\begin{defi}\label{defi:vg}
\normalfont
Let $\mathcal{G}$ be a set 
of groups, and let $Q$ be a $p$-group.

\medskip
(a)
We define $\mathcal{V}_{\mathcal{G}}(Q)$ to be
the set of all equivalence classes of pairs $(Q,L)$
where $L$ is an indecomposable $FQ$-module
such that $(Q,L)$ is a vertex-source pair of some
pair $(G,M)$, where $G$ is a group in $\mathcal{G}$ 
and $M$ is a \textit{simple} $FG$-module.
In the case that $\mathcal{G}$ is the set 
of all (finite) groups,
we also write $\mathcal{V}(Q)$ rather than $\mathcal{V}_{\mathcal{G}}(Q)$.

\medskip
(b)
We say that $\mathcal{G}$ has the
\textit{vertex-bounded-defect property} with respect to $Q$ if
there is an integer $c_{\mathcal{G}}(Q)$ such that,
for every pair $(Q,L)$ in $\mathcal{V}_{\mathcal{G}}(Q)$
and for every pair $(G,M)$
consisting of a group $G$ in $\mathcal{G}$ and a {\it simple} $FG$-module $M$
having $(Q,L)$ as a vertex-source pair, 
$M$ belongs to a block of $FG$ 
having defect groups of order at most $c_{\mathcal{G}}(Q)$.
\end{defi}

\begin{rem}\label{rem:vertexbnd}
\normalfont
The vertex-bounded-defect property, by \cite{Erd},
holds in the case where $Q$ is cyclic, with $c(Q)=|Q|$, including the case $Q=\{1\}$,
covering all blocks of finite representation type.
But it does indeed not hold in general, where, in particular,
in the realm of blocks of tame representation type
there are prominent counterexamples:

\medskip
Let $p=2$. For the groups 
$\{\textrm{PSL}_2(q)\mid q\equiv 1\pmod{4}\}$,
the Sylow $2$-subgroups are isomorphic
to the dihedral group $D_{(q-1)_2}$, where $(q-1)_2$ denotes the 
$2$-part of $q-1$. Also, there is a simple $F[\textrm{PSL}_2(q)]$-module
in the principal block having dimension $(q-1)/2$ and whose vertices,
by \cite{E}, are isomorphic to the Klein four-group $V_4\cong C_2\times C_2$.
Moreover, for the groups 
$\{\textrm{SL}_2(q)\mid q\equiv 1\pmod{4}\}$,
consisting of the 
universal covering groups of groups above, 
the Sylow $2$-subgroups are isomorphic 
to the generalized quaternion group $\mathfrak{Q}_{2(q-1)_2}$,
and the inflations of the above simple $F[\textrm{PSL}_2(q)]$-modules
to $F[\textrm{SL}_2(q)]$ have vertices isomorphic to the 
quaternion group $\mathfrak{Q}_8$.
Finally, for the groups 
$\{\textrm{GU}_2(q)\mid q\equiv 1\pmod{4}\}$
the Sylow $2$-subgroups are isomorphic to the semidihedral group 
$\textrm{SD}_{4(q-1)_2}$, and the identification 
$\textrm{SL}_2(q)\cong\textrm{SU}_2(q)$ shows that there is a simple 
$F[\textrm{GU}_2(q)]$-module in the principal block having dimension 
$q-1$ whose vertices are isomorphic to $V_4$.
(Alternatively, for the groups 
$\{\textrm{PSL}_3(q)\mid q\equiv 3\pmod{4}\}$, 
the Sylow $2$-subgroups are isomorphic to the semidihedral group 
$\textrm{SD}_{2(q+1)_2}$, and there is a simple 
$F[\textrm{PSL}_3(q)]$-module in the principal block having 
dimension $q(q+1)$ whose vertices, by \cite{EE}, are isomorphic to $V_4$.)

\medskip

\mbox{}From these cases we also obtain blocks of wild 
representation type violating the vertex-bounded-defect property,
for example by taking direct products.
Hence the question arises for which defect groups $P$ or 
groups $\mathcal{G}$ one might expect the vertex-bounded-defect 
property to hold.
In particular, the following is in \cite{Zh} attributed to Puig:
\end{rem}

\begin{ques}
If $p$ is odd, does then $\mathcal{G}$
always have the vertex-bounded-defect property with respect to $Q$?
\end{ques}

\medskip
We can now state Feit's and Puig's Conjectures, 
and prove the reduction theorem.

\begin{conj}[Feit \cite{Feit1}]\label{conj:feit}
Let $\mathcal{G}$ be a set 
of groups
(which might, in particular, be the set 
of all groups), 
let $Q$ be a $p$-group, and let $\mathcal{V}_{\mathcal{G}}(Q)$
denote the set of equivalence classes of vertex-source 
pairs introduced in Definition \ref{defi:vg}.
Then $\mathcal{V}_{\mathcal{G}}(Q)$ is finite.
\end{conj}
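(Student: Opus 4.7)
The plan is to reduce this finiteness statement to the (currently open) Puig's Conjecture together with a positive answer to Puig's Question on vertex-bounded defect, exploiting the bridge between source algebras and vertex-source pairs established in \ref{noth:scealgvx}. In other words, I would not attack Feit's Conjecture head-on, but prove a \emph{reduction theorem} of the form: if $\mathcal{G}$ has the vertex-bounded-defect property with respect to $Q$ and Puig's Conjecture holds for the defect groups arising, then $\mathcal{V}_{\mathcal{G}}(Q)$ is finite.

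To set up the reduction, fix a pair $(Q,L) \in \mathcal{V}_{\mathcal{G}}(Q)$ coming from a simple $FG$-module $M$ with $G \in \mathcal{G}$, let $B$ be the block of $FG$ containing $M$, and pick a defect group $D$ of $B$ containing (a conjugate of) $Q$. The first step is to control $|D|$: invoking the vertex-bounded-defect property, $|D|$ is bounded by $c_{\mathcal{G}}(Q)$, so $D$ ranges over only finitely many isomorphism classes of $p$-groups. The second step is to invoke Puig's Conjecture for each such $D$: the source algebra $(D,\alpha,A)$ of $B$ then lies in one of only finitely many isomorphism classes in $\mathcal{A}$. Fix a representative $(D,\alpha,A)$.

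The third step uses the Morita equivalence between $B$ and $A = iBi$ induced by a source idempotent $i$ (cf.~\ref{noth:scealgvx}): the simple $FG$-module $M$ corresponds to the simple $A$-module $N := iM$, and since $A$ is finite-dimensional there are only finitely many isomorphism classes of such $N$. The fourth step is the description of vertex-source pairs from \ref{noth:scealgvx}(a): the vertex-source pairs of $(G,M)$ are precisely those pairs $(Q,?)$ arising from indecomposable direct summands of the $FD$-module $\res_{\alpha}(N)$ of maximal vertex order. For fixed $(D,\alpha,A)$ and $N$, this is manifestly a finite set of equivalence classes, and by the independence statement in \ref{noth:scealgvx}(b) the answer does not depend on the chosen source algebra representative. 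Combining the four finitenesses yields $|\mathcal{V}_{\mathcal{G}}(Q)| < \infty$, as required.

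The main obstacle is that both inputs to this reduction are themselves deep and largely open: Puig's Conjecture is verified only in restricted settings, and the vertex-bounded-defect property is known to \emph{fail} for $p=2$ by the $\mathrm{PSL}_2(q)$-type examples collected in Remark \ref{rem:vertexbnd}, so the reduction cannot directly prove Feit's Conjecture in characteristic two. Even for $p>2$, the vertex-bounded-defect question (Puig's Question) remains open in general, with only a reduction to quasi-simple groups available. For this reason I would, as the paper does, abandon the general conjecture and instead establish both hypotheses \emph{directly} for the specific families $\mathfrak{S}_n, \mathfrak{A}_n, \widetilde{\mathfrak{S}}_n, \widehat{\mathfrak{S}}_n, \widetilde{\mathfrak{A}}_n, \mathfrak{B}_n, \mathfrak{D}_n$; Puig's Conjecture is available here through work of Kessar, Puig, and Scopes, so the remaining task is to produce explicit defect-group bounds, which is where self-centralizing Brauer pairs and Kn\"orr's Theorem enter the picture.
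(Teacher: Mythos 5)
Your proposal is correct and follows essentially the same route as the paper: the four-step reduction you describe is precisely the content of Theorem \ref{thm:feitred} (bounded defect groups via the vertex-bounded-defect property, finitely many source algebras via Puig's Conjecture, finitely many simples per source algebra, and \cite[Prop. 38.3]{Thev} to read off vertex-source pairs from restrictions along $\alpha$), and your concluding observation that the general statement remains open and must instead be verified family-by-family using Kessar--Puig--Scopes plus explicit defect bounds is exactly the strategy of Theorems \ref{thm:vtxbnd} and \ref{thm:summary}.
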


In consequence of Lemma \ref{lemma:pairs}, 
we can reformulate Feit's Conjecture 
equivalently also in the following way:

\begin{conj}\label{conj:feit'}
Let $\mathcal{G}$ be a set 
of groups, and let
$Q$ be a $p$-group. 
Then there are, up to isomorphism in $\mathcal{A}$, only finitely many
interior algebras $(Q,\alpha,E_L)$,
where $E_L=\End_F(L)$ for an indecomposable $FQ$-module $L$
with corresponding representation $\alpha:Q\longrightarrow E_L^{\times}$,
such that $(Q,L)$ is a vertex-source pair of some
pair $(G,M)$, where $G$ is a group in $\mathcal{G}$
and $M$ is a \textit{simple} $FG$-module.
\end{conj}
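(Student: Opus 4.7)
The plan is to deduce the equivalence of Conjecture \ref{conj:feit'} and Conjecture \ref{conj:feit} by a direct application of Lemma \ref{lemma:pairs} to the case $G=H=Q$. The key observation is that, specialized to pairs $(Q,L)$ and $(Q,L')$ of indecomposable $FQ$-modules over a fixed $p$-group $Q$, Lemma \ref{lemma:pairs} asserts that $(Q,L)$ and $(Q,L')$ are equivalent in the sense of \ref{noth:pairs} if and only if the associated interior $Q$-algebras $(Q,\alpha,E_L)$ and $(Q,\alpha',E_{L'})$ are isomorphic in $\mathcal{A}$. This converts the combinatorial data underlying $\mathcal{V}_{\mathcal{G}}(Q)$ into algebraic data of interior algebras.

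First I would define a map $\Theta$ sending the equivalence class of a pair $(Q,L)$, with $L$ indecomposable, to the $\mathcal{A}$-isomorphism class of $(Q,\alpha,E_L)$, where $\alpha:Q\longrightarrow E_L^{\times}$ is the representation afforded by $L$. The specialization of Lemma \ref{lemma:pairs} recalled above shows both that $\Theta$ is well-defined and that it is injective; by construction it is surjective onto the $\mathcal{A}$-isomorphism classes of interior algebras of the prescribed form $(Q,\alpha,E_L)$. Second, since the notion of vertex-source pair is formulated entirely in terms of equivalence classes of pairs (Definition \ref{defi:vxsce}, together with Proposition \ref{prop:vxsce}), the restriction of $\Theta$ to $\mathcal{V}_{\mathcal{G}}(Q)$ yields a bijection onto exactly the set of $\mathcal{A}$-isomorphism classes of interior algebras $(Q,\alpha,E_L)$ arising from some simple $FG$-module $M$ with $G\in\mathcal{G}$ that has $(Q,L)$ as a vertex-source pair. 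The equivalence of the two finiteness statements is then immediate.

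The argument is essentially bookkeeping around Lemma \ref{lemma:pairs}, so I expect no genuine obstacle. The only cosmetic point to be careful about is that Conjecture \ref{conj:feit'} restricts the shape of the underlying algebra to be $\End_F(L)$ for an indecomposable $FQ$-module $L$, rather than an arbitrary interior $Q$-algebra; if one forgets this restriction, the bijection with $\mathcal{V}_{\mathcal{G}}(Q)$ is lost. Once this is correctly stated, the reformulation follows without any further input.
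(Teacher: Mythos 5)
Your proposal is correct and matches the paper's approach: the paper derives Conjecture \ref{conj:feit'} from Conjecture \ref{conj:feit} precisely by invoking Lemma \ref{lemma:pairs} (in the case $G=H=Q$) to identify equivalence classes of pairs $(Q,L)$ with $\mathcal{A}$-isomorphism classes of the interior algebras $(Q,\alpha,E_L)$, exactly as your map $\Theta$ does. Your additional remarks on well-definedness via Definition \ref{defi:vxsce} and Proposition \ref{prop:vxsce} simply spell out the bookkeeping the paper leaves implicit.
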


\begin{conj}[Puig]\label{conj:puig}
Let $\mathcal{G}$ be a set  
of groups
(which might, in particular, be the set 
of all groups), 
and let $P$ be a $p$-group. 
Then there are only finitely many
$\mathcal{A}$-isomorphism classes of interior algebras of
$p$-blocks of groups in $\mathcal{G}$ whose 
defect groups are isomorphic to $P$.

\medskip
\normalfont
As for the origin of this conjecture, see \cite[Conj. 38.5]{Thev},
and the comment on \cite[p. 340]{Thev}.
\end{conj}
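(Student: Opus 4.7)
The plan is to attack Puig's Conjecture in two stages: a reduction to quasi-simple groups, followed by a case-by-case dimension bound using the Classification of Finite Simple Groups.

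First, I would carry out a reduction to the quasi-simple case along the lines developed for Donovan's Conjecture. Given a block $B$ of $FG$ with defect group $P$, standard Clifford-theoretic tools---Fong--Reynolds reductions, K\"ulshammer's results on central extensions, and external tensor product decompositions---should allow one to express the source algebra of $B$ in $\mathcal{A}$ in terms of source algebras of blocks of subgroups, quotients, and central extensions of a composition factor of $G$. Iterating, this produces for each block a quasi-simple group $H$ and a block of $FH$ whose source algebra determines that of $B$ up to $\mathcal{A}$-isomorphism, with a defect group that is a $p$-subgroup of order bounded in terms of $|P|$ only. The set of quasi-simple groups arising is still infinite, but the finite list of possible defect groups on the quasi-simple side is under control.

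Second, I would verify finiteness for each family of quasi-simple groups via CFSG. For alternating groups, their covers, and sporadic groups the statement is already known (Puig, Kessar, Scopes, as cited in the introduction). For groups of Lie type in defining characteristic, the defect group $P$ lies inside a Borel subgroup and the block structure is comparatively rigid, permitting a direct finiteness argument. For groups of Lie type in non-defining characteristic $\ell$, Bonnaf\'e--Rouquier Morita descent reduces the problem to quasi-isolated blocks, and Brou\'e's abelian defect conjecture (where available) together with the known description of unipotent blocks should permit a case-by-case treatment.

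The main obstacle, and indeed the reason Puig's Conjecture remains open in full generality, is a uniform bound on the dimension of the source algebra $iBi$ in terms of $|P|$ alone, as the rank and field size of the Lie-type groups grow. This is precisely Donovan's Conjecture for these families. Once such a bound $\dim_F(iBi) \leq N(|P|)$ is established, Puig's Conjecture follows immediately: over the algebraically closed field $F$ there are only finitely many $\mathcal{A}$-isomorphism classes of interior $P$-algebras of bounded $F$-dimension, because fixing the dimension $d$ pins down the algebra structure up to finitely many choices of structure constants, and fixing $\alpha: P \longrightarrow A^\times$ adds only finitely many further choices since $P$ is finite. I expect every step in the plan except this final dimension bound to go through relatively smoothly, and that is precisely where any honest attempt currently stalls.
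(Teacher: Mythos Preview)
The statement you are addressing is a \emph{conjecture}, not a theorem, and the paper does not attempt to prove it. Puig's Conjecture is stated precisely so that it can be \emph{invoked} in the Reduction Theorem~\ref{thm:feitred}; in the actual applications (Theorem~\ref{thm:summary}) the paper relies on the cases already established in the literature by Puig, Scopes, and Kessar for the specific families $\{\mathfrak{S}_n\}$, $\{\mathfrak{A}_n\}$, $\{\widetilde{\mathfrak{S}}_n\}$, $\{\mathfrak{B}_n\}$, $\{\mathfrak{D}_n\}$. There is therefore nothing in the paper to compare your proposal against: no proof of Conjecture~\ref{conj:puig} is offered, and none is needed for the paper's main results.

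Beyond that, your proposal contains a substantive error in its final step. You assert that a dimension bound $\dim_F(iBi)\leq N(|P|)$ would immediately yield finiteness of $\mathcal{A}$-isomorphism classes because ``fixing the dimension $d$ pins down the algebra structure up to finitely many choices of structure constants''. This is false over an infinite field: the structure constants of a $d$-dimensional $F$-algebra range over $F^{d^3}$, and for $d$ large there are genuinely infinite families of pairwise non-isomorphic $d$-dimensional associative $F$-algebras (already visible for commutative local algebras). Donovan's Conjecture bounds the number of \emph{Morita equivalence classes}, not the number of algebra isomorphism classes, and it is not known that Donovan implies Puig. So even granting every reduction you sketch, the passage from a dimension bound to finitely many interior-algebra isomorphism classes does not go through, and this is an additional gap on top of the one you yourself identify.
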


\begin{thm}\label{thm:feitred}
Let $\mathcal{G}$ be a set 
of groups satisfying
the vertex-bounded-defect property with respect to any
$p$-group. Suppose that Puig's Conjecture \ref{conj:puig}
holds true for $\mathcal{G}$. Then Feit's Conjecture \ref{conj:feit} 
is true for $\mathcal{G}$ as well.
\end{thm}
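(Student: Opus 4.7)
The plan is to combine three finiteness facts, following the chain
\[\text{simple }FG\text{-module }M\;\rightsquigarrow\;\text{block }B\ni M\;\rightsquigarrow\;\text{source algebra of }B\;\rightsquigarrow\;\text{vertex-source pairs of }(G,M).\]
Given the $p$-group $Q$, I want to bound $\mathcal{V}_\mathcal{G}(Q)$ by showing the set of source algebras relevant to $Q$ is finite, and that each such source algebra contributes only finitely many vertex-source pairs.

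First, I would apply the vertex-bounded-defect property to $Q$: for any simple $FG$-module $M$ with $G\in\mathcal{G}$ admitting $(Q,L)$ as a vertex-source pair, the defect groups of the block $B$ of $FG$ containing $M$ have order at most $c_\mathcal{G}(Q)$. Since there are only finitely many isomorphism types of $p$-groups of bounded order, only finitely many isomorphism types of defect group $P$ occur among such blocks $B$.

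Second, for each of these finitely many $P$, Puig's Conjecture \ref{conj:puig}, assumed for $\mathcal{G}$, supplies only finitely many $\mathcal{A}$-isomorphism classes of source algebras of blocks of groups in $\mathcal{G}$ with defect groups isomorphic to $P$. Taking the union over these finitely many $P$, one obtains a finite set $\mathcal{S}$ of $\mathcal{A}$-isomorphism classes containing every source algebra of every block $B$ relevant to $\mathcal{V}_\mathcal{G}(Q)$.

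Third, I would fix one class in $\mathcal{S}$ with representative $(P,\alpha,A)$ and count the vertex-source pairs it produces. By the Morita equivalence between $B$ and $iBi$ recalled in \ref{noth:scealgvx}(a), every simple $FG$-module in the block $B$ corresponds to a simple $A$-module, and since $A$ is finite-dimensional there are only finitely many isomorphism classes of simple $A$-modules. For each simple $A$-module $N$, the $FP$-module $\res_\alpha(N)$ has only finitely many indecomposable direct summands, and by \cite[Prop. 38.3]{Thev} the vertex-source pairs of $(G,M)$ are precisely the pairs $(Q',?)$ of maximal vertex order among those summands; in particular, only finitely many equivalence classes of vertex-source pairs can arise in this way. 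The choice of representative $(P,\alpha,A)$ is immaterial because, by \ref{noth:scealgvx}(b), passing to another representative of the same $\mathcal{A}$-isomorphism class yields the same set of equivalence classes of vertex-source pairs.

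Combining the three finite bounds gives the finiteness of $\mathcal{V}_\mathcal{G}(Q)$, which is Feit's Conjecture \ref{conj:feit} for $\mathcal{G}$. The main conceptual hinge, and the point requiring the most care, is the Morita step in the third paragraph: one must invoke that the Morita correspondent of a simple $B$-module is again simple, and that the extraction of vertex-source pairs via the source algebra is both well-defined on $\mathcal{A}$-isomorphism classes and exhaustive. Both points are already packaged in \ref{noth:scealgvx}, so what remains is essentially bookkeeping.
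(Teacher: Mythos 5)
Your proposal is correct and follows essentially the same route as the paper's own proof: vertex-bounded-defect gives finitely many defect group types, Puig's Conjecture gives finitely many source algebras for each, and the Morita equivalence plus \cite[Prop. 38.3]{Thev} (as packaged in \ref{noth:scealgvx}) reduces the vertex-source pairs of simple modules to finitely many possibilities. The bookkeeping you defer is exactly what the paper's proof carries out.
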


\begin{proof}
Let $Q$ be a $p$-group, and let $c_{\mathcal{G}}(Q)$ 
be the integer appearing in Definition \ref{defi:vg}. 
Then there are finitely many
(mutually non-isomorphic) $p$-groups
$R_1,\ldots, R_n$ such that, whenever $G\in\mathcal{G}$ and 
$M$ is a simple $FG$-module with vertex isomorphic to $Q$,
the defect groups of the block containing $M$ 
are isomorphic to one of the groups in $\{R_1,\ldots,R_n\}$.

\medskip
Let $k\in\{1,\ldots,n\}$. Then, by Puig's Conjecture,
there are, up to isomorphism in $\mathcal{A}$, only finitely 
many interior $R_k$-algebras occurring as source algebras of $p$-blocks 
for groups in $\mathcal{G}$ with defect groups isomorphic to $R_k$. 
Denote by 
$\{(R_k,\alpha_{k,1},A_{k,1}),\ldots, (R_k,\alpha_{k,l_k},A_{k,l_k})\}$ 
a transversal for these isomorphism classes. 

\medskip
Let further $r\in\{1,\ldots,l_k\}$, and choose representatives
$\{M_{k,r,1},\ldots,M_{k,r,d_{k,r}}\}$ 
for the isomorphism classes of simple $A_{k,r}$-modules. 
Via restriction along $\alpha_{k,r}$ we get $FR_k$-modules 
$\res_{\alpha_{k,r}}(M_{k,r,1}),\ldots,\res_{\alpha_{k,r}}(M_{k,r,d_{k,r}})$. 
For each $i\in\{1,\ldots,d_{k,r}\}$ 
we determine a vertex-source pair $(Q_{k,r,i},S_{k,r,i})$
of an indecomposable direct summand of $\res_{\alpha_{k,r}}(M_{k,r,i})$
such that $|Q_{k,r,i}|$ is maximal.
So this gives rise to the finite set of pairs
$$ \mathcal{V}:=\bigcup_{k=1}^n\bigcup_{r=1}^{l_k}\bigcup_{i=1}^{d_{k,r}}
              \{(Q_{k,r,i},S_{k,r,i})\} .$$

\medskip
Consequently, by \cite[Prop. 38.3]{Thev}, any vertex-source pair
of some pair $(G,M)$, with $G\in\mathcal{G}$ and $M$ a simple $FG$-module,
is equivalent
to one of the pairs in the finite set $\mathcal{V}$.
Hence $\mathcal{V}_{\mathcal{G}}(Q)$ is finite, proving Feit's Conjecture.
\end{proof}


\medskip
To prove Feit's Conjecture for the 
groups listed 
in the main theorem, 
we are going to apply Theorem \ref{thm:feitred}.
In order to do so, we will show that each of these sets 
satisfies the vertex-bounded-defect property with respect to
any $p$-group; this will be done by giving explicit bounds as in the next theorem, 
whose proof will be broken up into several steps in subsequent sections.

\begin{thm}\label{thm:vtxbnd}
Let $Q$ be a $p$-group, let $G$ be a finite group possessing a simple
$FG$-module $M$ belonging to a block with defect group isomorphic to $P$,
and having vertices isomorphic to $Q$. Then the following hold:

\smallskip

\quad {\rm (a)}\, 
If $G=\mathfrak{S}_n$ then $|P|\leq |Q|!$.
\smallskip

\quad {\rm (b)}\, 
If $G=\mathfrak{A}_n$ and $p=2$ 
then $|P|\leq (|Q|+2)!/2$.
\smallskip

\quad{\rm (c)}\, 
If $G\in\{\widetilde{\mathfrak{S}}_n,\widehat{\mathfrak{S}}_n\}$ 
and $p\geq 3$ then $|P|\leq |Q|!$.
\smallskip

\quad {\rm (d)}\, 
If $G=\mathfrak{B}_n$ and $p\geq 3$ then $|P|\leq |Q|!$.
\smallskip

\quad {\rm (e)}\, 
If $G=\mathfrak{B}_n$ and $p=2$ then $|P|\leq |Q|\cdot\log_2(|Q|)!.$
\smallskip

\quad {\rm (f)}\, 
If $G=\mathfrak{D}_n$ and $p=2$ then $|P|\leq |Q|\cdot(\log_2(|Q|)+1)!.$
\end{thm}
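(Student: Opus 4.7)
The plan is to reduce the $\mathfrak{D}_n$-case to the $\mathfrak{B}_n$-case from part~(e) by Clifford theory with respect to the normal subgroup $\mathfrak{D}_n\trianglelefteq\mathfrak{B}_n$ of index~$2$. Given a simple $F\mathfrak{D}_n$-module $M$ with vertex $Q$, lying in a block $B$ of defect group $P$, I would first produce a simple $F\mathfrak{B}_n$-module $\tilde{M}$ with $M$ appearing as a constituent of $\res_{\mathfrak{D}_n}^{\mathfrak{B}_n}(\tilde{M})$ and lying in a block $\tilde{B}$ of $F\mathfrak{B}_n$ that covers $B$. Letting $\tilde{Q}$ be a vertex of $\tilde{M}$ and $\tilde{P}$ a defect group of $\tilde{B}$, the argument rests on the pair of index-$2$ relations $|\tilde{Q}|/|Q|,\,|\tilde{P}|/|P|\in\{1,2\}$, together with the compatibility that if $\tilde{Q}\not\leq\mathfrak{D}_n$ then also $\tilde{P}\not\leq\mathfrak{D}_n$.

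The defect-group relation $P=\tilde{P}\cap\mathfrak{D}_n$ (up to $\mathfrak{B}_n$-conjugation) is standard Clifford theory of covered blocks, yielding $|\tilde{P}|/|P|\in\{1,2\}$. For the vertex relation, Mackey's formula applied to $\res_{\mathfrak{D}_n}^{\mathfrak{B}_n}(\tilde{M})$, combined with the normality of $\mathfrak{D}_n$, produces two possibilities: either $\tilde{Q}\leq\mathfrak{D}_n$ with $|\tilde{Q}|=|Q|$, or $\tilde{Q}\not\leq\mathfrak{D}_n$ with $\tilde{Q}\cap\mathfrak{D}_n$ of index~$2$ in $\tilde{Q}$. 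The compatibility is elementary: since $\mathfrak{D}_n$ is normal, whether a $p$-subgroup of $\mathfrak{B}_n$ lies in $\mathfrak{D}_n$ is invariant under $\mathfrak{B}_n$-conjugation, and since $\tilde{Q}$ is contained in some $\mathfrak{B}_n$-conjugate of $\tilde{P}$, the condition $\tilde{Q}\not\leq\mathfrak{D}_n$ forces $\tilde{P}\not\leq\mathfrak{D}_n$ and therefore $|\tilde{P}|=2|P|$.

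Applying part~(e) to the pair $(\tilde{P},\tilde{Q})$ then gives $|\tilde{P}|\leq|\tilde{Q}|\cdot\log_2(|\tilde{Q}|)!$. In the first case above, $|P|\leq|\tilde{P}|\leq|Q|\cdot\log_2(|Q|)!$, which is even stronger than the claim. In the second case, $|\tilde{Q}|=2|Q|$ and $|\tilde{P}|=2|P|$, so dividing $2|P|\leq 2|Q|\cdot\log_2(2|Q|)!=2|Q|\cdot(\log_2(|Q|)+1)!$ by~$2$ produces exactly the desired bound $|P|\leq|Q|\cdot(\log_2(|Q|)+1)!$.

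The main technical obstacle will be the careful handling of Clifford theory in the present setting, since the index $2$ coincides with the characteristic $p=2$, so that $\ind_{\mathfrak{D}_n}^{\mathfrak{B}_n}(M)$ need not be semisimple when $M\cong M^{\sigma}$ for $\sigma$ generating $\mathfrak{B}_n/\mathfrak{D}_n$; the simple cover $\tilde{M}$ must then be extracted as an extension rather than as a direct summand of $\ind M$. Crucially, the tight coupling between the doublings of $|\tilde{Q}|/|Q|$ and $|\tilde{P}|/|P|$, forced by the normality of $\mathfrak{D}_n$, is what prevents the final bound from picking up an extra factor of~$2$.
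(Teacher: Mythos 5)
Your proposal addresses only part~(f), taking part~(e) and the remaining cases as given; as a proof of the full theorem it is therefore incomplete, but as a derivation of~(f) from~(e) it is essentially sound and follows a genuinely different route from the paper. The paper handles $\mathfrak{D}_n$ directly, exactly as it handles $\mathfrak{B}_n$: the base group intersection $H\cap\mathfrak{A}_{2n}\cong C_2^{n-1}$ is a normal $2$-subgroup of $\mathfrak{D}_n$ with $2$-group centralizer, so $F\mathfrak{D}_n$ has only the principal block (whence $P\in\syl_2(\mathfrak{D}_n)$ and $|P|=2^{n-1}\cdot n!$), and this normal $2$-subgroup acts trivially on every simple module and hence lies in every vertex, giving $|Q|\geq 2^{n-1}$ and the bound in one line. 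Your Clifford-theoretic descent along $\mathfrak{D}_n\unlhd\mathfrak{B}_n$ is heavier machinery but more portable (it is really an index-$p$ reduction that does not use the specific structure of $\mathfrak{D}_n$ at all); you also correctly identify, and correctly establish, the coupling $\tilde{Q}\not\leq\mathfrak{D}_n\Rightarrow\tilde{P}\not\leq\mathfrak{D}_n$ that prevents the stray factor of~$2$.

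The one step your sketch under-justifies is the equality $|\tilde{Q}|=2|Q|$ in the case $\tilde{Q}\not\leq\mathfrak{D}_n$. Mackey's formula applied to $\res_{\mathfrak{D}_n}^{\mathfrak{B}_n}$ of a relatively $\tilde{Q}$-projective presentation of $\tilde{M}$ gives only $Q\leq {}^{g}\tilde{Q}\cap\mathfrak{D}_n$ for some $g$, i.e.\ $|\tilde{Q}|\geq 2|Q|$ --- the wrong direction for your final estimate, which needs $|\tilde{Q}|\leq 2|Q|$. The reverse bound cannot be obtained by running Mackey upwards through $\ind_{\mathfrak{D}_n}^{\mathfrak{B}_n}(M)$, because precisely in the critical case ($M$ invariant, $\tilde{M}$ an extension of $M$ with $\res_{\mathfrak{D}_n}^{\mathfrak{B}_n}(\tilde{M})=M$) the module $\tilde{M}$ is \emph{not} relatively $\mathfrak{D}_n$-projective, as $\ind_{\mathfrak{D}_n}^{\mathfrak{B}_n}(M)$ is then an indecomposable non-split self-extension of $\tilde{M}$. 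What you actually need is Kn\"orr's theorem on vertices over normal subgroups (some vertex $\tilde{Q}$ of $\tilde{M}$ satisfies that $\tilde{Q}\cap\mathfrak{D}_n$ is a vertex of the indecomposable summand $M$ of the restriction), together with its block-theoretic counterpart ($\tilde{P}\cap\mathfrak{D}_n$ is a defect group of the covered block) which you do invoke for $P$. Both are standard and citable, for instance from \cite{NT}, so this is a missing citation rather than a fatal flaw; but as written, ``Mackey plus normality'' does not deliver the inequality your computation uses.
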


\begin{proof}
\smallskip

\quad {\rm (a)}\, 
follows from \cite[Thm. 5.1]{DK}.

\smallskip

\quad {\rm (b)}\, 
is proved in Proposition \ref{prop:boundAn}.

\smallskip

\quad {\rm (c)}\, 
is proved in Proposition \ref{prop:boundcover}.

\smallskip

\quad {\rm (d)}\,
is proved in Proposition \ref{prop:puig_weylB}.

\smallskip

\quad {\rm (e) and (f)}\,
are proved in \ref{noth:weyleven}. 
\end{proof}

Using Theorems \ref{thm:feitred} and \ref{thm:vtxbnd}, 
we are now in a position to prove our main result:

\begin{thm}\label{thm:summary}
Feit's Conjecture holds for the following 
groups:
$$\{\mathfrak{S}_n\}_{n\geq 1},\quad 
  \{\mathfrak{A}_n\}_{n\geq 1},\quad 
  \{\widetilde{\mathfrak{S}}_n\}_{n\geq 1},\quad 
  \{\widehat{\mathfrak{S}}_n\}_{n\geq 1},\quad 
  \{\widetilde{\mathfrak{A}}_n\}_{n\geq 1},\quad 
  \{\mathfrak{B}_n\}_{n\geq 2},\quad 
  \{\mathfrak{D}_n\}_{n\geq 4}.$$ 
\end{thm}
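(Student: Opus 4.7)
The plan is to apply the Reduction Theorem \ref{thm:feitred} to each of the seven families $\mathcal{G}$ appearing in the statement. This requires two ingredients: Puig's Conjecture \ref{conj:puig} for $\mathcal{G}$, and the vertex-bounded-defect property for $\mathcal{G}$ with respect to every $p$-group $Q$.

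Puig's Conjecture is already on record for every family in question, by the work of Kessar \cite{Kess3,Kess2,Kess}, Puig \cite{Pu}, and Scopes \cite{Sc} cited in the introduction; I would simply invoke those results. For the vertex-bounded-defect property, Theorem \ref{thm:vtxbnd} supplies explicit bounds that cover most combinations of family and characteristic directly: $\mathfrak{S}_n$ in all $p$, $\mathfrak{A}_n$ for $p=2$, $\widetilde{\mathfrak{S}}_n$ and $\widehat{\mathfrak{S}}_n$ for odd $p$, $\mathfrak{B}_n$ in all $p$, and $\mathfrak{D}_n$ for $p=2$.

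The remaining combinations, namely $\mathfrak{A}_n$ and $\mathfrak{D}_n$ in odd characteristic, the double covers $\widetilde{\mathfrak{S}}_n, \widehat{\mathfrak{S}}_n$ in characteristic $2$, and $\widetilde{\mathfrak{A}}_n$ in every characteristic, I would dispose of by standard reductions. For a double cover $G$ in characteristic $p=2$, the kernel $Z$ of the covering map is a central subgroup of order $2$; since $F$ is algebraically closed of characteristic $2$ and therefore has a $2$-torsion-free multiplicative group, Schur's Lemma forces $Z$ to act trivially on every simple $FG$-module. Hence every such module is inflated from the quotient, and the vertices and defect groups of $FG$-blocks are the preimages of those of the corresponding blocks of the quotient, doubling in order; so the bounds of parts (a) and (b) of Theorem \ref{thm:vtxbnd} transfer after multiplication by $2$. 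For the three index-$2$ pairs $N\leq G$, namely $\mathfrak{A}_n\leq \mathfrak{S}_n$, $\widetilde{\mathfrak{A}}_n\leq \widetilde{\mathfrak{S}}_n$, and $\mathfrak{D}_n\leq \mathfrak{B}_n$, in odd characteristic, the fact that $[G:N]=2$ is coprime to $p$ forces vertices and defect groups of simple $FN$-modules to be $G$-conjugate to those of the simple $FG$-modules covering them, via standard Clifford theory; the bounds of parts (a), (c), (d) of Theorem \ref{thm:vtxbnd} therefore carry over verbatim.

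The main obstacle --- and the sole source of genuine content --- is Theorem \ref{thm:vtxbnd} itself, whose parts (b) and (c) rely on a detailed analysis of self-centralizing Brauer pairs via Kn\"orr's Theorem (the subject of Sections \ref{sec:alt} and the subsequent sections on the double covers), and whose parts (e) and (f) for the Weyl groups in characteristic $2$ require a direct study of the structure of vertices. Once Theorem \ref{thm:vtxbnd} is established and supplemented by the reductions above, the vertex-bounded-defect property holds for every family $\mathcal{G}$ with respect to every $p$-group $Q$, and a family-by-family application of Theorem \ref{thm:feitred} delivers Feit's Conjecture in each of the seven cases.
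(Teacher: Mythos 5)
Your overall architecture (Theorem \ref{thm:feitred} applied family by family) matches the paper's, and the transfer arguments you sketch for the combinations not covered by Theorem \ref{thm:vtxbnd} are essentially correct: at $p=2$ the central subgroup $Z$ of order $2$ acts trivially on simple modules of the double covers, so vertices and defect groups are preimages of those for the quotient and the bounds transfer up to a factor of $2$; and for the index-$2$ subgroups $\mathfrak{A}_n\leq\mathfrak{S}_n$, $\widetilde{\mathfrak{A}}_n\leq\widetilde{\mathfrak{S}}_n$, $\mathfrak{D}_n\leq\mathfrak{B}_n$ at odd $p$, Clifford theory does carry the bounds over verbatim. Where your route genuinely diverges from the paper's is in what it demands of Puig's Conjecture \ref{conj:puig}: for every one of these residual combinations ($\mathfrak{A}_n$ and $\mathfrak{D}_n$ at odd $p$, the double covers of $\mathfrak{S}_n$ at $p=2$, and $\widetilde{\mathfrak{A}}_n$ in every characteristic) the paper never invokes Theorem \ref{thm:feitred} at all. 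Instead it compares vertex-source-pair sets directly, showing $\mathcal{V}_{\mathcal{G}}(Q)\subseteq\mathcal{V}_{\{\mathfrak{S}_n\}}(Q)$ (a simple module of an index-$2$ subgroup is a summand of the restriction of a simple module of the overgroup and shares its vertex-source pairs) or $|\mathcal{V}_{\mathcal{G}}(Q)|\leq|\mathcal{V}_{\{\mathfrak{S}_n\}}(Q/Z)|$ (sources of modules inflated along $Z$ are inflated, by the cited results of K\"ulshammer and Harris), and then quotes the already-settled family.

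The concrete gap in your proposal is therefore the blanket assertion that Puig's Conjecture ``is already on record for every family in question.'' The references actually cover $\mathfrak{S}_n$ (all $p$), $\mathfrak{A}_n$ at $p=2$, the double covers of $\mathfrak{S}_n$ at odd $p$, and the Weyl groups; they do not obviously cover, for instance, $\widetilde{\mathfrak{S}}_n$, $\widehat{\mathfrak{S}}_n$ or $\widetilde{\mathfrak{A}}_n$ at $p=2$, nor $\widetilde{\mathfrak{A}}_n$ at odd $p$ --- and the paper's case distinctions are structured precisely so as never to need Puig's Conjecture in those cases. Since your plan establishes only the vertex-bounded-defect property for them and then still feeds them into Theorem \ref{thm:feitred}, it leaves Puig's Conjecture for those groups as an unproved input. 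To close the argument you should either supply that input (e.g.\ by relating source algebras of blocks of a central extension or an index-$2$ subgroup to those of the ambient group, which is not immediate) or, more economically, replace the appeal to Theorem \ref{thm:feitred} in exactly those cases by the direct comparison of vertex-source-pair sets described above.
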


\begin{proof}
(i) Let $\mathcal{G}=\{\mathfrak{S}_n\}_{n\geq 1}$. Then Puig's Conjecture
holds for $\mathcal{G}$, by work of Puig \cite{Pu} and Scopes \cite{Sc}.
Moreover, $\mathcal{G}$ has the vertex-bounded-defect property 
with respect to any $p$-group, by Theorem \ref{thm:vtxbnd}(a).
Hence Feit's Conjecture holds, by Theorem \ref{thm:feitred}.

\medskip
(ii) Let $\mathcal{G}=\{\mathfrak{A}_n\}_{n\geq 1}$. 
Suppose first that $p\geq 3$, 
and let $E$ be a simple $F\mathfrak{A}_n$-module with 
vertex-source pair $(Q,L)$. Then there is a simple 
$F\mathfrak{S}_n$-module $D$ such that
$E\mid\res_{\mathfrak{A}_n}^{\mathfrak{S}_n}(D)$. 
Furthermore, $(Q,L)$ is also a vertex-source pair of $D$. Hence we have 
$\mathcal{V}_{\mathcal{G}}(Q)\subseteq\mathcal{V}_{\{\mathfrak{S}_n\}}(Q)$,
and we are done using (i).

\medskip
Let now $p=2$. Then Puig's Conjecture
holds for $\mathcal{G}$, by work of Kessar \cite{Kess}.
Moreover, $\mathcal{G}$ has the vertex-bounded-defect property
with respect to any $p$-group, by Theorem \ref{thm:vtxbnd}(b).
Hence Feit's Conjecture holds, by Theorem \ref{thm:feitred}.

\medskip
(iii) Let $\mathcal{G}=\{\widetilde{\mathfrak{S}}_n\}_{n\geq 1}$,
where we may argue identically for 
$\mathcal{G}=\{\widehat{\mathfrak{S}}_n\}_{n\geq 1}$.
Suppose first that $p\geq 3$. Then Puig's Conjecture
holds for $\mathcal{G}$, by work of Kessar \cite{Kess3}.
Moreover, $\mathcal{G}$ has the vertex-bounded-defect property
with respect to any $p$-group, by Theorem \ref{thm:vtxbnd}(c).
Hence Feit's Conjecture holds, by Theorem \ref{thm:feitred}.

\medskip
Let now $p=2$, 
and let $D$ be a simple $F\widetilde{\mathfrak{S}}_n$-module.
Since $Z:=\langle z\rangle\leq Z(\widetilde{\mathfrak{S}}_n)$,
in the notation of \ref{noth:schurcov}, is a normal
$2$-subgroup of $\widetilde{\mathfrak{S}}_n$, it acts trivially on $D$. 
Thus there is a simple $F\mathfrak{S}_n$-module $\overline{D}$ such that 
$D=\Inf_Z^{\widetilde{\mathfrak{S}}_n}(\overline{D})$,
where $\Inf$ denotes the inflation from $F\mathfrak{S}_n$-modules
to $F\widetilde{\mathfrak{S}}_n$-modules via the 
normal subgroup $Z\unlhd\widetilde{\mathfrak{S}}_n$.
If $(Q,L)$ is a vertex-source pair of $D$ then $Z\leq Q$ and
$\overline{Q}:=Q/Z$ is a vertex of $\overline{D}$.
Moreover, there is an indecomposable $F\overline{Q}$-module $\overline{L}$ 
such that $L\cong\Inf_Z^Q(\overline{L})$ and such that 
$(\overline{Q},\overline{L})$ is a vertex-source pair of $\overline{D}$,
see \cite[Prop. 2.1]{Ku} and \cite[Prop. 2]{Harr}. Hence we have
$|\mathcal{V}_{\mathcal{G}}(Q)|\leq
 |\mathcal{V}_{\{\mathfrak{S}_n\}}(\overline{Q})|$,
and we are done by (i).

\medskip
(iv) Let $\mathcal{G}=\{\widetilde{\mathfrak{A}}_n\}_{n\geq 1}$. 
Letting again first $p\geq 3$, we may argue as in (ii) to show that
$\mathcal{V}_{\mathcal{G}}(Q)\subseteq
\mathcal{V}_{\{\widetilde{\mathfrak{S}}_n\}}(Q)$,
and we are done using (iii).
Moreover, letting $p=2$, since $Z\leq\widetilde{\mathfrak{A}}_n$,
again using the notation of \ref{noth:schurcov},
we may argue as in (iii) to show that
$|\mathcal{V}_{\mathcal{G}}(Q)|\leq
 |\mathcal{V}_{\{\mathfrak{A}_n\}}(\overline{Q})|$,
and we are done using (ii).

\medskip
(v) Let $\mathcal{G}=\{\mathfrak{B}_n\}_{n\geq 2}$. Then Puig's Conjecture
holds for $\mathcal{G}$, by work of Kessar \cite{Kess2}.
Moreover, $\mathcal{G}$ has the vertex-bounded-defect property
with respect to any $p$-group, by Theorem \ref{thm:vtxbnd}(d) 
and (e). Hence Feit's Conjecture holds, by Theorem \ref{thm:feitred}.

\medskip
(vi) Let $\mathcal{G}=\{\mathfrak{D}_n\}_{n\geq 4}$.
Again suppose first that $p\geq 3$.
Then we may argue as in (ii) to show that
$\mathcal{V}_{\mathcal{G}}(Q)\subseteq\mathcal{V}_{\{\mathfrak{B}_n\}}(Q)$,
and we are done using (v).
Moreover, letting $p=2$, Puig's Conjecture holds for $\mathcal{G}$, 
by work of Kessar \cite{Kess2}, 
and $\mathcal{G}$ has the vertex-bounded-defect property
by Theorem \ref{thm:vtxbnd}(f).
Hence Feit's Conjecture holds, by Theorem \ref{thm:feitred}.
\end{proof}

\begin{rem}\label{rem:dihedral}
\normalfont
We remark that the list of groups in Theorem \ref{thm:summary}
in particular encompasses all infinite series of real
reflection groups, except the groups of type $I_2(m)$,
that is, the dihedral groups $D_{2m}$, where $m\geq 3$.
We give a direct proof that Feit's Conjecture holds for 
$\mathcal{G}=\{D_{2m}\}_{m\geq 3}$ as well;
note that, since $D_{2m}$ is soluble, this also follows 
from much more general work of Puig \cite{Puigpsol,Puig}:

\medskip
Let first $p$ be odd. Then $D_{2m}\cong C_m:C_2$ has a normal
cyclic Sylow $p$-subgroup $C_{m_p}$, where $m_p$ denotes the $p$-part 
of $m$. Hence, by \cite{Erd}, any simple $FD_{2m}$-module
has the normal subgroup $C_{m_p}$ as its vertex, 
and is thus a trivial-source module.
Hence Feit's Conjecture holds for $\mathcal{G}$. 
Note that, by \cite[Thm. 45.12]{Thev}, the source algebras
of the blocks in question are isomorphic to 
$FC_{m_p}$ or $F[C_{m_p}:C_2]\cong FD_{2m_p}$
as interior $C_{m_p}$-algebras,
thus Puig's Conjecture holds for $\mathcal{G}$ as well.

\medskip
Let now $p=2$, and let $D$ be a simple $FD_{2m}$-module.
Then there are two cases:
if $D$ is relatively $C_m$-projective then
$D$ has a normal subgroup $C_{m_2}$ as its vertex, 
and is thus a trivial-source module.
If $D$ is not relatively $C_m$-projective then
its restriction to $C_m\unlhd D_{2m}$ is simple, hence 
one-dimensional, implying again that $D$ is a trivial-source module.
Hence Feit's Conjecture holds for $\mathcal{G}$. 
Note that, since $D_{2m}$ is $2$-nilpotent, by \cite[Prop. 49.13]{Thev}
the blocks in question are nilpotent,
hence, by Puig's Theorem \cite[Thm. 50.6]{Thev},
their source algebras are isomorphic to
$\End_F(iD)\otimes_F FP$, where
$i$ denotes a source idempotent, and
the defect groups in the two cases are
$P=C_{m_2}$ and $P=D_{2m_2}$, respectively;
thus, since $D$ is a trivial-source module, we, moreover, conclude
that $iD$ is the trivial $FP$-module, hence 
the source algebras are isomorphic to $FP$ as interior $P$-algebras,
so that Puig's Conjecture holds for $\mathcal{G}$ as well.
\end{rem}

\medskip
Thus it remains to prove Theorem \ref{thm:vtxbnd}.
To do so, we will often argue along the lines of \cite{DK},
where Theorem \ref{thm:vtxbnd}(a) has already been established.
The key to this line of reasoning is the following:

\begin{rem}\label{rem:knoerr}
\normalfont
By a \textit{Brauer pair} of a group $G$ we understand
a pair $(P,b)$ where $P$ is a $p$-subgroup
of $G$ and $b$ is a block of $F[PC_G(P)]$.
Recall that the \textit{Brauer correspondent} $b^G$, a block of $FG$,
is defined, and if $B=b^G$ then we call $(P,b)$ a \textit{Brauer $B$-pair}. 
Moreover, in the case that $P$ is a defect group of the block $b$ 
we call $(P,b)$ a \textit{self-centralizing} Brauer pair.

\medskip
Let now $\mathcal{G}$ be a set 
of groups, and let $Q$ be
a $p$-group. We say that $\mathcal{G}$ has the 
\textit{strongly-bounded-defect property} with respect to $Q$ if
there is an integer $d_{\mathcal{G}}(Q)$ such that, 
for every group $G$ in $\mathcal{G}$,
the Brauer correspondent $(b_Q)^G$ of any self-centralizing Brauer pair 
$(Q,b_Q)$ of $G$ has defect groups of order at most $d_{\mathcal{G}}(Q)$.

\medskip
By Kn\"orr's Theorem \cite{Kn}, given a block $B$ of $G$,
a self-centralizing Brauer $B$-pair exists, in particular, 
in the case where $Q$ is a vertex of some simple
$FG$-module $M$ belonging to the block $B$.
Hence to prove the vertex-bounded-defect property of $\mathcal{G}$
with respect to a $p$-group $Q$, it suffices to show the 
strongly-bounded-defect property of $\mathcal{G}$ with respect to $Q$, and we
infer $c_{\mathcal{G}}(Q)\leq d_{\mathcal{G}}(Q)$.
We remark that the converse of Kn\"orr's Theorem does not hold, 
see for example Example \ref{expl:cases},
but, to the authors' knowledge, there are no general 
results known towards a characterization of those self-centralizing Brauer 
pairs whose first components actually occur as a vertices of simple modules.

\medskip
Actually, we prove the strongly-bounded-defect property in the Cases 
(a)--(c) of Theorem \ref{thm:vtxbnd},
while for the Cases (d)--(f) we are content with the
weaker vertex-bounded-defect property.
\end{rem}

\medskip
Before we proceed, we give a lemma needed later,
relating Brauer correspondence to covering of blocks.
It should be well known, but we have not been able 
to find a suitable reference.

\begin{lemma}\label{lemma:brauercov}
Let $G$ be a finite group, and let $H\unlhd G$.
Moreover, let $Q\leq H$ be a $p$-subgroup, let $(Q,b)$
be a Brauer pair of $H$, that is, $b$ is a block of $F[QC_H(Q)]$, 
and let $\tilde{b}$ be a block of $F[QC_G(Q)]$ covering $b$. 
Then the Brauer correspondent $\tilde{b}^G$ of $\tilde{b}$ in $G$ 
covers the Brauer correspondent $b^H$ of $b$ in $H$.
\end{lemma}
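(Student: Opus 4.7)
The plan is to reduce the statement to a direct computation with block idempotents in $FC_G(Q)$ via the Brauer homomorphism. First, I would verify the elementary inclusions $QC_H(Q)=H\cap QC_G(Q)$ and $C_H(Q)=H\cap C_G(Q)$, so that $QC_H(Q)\unlhd QC_G(Q)$ and $C_H(Q)\unlhd C_G(Q)$. Since $Q$ centralises $C_G(Q)$ pointwise and $QC_G(Q)/C_G(Q)\cong Q/Z(Q)$ is a $p$-group, every block idempotent of $FC_G(Q)$ is $QC_G(Q)$-stable, and by standard Clifford-theoretic reasoning (the group algebra of a $p$-group has a unique block) it remains a primitive central idempotent of $F[QC_G(Q)]$; thus blocks of $F[QC_G(Q)]$ and of $FC_G(Q)$ correspond bijectively with identical block idempotents, and similarly for $H$. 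Writing $\bar b$ and $\bar{\tilde b}$ for the blocks of $FC_H(Q)$ and $FC_G(Q)$ that correspond to $b$ and $\tilde b$, the hypothesis that $\tilde b$ covers $b$ translates into $e_{\bar b}\,e_{\bar{\tilde b}}\neq 0$ in $FC_G(Q)$.

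Next, the key tool is the Brauer homomorphism $\mathrm{Br}_Q^G:(FG)^Q\to FC_G(Q)$ together with its counterpart $\mathrm{Br}_Q^H:(FH)^Q\to FC_H(Q)$; since $H\unlhd G$, the map $\mathrm{Br}_Q^G$ restricted to $(FH)^Q$ coincides with $\mathrm{Br}_Q^H$ under the inclusion $FC_H(Q)\hookrightarrow FC_G(Q)$. Setting $B:=b^H$ and $\tilde B:=\tilde b^G$, the standard formula for the Brauer map on a block idempotent yields
\[
\mathrm{Br}_Q^H(e_B)=\sum_{c:\,c^H=B}e_{\bar c},\qquad \mathrm{Br}_Q^G(e_{\tilde B})=\sum_{\tilde c:\,\tilde c^G=\tilde B}e_{\bar{\tilde c}},
\]
where $c$ ranges over those blocks of $FC_H(Q)$ whose Brauer correspondent in $H$ is $B$ (so in particular $\bar b$ occurs), and analogously for $\tilde c$ (so $\bar{\tilde b}$ occurs).

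Finally, using $e_B\in FH$ and the compatibility above, one computes
\[
\mathrm{Br}_Q^G(e_B\,e_{\tilde B})=\mathrm{Br}_Q^H(e_B)\cdot\mathrm{Br}_Q^G(e_{\tilde B})=\sum_{c,\tilde c}e_{\bar c}\,e_{\bar{\tilde c}}.
\]
For two distinct index pairs $(c,\tilde c)\neq(c',\tilde c')$, the product of the corresponding summands vanishes: either $c\neq c'$ forces $e_{\bar c}e_{\bar{c'}}=0$ by orthogonality of blocks in $FC_H(Q)$, or $\tilde c\neq\tilde c'$ forces $e_{\bar{\tilde c}}e_{\bar{\tilde c'}}=0$ by orthogonality in $FC_G(Q)$. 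Hence the summands are pairwise orthogonal, and the summand indexed by $(\bar b,\bar{\tilde b})$ equals $e_{\bar b}e_{\bar{\tilde b}}\neq 0$ by the covering hypothesis. Therefore $\mathrm{Br}_Q^G(e_B e_{\tilde B})\neq 0$, so $e_B e_{\tilde B}\neq 0$ in $FG$, which is precisely the condition that $\tilde B=\tilde b^G$ covers $B=b^H$. The main delicate point is setting up the bijection between blocks of $F[QC_G(Q)]$ and of $FC_G(Q)$ carefully, so that the covering relation transports cleanly; once this identification and the Brauer-map formula for block idempotents are in hand, the argument is essentially forced.
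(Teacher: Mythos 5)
Your proof is correct, but it takes a genuinely different route from the paper's. The paper disposes of the lemma in four lines using Passman's Theorem \cite[Thm. 5.5.5]{NT}: covering is characterized by agreement of central characters on $G$-class sums of elements of $H$, and then the chain $\omega_{b^H}(h^{G+})=\omega_b((h^G\cap QC_H(Q))^+)=\omega_b((h^G\cap QC_G(Q))^+)=\omega_{\tilde b}((h^G\cap QC_G(Q))^+)=\omega_{\tilde b^G}(h^{G+})$ follows from the central-character definition of the Brauer correspondent, the identity $H\cap QC_G(Q)=QC_H(Q)$, and Passman applied once more to $QC_H(Q)\unlhd QC_G(Q)$. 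You instead work with the idempotent characterization of covering ($e_{b^H}e_{\tilde b^G}\neq 0$) and the Brauer homomorphism, using the decomposition $\mathrm{Br}_Q(e_B)=\sum_{c\,:\,c^G=B}e_{c}$ and the multiplicativity of $\mathrm{Br}_Q$ on $(FG)^Q$. All the steps you flag as delicate do go through: $Q$ centralizes $C_G(Q)$ by definition, so blocks of $FC_G(Q)$ are $QC_G(Q)$-stable and, since the quotient is a $p$-group, remain block idempotents of $F[QC_G(Q)]$; the summands $e_{\bar c}e_{\bar{\tilde c}}$ are commuting products of idempotents, hence pairwise orthogonal idempotents, so one nonzero summand forces the sum to be nonzero. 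Your argument is longer and needs more infrastructure (the block identification and the Brauer-map formula, plus the standard fact that the Brauer-homomorphism and central-character definitions of the correspondent agree), but it buys a proof that stays entirely inside the idempotent calculus and avoids invoking Passman's characterization of covering; the paper's version is shorter because both covering and the Brauer correspondence are expressed in the same central-character language from the outset.
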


\begin{proof}
By Passman's Theorem \cite[Thm. 5.5.5]{NT}, we have to show that
$$ \omega_{\tilde{b}^G}(h^{G+})=\omega_{b^H}(h^{G+})
   \quad\text{ for all }h\in H ,$$
where the $\omega$'s are the associated central characters,
$h^G$ is the $G$-conjugacy class of $h\in H$,
and $M^+$ denotes the sum over any subset $M\subseteq G$.
By definition of the Brauer correspondence, and by Passman's Theorem again,
for all $h\in H$, we have
$$ \omega_{b^H}(h^{G+}) 
 \!=\! \omega_b((h^G\cap QC_H(Q))^+) 
 \!=\! \omega_b((h^G\cap QC_G(Q))^+)
 \!=\! \omega_{\tilde{b}}((h^G\cap QC_G(Q))^+) 
 \!=\! \omega_{\tilde{b}^G}(h^{G+}),$$
proving the lemma.
\end{proof}

\section{The Alternating Groups $\mathfrak{A}_n$}\label{sec:alt}

We proceed to prove the bound given in Theorem \ref{thm:vtxbnd}
for the alternating groups. We begin by fixing our notation 
for the Sylow $p$-subgroups of the symmetric and alternating groups, 
respectively; for later use we do this for arbitrary $p$.
Then we focus on the case $p=2$,
collect the necessary facts about the
self-centralizing Brauer pairs of the alternating groups,
and use this to finally prove the desired bound.

\begin{noth}\label{noth:sylowSn}
\normalfont
{\sl Sylow $p$-subgroups.}\, (a) 
We will use the following convention for denoting the 
Sylow $p$-subgroups of $\mathfrak{S}_n$ and $\mathfrak{A}_n$, respectively.
Let $\mathfrak{S}_n$ act on the set $\{1,\ldots,n\}$.
Suppose first that $n=p^m$, for some $m\in\mathbb{N}$.
Moreover, let $C_p:=\langle (1,2,\ldots,p)\rangle$, and set 
$P_1:=1$, $P_p:=C_p$ and 
$P_{p^{i+1}}:=P_{p^i}\wr C_p
=\{(x_1,x_2,\ldots,x_p;\sigma)\mid x_1,\ldots,x_p\in P_{p^i},\, \sigma\in C_p\}$ 
for $i\geq 1$. As usual, for any $i\in\mathbb{N}_0$, we view $P_{p^i}$ 
as a subgroup of $\mathfrak{S}_{p^i}$ in the obvious way.
Then, 
by \cite[4.1.22, 4.1.24]{JK}, $P_{p^m}$ is a Sylow $p$-subgroup of 
$\mathfrak{S}_{p^m}$, and is generated by the following elements, 
where $j=1,\ldots,m$:
\begin{equation}\label{equ:Pgens}
g_j:=\prod_{k=1}^{p^{j-1}}(k,k+p^{j-1},k+2p^{j-1},\ldots, k+(p-1)p^{j-1}).
\end{equation}
For instance, if $p=2$ then $P_8$ is generated by 
$g_1=(1,2),\; g_2=(1,3)(2,4)$, and $g_3=(1,5)(2,6)(3,7)(4,8)$.

\medskip
(b) Next let $n\in\mathbb{N}$ be divisible by $p$, with $p$-adic expansion 
$n=\sum_{j=1}^s \alpha_jp^{i_j}$, for some $s\geq 1$, $i_1>\ldots >i_s\geq 1$,
and $1\leq \alpha_j\leq p-1$ for $j=1,\ldots,s$.
By \cite[4.1.22, 4.1.24]{JK}, $P_n:=\prod_{j=1}^s \prod_{l_j=1}^{\alpha_j}P_{p^{i_j},l_j}$
is then a Sylow $p$-subgroup of $\mathfrak{S}_n$. Here, the direct factor
$P_{p^{i_1},1}$ is acting on $\{1,\ldots,p^{i_1}\}$, $P_{p^{i_1},2}$ is acting
on $\{p^{i_1}+1,\ldots,2p^{i_1}\}$, and so on.
If, finally, $n\geq p+1$ is not divisible by $p$ then
we set $P_n:=P_r$ where $r<n$ is maximal with $p\mid r$.
So, in any case, $P_n$ is a Sylow $p$-subgroup of $\mathfrak{S}_n$.

\medskip
(c) We will examine the case $p=2$ in more detail, as this will be
of particular importance for our subsequent arguments.
As above, suppose that $n$ is even, with $2$-adic expansion
$n=\sum_{j=1}^s2^{i_j}$, for some $s\geq 2$ and 
$i_1>i_2>\ldots >i_s\geq 1$. Letting $n_j:=2^{i_j}$,
we get
$P_n=\prod_{j=1}^s P_{n_j}$,
where 
$P_{n_j}$ is understood to be acting on the set 
$$ \Omega_j:=\left\{(\sum_{l=1}^{j-1}n_l)+1,\ldots,\sum_{l=1}^jn_l\right\} ,$$ 
for $j=1,\ldots,s$. The corresponding generating set for $P_{n_j}$
given by (\ref{equ:Pgens}) will be denoted by 
$\{g_{1,j},\ldots,g_{i_j,j}\}$, for $j=1,\ldots,s$.
So if, for instance, $n=14=8+4+2$ then $P_n=P_{14}$ is generated by 
$g_{1,1}=(1,2)$, $g_{2,1}=(1,3)(2,4)$, $g_{3,1}=(1,5)(2,6)(3,7)(4,8)$, 
$g_{1,2}=(9,10)$, $g_{2,2}=(9,11)(10,12)$, and $g_{1,3}=(13,14)$.

\medskip
(d) We now set $Q_n:=P_n\cap\mathfrak{A}_n$, so that $Q_n$ 
is a Sylow $p$-subgroup of the alternating group $\mathfrak{A}_n$. 
If $p>2$ then clearly $Q_n=P_n$. Thus, suppose again that $p=2$.
If $n=2$ then $Q_n=Q_2=1$.
If $n=2^m$, for some $m\geq 2$, 
then, by (\ref{equ:Pgens}), we obtain the following generators for $Q_n$:
\begin{equation}\label{equ:Qgens}
h_1:=(1,2)(2^{m-1}+1,2^{m-1}+2);\quad \, h_j:=g_j,\text{ for } j=2,\ldots,m.
\end{equation}
For clearly $Q:=\langle h_1,\ldots,h_m\rangle\leq Q_n$, and 
$Q\langle (1,2)\rangle=\langle (1,2)\rangle Q=P_n$. 
Thus $Q=Q_n$.

\medskip
If $n>4$ is even but not a power of $2$ then we again consider 
the $2$-adic expansion $n=\sum_{j=1}^s2^{i_j}$, for some
$s\geq 2$ and some $i_1>\ldots >i_s\geq 1$. Then the 
following elements generate $Q_n$:
\begin{equation}\label{equ:Qgens2}
h_{1,j}:=g_{1,s}g_{1,j},\text{ for } j=1,\ldots,s-1;\quad
h_{k,j}:=g_{k,j}, \text{ for } j=1,\ldots,s \text{ and } k=2,\ldots,i_j.
\end{equation}
Namely, these elements generate a subgroup $Q$ of $Q_n$ such that 
$Q\langle g_{1,s}\rangle=\langle g_{1,s}\rangle Q=P_n$.
For instance, 
$Q_{14}=P_{14}\cap\mathfrak{A}_{14}
=(P_8\times P_4\times P_2)\cap\mathfrak{A}_{14}$ is 
generated by the elements 
$h_{1,1}=(1,2)(13,14)$, $h_{1,2}=(9,10)(13,14)$,
$h_{2,1}=(1,3)(2,4)$, $h_{3,1}=(1,5)(2,6)(3,7)(4,8)$,
and $h_{2,2}=(9,11)(10,12)$.
\end{noth}

\medskip 
For the remainder of this section, let $p=2$.

\begin{prop}\label{prop:centre}
Let $n=\sum_{j=1}^s2^{i_j}\geq 2$ be the $2$-adic expansion of $n$,
where $s\in\mathbb{N}$ and $i_1>\ldots >i_s\geq 1$, and let again 
$n_j:=2^{i_j}$ for $j=1,\ldots,s$.

\quad {\rm (a)}\, If $n\equiv 0\pmod{4}$ then 
$$C_{\mathfrak{S}_n}(Q_n)=C_{\mathfrak{A}_n}(Q_n)=Z(Q_n)=\begin{cases}
Q_4,& \text{if } n=4\\
Z(P_n)=Z(P_{n_1})\times\cdots\times Z(P_{n_s}),&\text{if } n>4.
\end{cases}$$

\quad {\rm (b)}\, If $n\equiv 2\pmod{4}$ then $i_s=1$, and
$$C_{\mathfrak{S}_n}(Q_n)=Z(P_n)=Z(P_{n_1})\times\cdots\times Z(P_{n_s})
=Z(Q_n)\times P_2=C_{\mathfrak{A}_n}(Q_n)\times P_2.$$
\end{prop}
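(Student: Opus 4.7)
The plan is to exploit the orbit structure of $Q_n$ on $\{1,\ldots,n\}$, together with the explicit generators of $Q_n$ from \ref{noth:sylowSn}, to pin down $C_{\mathfrak{S}_n}(Q_n)$; then $Z(Q_n)$ and $C_{\mathfrak{A}_n}(Q_n)$ will drop out by intersection arguments. First I would verify that $Q_n$ has orbits precisely $\Omega_1,\ldots,\Omega_s$: when $s\geq 2$, the projection $P_n=\prod_j P_{n_j}\to P_{n_j}$ has a kernel containing an odd transposition $g_{1,k}$ (for any $k\neq j$), so the index-$2$ subgroup $Q_n$ still surjects onto each $P_{n_j}$, and transitivity of $P_{n_j}$ on $\Omega_j$ is inherited; when $s=1$ (i.e.\ $n=2^m$ with $m\geq 2$) transitivity of $Q_n$ on $\{1,\ldots,n\}$ follows by direct inspection of (\ref{equ:Qgens}). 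Since the orbit sizes $n_1>\ldots>n_s$ are pairwise distinct, each $\sigma\in C_{\mathfrak{S}_n}(Q_n)$ must preserve every $\Omega_j$ setwise and hence decomposes as $\sigma=\sigma_1\cdots\sigma_s$ with $\sigma_j\in\mathfrak{S}_{n_j}$.

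In the case $s\geq 2$ I would read off from (\ref{equ:Qgens2}) that commutation with $h_{k,j}=g_{k,j}$ for $k\geq 2$ forces $\sigma_j$ to commute with $g_{k,j}$, while commutation with the disjoint product $h_{1,j}=g_{1,s}g_{1,j}$ forces $\sigma_j$ to commute with $g_{1,j}$ \emph{and} $\sigma_s$ to commute with $g_{1,s}$ independently. Hence $\sigma_j\in C_{\mathfrak{S}_{n_j}}(P_{n_j})=Z(P_{n_j})$ for every $j$ (using the standard fact that $P_{n_j}$ is self-centralizing in $\mathfrak{S}_{n_j}$, with the convention $C_{\mathfrak{S}_2}(P_2)=P_2$). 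This yields $C_{\mathfrak{S}_n}(Q_n)\leq\prod_j Z(P_{n_j})=Z(P_n)$, and the reverse inclusion is immediate. In case (a) with $s\geq 2$, each $Z(P_{n_j})$ is generated by an even permutation since $n_j\geq 4$, so $Z(P_n)\leq Q_n$, whence $Z(P_n)\leq Z(Q_n)\leq C_{\mathfrak{S}_n}(Q_n)=Z(P_n)$, forcing $Z(Q_n)=Z(P_n)$.

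For $s=1$ (the pure power-of-two case) the orbit argument gives no information, so a different route is required. Transitivity of $Q_n$ makes $C_{\mathfrak{S}_n}(Q_n)$ act semi-regularly on $\{1,\ldots,n\}$, hence its order divides $2^m$ and it is a $2$-group. As a normal $2$-subgroup of $N_{\mathfrak{S}_n}(Q_n)$ it lies inside the Sylow $2$-subgroup $P_n$, giving $C_{\mathfrak{S}_n}(Q_n)=C_{P_n}(Q_n)$. Since $P_n=\langle Q_n,g_1\rangle$, any element of $P_n\setminus Q_n$ centralizing $Q_n$ would centralize all of $P_n$ and thus lie in $Z(P_n)$; but for $m\geq 2$ the generator of $Z(P_n)$ is the even permutation $(1,2)(3,4)\cdots(n-1,n)$, whereas $P_n\setminus Q_n$ consists of odd permutations. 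Hence $C_{P_n}(Q_n)\leq Q_n$, so $C_{P_n}(Q_n)=Z(Q_n)$. For $n=4$, $Q_4\cong V_4$ is abelian so $Z(Q_4)=Q_4$; for $n=2^m$ with $m\geq 3$ I would carry out the wreath-product computation in $P_{2^m}=P_{2^{m-1}}\wr C_2$, writing an element of $Z(Q_{2^m})$ as $(a,b;c)$: commutation with $g_m$ forces $a=b$; commutation with $(x,1;0)$ for a nontrivial $x\in Q_{2^{m-1}}$ (which exists since $m\geq 3$) forces $c=0$; and commutation with elements $(x,y;0)$ of both parities forces $a\in Z(P_{2^{m-1}})$. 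This yields $Z(Q_{2^m})=Z(P_{2^m})$.

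Finally, in case (b) I would compute $Z(Q_n)=Z(P_n)\cap Q_n$ by filtering parity: $Z(P_{n_s})=P_2$ is generated by the odd transposition $(n-1,n)$, while each $Z(P_{n_j})$ for $j<s$ is generated by an even permutation, so the intersection drops exactly the last factor, producing the internal direct product $Z(P_n)=Z(Q_n)\times P_2$. The statements involving $C_{\mathfrak{A}_n}(Q_n)$ then follow at once by intersecting $C_{\mathfrak{S}_n}(Q_n)$ with $\mathfrak{A}_n$. The main obstacle I expect is the power-of-two sub-case $n=2^m\geq 8$, in which the orbit decomposition is unavailable and the wreath-product analysis establishing $Z(Q_{2^m})=Z(P_{2^m})$ is the most technical ingredient.
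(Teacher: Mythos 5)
Your proof is correct, and it reaches all the stated equalities. It overlaps with the paper's proof in the transitive case $n=2^m$ (both reduce $C_{\mathfrak{S}_n}(Q_n)$ to $C_{P_n}(Q_n)$ by observing that the centralizer is a $2$-group normal in $N_{\mathfrak{S}_n}(Q_n)$, and both do a wreath-product computation in $P_{2^{m-1}}\wr C_2$), but it organizes the case $s\geq 2$ genuinely differently. The paper first proves $C_{\mathfrak{S}_n}(Q_n)=C_{P_n}(Q_n)$ in general, writes $x=x_1\cdots x_s$ with $x_j\in C_{P_{n_j}}(Q_{n_j})$, and then must separately exclude the exceptional elements in factors with $n_j=4$, where $C_{P_4}(Q_4)=Q_4\supsetneq Z(P_4)$; this exclusion is exactly where the diagonal generators $h_{1,j}=g_{1,s}g_{1,j}$ enter. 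You instead exploit those same generators systematically: since the orbits $\Omega_1,\ldots,\Omega_s$ have pairwise distinct sizes, any $\sigma\in C_{\mathfrak{S}_n}(Q_n)$ splits as $\sigma_1\cdots\sigma_s$, and the disjoint-support factorization of $\sigma h_{1,j}\sigma^{-1}=h_{1,j}$ forces $\sigma_j$ to centralize $g_{1,j}$ and $\sigma_s$ to centralize $g_{1,s}$ individually; hence each $\sigma_j$ centralizes all of $P_{n_j}$ and lies in $Z(P_{n_j})$ by self-centralization of Sylow subgroups of symmetric groups. This bypasses the reduction to $C_{P_n}(Q_n)$ and the $P_4$ case analysis entirely, at the price of invoking $C_{\mathfrak{S}_{n_j}}(P_{n_j})=Z(P_{n_j})$ (standard, and provable by the same semiregularity argument you use for $s=1$). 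Your $s=1$ computation is also non-inductive, using elements $(x,x;1)$ for arbitrary $x\in P_{2^{m-1}}$ where the paper inducts on $m$. One sentence is imprecisely phrased: the claim that an element $x\in P_n\smallsetminus Q_n$ centralizing $Q_n$ lies in $Z(P_n)$ follows because $P_n=\langle Q_n,x\rangle$ (index $2$) and $x$ centralizes both gener, not because $P_n=\langle Q_n,g_1\rangle$; the conclusion, and the ensuing parity contradiction with $Z(P_{2^m})=\langle(1,2)(3,4)\cdots(n-1,n)\rangle\leq\mathfrak{A}_n$, is nevertheless correct.
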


\begin{proof}
We may assume that $n\geq 4$. Then $\Omega_1,\ldots,\Omega_s$ 
are the orbits of $P_n$ on $\{1,\ldots,n\}$,
as well as the orbits of $Q_n$ on $\{1,\ldots,n\}$, 
where $\Omega_j$ is as above in \ref{noth:sylowSn}(c). 
Since $|\Omega_1|>\ldots >|\Omega_s|$, the
$Q_n$-sets $\Omega_1,\ldots,\Omega_s$ are pairwise
non-isomorphic. For $j=1,\ldots,s$, let $\omega_j\in\Omega_j$,
and set $R_j:=\Stab_{Q_n}(\omega_j)$. Then $\Omega_j$ is as
$Q_n$-set isomorphic to $Q_n/R_j$, and we have the following 
group isomorphism, see \cite[La. 4.3]{DK}: 
$$\prod_{j=1}^sN_{Q_n}(R_j)/R_j\longrightarrow C_{\mathfrak{S}_n}(Q_n).$$
In particular, $C_{\mathfrak{S}_n}(Q_n)$ is a $2$-group and, hence, so is
$Q_n C_{\mathfrak{S}_n}(Q_n)$.
Thus there is some $g\in\mathfrak{S}_n$ such that 
${}^g(Q_n C_{\mathfrak{S}_n}(Q_n))\leq P_n$.
In particular, we have ${}^g Q_n\leq P_n\cap\mathfrak{A}_n=Q_n$,
that is, $g\in N_{\mathfrak{S}_n}(Q_n)$. Hence 
we have $g\in N_{\mathfrak{S}_n}(C_{\mathfrak{S}_n}(Q_n))$ as well,
implying $C_{\mathfrak{S}_n}(Q_n)\leq P_n$, and thus
$C_{\mathfrak{S}_n}(Q_n)=C_{P_n}(Q_n)$.
So it suffices to show that
$$C_{P_n}(Q_n)=\begin{cases}
Q_4,&\text{if } n=4\\
Z(P_n),&\text{if } n\neq 4,
\end{cases}$$
since then we also get 
$$C_{\mathfrak{A}_n}(Q_n)=Z(Q_n)=\begin{cases}
Q_4,&\text{ if } n=4\\
Z(P_n),&\text{ if } 4<n\equiv 0\pmod{4}\\
Z(P_{n_1})\times\cdots\times Z(P_{n_{s-1}}),&\text{ if } n\equiv 2\pmod{4}.
\end{cases}$$ 

\medskip
The statement for $n=4$ is clear. Next suppose that $n=2^m$, 
for some $m\geq 3$. We argue with induction on $m$, and
show that $Z(P_n)=C_{P_n}(Q_n)$. 
For $m=3$ this is immediately checked to be true, 
so that we may now suppose that $m>3$.
We consider $P_n$ again as the wreath product 
$P_{2^{m-1}}\wr C_2
=\{(x_1,x_2;\sigma)\mid x_1,\,x_2\in P_{2^{m-1}},\, \sigma\in C_2\}$.
Let $x:=(x_1,x_2;\sigma)\in C_{P_n}(Q_n)$, so that, 
for each $y:=(y_1,y_2;\pi)\in Q_n$, we have
\begin{equation}\label{equ:cent}
(x_1y_{\sigma(1)},x_2y_{\sigma(2)};\sigma\pi)
=(x_1,x_2;\sigma)(y_1,y_2;\pi)
=(y_1,y_2;\pi)(x_1,x_2;\sigma)
=(y_1x_{\pi(1)},y_2x_{\pi(2)};\pi\sigma).
\end{equation}
Setting $y_1:=y_2:=1$ and $\pi:=(1,2)$, Equation (\ref{equ:cent}) 
yields $x_1=x_2$. Next we set $\pi:=1$, $y_1:=1$,
and $1\neq y_2\in Q_{2^{m-1}}$. Then (\ref{equ:cent}) this time 
implies $x_1y_{\sigma(1)}=x_1$ and $x_1y_{\sigma(2)}=y_2x_1$.
Therefore $\sigma=1$ and $x_1\in C_{P_{2^{m-1}}}(Q_{2^{m-1}})$. 
Thus, by induction, 
$x_1\in Z(P_{2^{m-1}})$. Consequently, $x=(x_1,x_1;1)\in Z(P_n)$,
and we have $C_{P_n}(Q_n)=Z(P_n)\leq Q_n$. 

\medskip
Now let $n>4$ with $s\geq 2$. We show that also in this case 
$C_{P_n}(Q_n)=Z(P_n)=Z(P_{n_1})\times\cdots\times Z(P_{n_s}).$
For this, let $x\in C_{P_n}(Q_n)$, and write $x=x_1\cdots x_s$ 
for appropriate $x_j\in P_{n_j}$ and $j=1,\ldots,s$. Since
$Q_{n_1}\times\cdots\times Q_{n_j}\leq Q_n$, we deduce 
that $x_j\in C_{P_{n_j}}(Q_{n_j})$, for $j=1,\ldots,s$. 
Hence, by what we have just proved above, $x_j\in Z(P_{n_j})$ if $i_j>2$. 
Moreover, $x_j\in Z(Q_4)=Q_4$ if $i_j=2$, and clearly 
$x_j\in Z(P_2)=P_2$ if $i_j=1$. 
Suppose that there is some $j\in\{1,\ldots,s\}$ with $i_j=2$. 
Then $j\in\{s-1,s\}$. We need to show that $x_j\in Z(P_4)$.
Assume that this is not the case. In the notation of 
\ref{noth:sylowSn}, we may then suppose that $x_j=h_{i_j,j}=g_{2,j}$.
But this leads to the contradiction 
$x(g_{1,j}g_{1,s})x^{-1}=x_jg_{1,j}x_j\cdot g_{1,s}\neq g_{1,j}g_{1,s}$
if $j=s-1$, and to the contradiction 
$x(g_{1,1}g_{1,j})x^{-1}=g_{1,1}\cdot x_jg_{1,j}x_j\neq g_{1,1}g_{1,j}$ 
if $j=s$. Thus also $x_j\in Z(P_4)$, and we have shown that
$Z(P_n)\leq C_{P_n}(Q_n)\leq Z(P_{n_1})\times\cdots\times 
Z(P_{n_s})=Z(P_n)$.
\end{proof}

\begin{noth}\label{noth:fix}
\normalfont
{\sl The $2$-Blocks of $\mathfrak{A}_n$.}\, 
(a) Recall from \cite[6.1.21]{JK}
that each block $B$ of $F\mathfrak{S}_n$ can be
labelled combinatorially by some integer $w\geq 0$ and a $2$-regular
partition $\kappa$ of $n-2w$. We call $w$ the \textit{$2$-weight} of $B$, and
$\kappa$ the \textit{$2$-core} of $B$.
Moreover, by \cite[Thm. 6.2.39]{JK}, the defect groups of $B$ are in 
$\mathfrak{S}_n$ conjugate 
to $P_{2w}\leq\mathfrak{S}_{2w}\leq\mathfrak{S}_n$.

\medskip
(b) 
The following relationships 
between blocks of $F\mathfrak{S}_n$ and 
$F\mathfrak{A}_n$ are well known; see for instance \cite{O1}:
for any partition $\lambda$ of $n$, we denote its conjugate 
partition by $\lambda'$. That is, the Young diagram $[\lambda']$ of
$\lambda'$ is obtained by transposing the Young diagram $[\lambda]$.

\medskip
Suppose now that $B$ is a block of $F\mathfrak{S}_n$ of weight $w$ and 
with $2$-core $\kappa$. Denote the corresponding block idempotent 
of $F\mathfrak{S}_n$ by $e_B$. The $2$-core
$\kappa$ is a triangular partition, so that
$\kappa=\kappa'$. If $w\geq 1$ then
$e_B$ is a block idempotent of $F\mathfrak{A}_n$, and if $w=0$ then 
$e_B=e_{B'}+ e_{B''}$ for $\mathfrak{S}_n$-conjugate blocks $B'\neq B''$ 
of $F\mathfrak{A}_n$ of defect 0; note that for $w=1$ this also
yields a (single) block of $F\mathfrak{A}_n$ of defect 0.
Hence, the \textit{weight} of any block of $F\mathfrak{A}_n$ 
is understood to be the weight $w$ of the covering block of $F\mathfrak{S}_n$,
and the associated defect groups are in $\mathfrak{A}_n$ conjugate 
to $Q_{2w}\leq\mathfrak{A}_{2w}\leq\mathfrak{A}_n$.

\medskip
(c)
Let $B$ be a block of $F\mathfrak{A}_n$
of weight $w\geq 0$. Let further $(Q,b_Q)$ be a
self-centralizing Brauer $B$-pair. 
Then, by \cite[Thm. 5.5.21]{NT}, there
is a defect group $P$ of $B$ such that
$Z(P)\leq C_P(Q)\leq Q\leq P$.
Replacing $(Q,b_Q)$ by a suitable $\mathfrak{A}_n$-conjugate,
we may assume that $P=Q_{2w}$. 
If $w$ is even then, by Proposition \ref{prop:centre},
$Z(Q_{2w})$ has no fixed points on
$\{1,\ldots,2w\}$, hence, in this case, 
$Q$ acts fixed point freely on precisely $2w$ points;
note that this also holds for $w=0$.
If 
$w$ is odd then, by Proposition \ref{prop:centre}
again, $Z(Q_{2w})$ has exactly the two fixed points 
$\{2w-1,2w\}$ on $\{1,\ldots,2w\}$,
hence, in this case $Q$ acts fixed point freely precisely 
on either $\{1,\ldots,2w-2\}$ or $\{1,\ldots,2w\}$, that is,
on $2x$ points, where $w-1\leq x\leq w$;
note that for $w=1$ we have $x=0$. 

\medskip
The following example shows that even if we restrict ourselves
to Brauer pairs arising from vertices of simple modules
we have to deal with both cases $w-1\leq x\leq w$:
\end{noth}

\begin{expl}\label{expl:fix}
\normalfont
Suppose that $n=2m$, for some odd integer $m\geq 3$, 
and consider the simple $F\mathfrak{S}_n$-module $D^{(m+1,m-1)}$
labelled by the partition $(m+1,m-1)$ of $n$. 
This is the {\it basic spin} $F\mathfrak{S}_n$-module, belonging
to the principal block of $F\mathfrak{S}_n$, which has
weight $m$. Since $n\equiv 2\pmod{4}$, the restriction
$\res_{\mathfrak{A}_n}^{\mathfrak{S}_n}(D^{(m+1,m-1)})=:E^{(m+1,m-1)}$ 
is simple, by \cite{Ben}, and thus belongs to the principal block $B_0$ of 
$F\mathfrak{A}_n$.
By \cite[Thm. 7.2]{DKspin}, $E^{(m+1,m-1)}$ has common vertices with the
basic spin $F\mathfrak{S}_{n-1}$-module $D^{(m,m-1)}$. Therefore, the vertices 
of $E^{(m+1,m-1)}$ are conjugate to subgroups of $Q_{n-2}$ 
and have, in particular, fixed points on $\{1,\ldots,n\}$, 
while $Q_n$ acts of course fixed point freely. 
This shows that there is indeed a self-centralizing Brauer
$B_0$-pair $(Q,b_Q)$ of $\mathfrak{A}_n$,
where $Q$ arises as a vertex of a simple $F\mathfrak{A}_n$-module and
such that $Q$ has strictly more fixed points on $\{1,\ldots,n\}$ than the 
associated defect group $Q_n$ of its Brauer correspondent 
$b_Q^{\mathfrak{A}_n}=B_0$.
\end{expl}

\medskip
The next theorem is motivated by the results of \cite{DK},
where the self-centralizing Brauer pairs of the symmetric groups
are examined, for which, using the above notation, we 
necessarily have $x=w$. We pursue the analogy to the case
of the symmetric groups as far as possible, the
treatment being reminiscent of the exposition in \cite[Sect. 1]{O}. 

\begin{thm}\label{thm:self}
Let $(Q,b_Q)$ be a self-centralizing Brauer pair of $\mathfrak{A}_n$. 

\quad {\rm (a)}\,
Let $\Omega\subseteq\{1,\ldots,n\}$ be such that
$Q$ acts fixed point freely on $\Omega$ and fixes
$\{1,\ldots,n\}\smallsetminus\Omega$ pointwise. 
Then we have $C_{\mathfrak{A}(\Omega)}(Q)=Z(Q)$.

\quad {\rm (b)}\,
Let $P$ be a defect group of the Brauer correspondent 
$B:=b_Q^{\mathfrak{A}_n}$ of $b_Q$ in $\mathfrak{A}_n$
such that $C_P(Q)\leq Q\leq P$,
and let $\hat{\Omega}\subseteq\{1,\ldots,n\}$ be such that
$P$ acts fixed point freely on $\hat{\Omega}$ and fixes
$\{1,\ldots,n\}\smallsetminus\hat{\Omega}$ pointwise.
Then we even have $C_{\mathfrak{A}(\hat{\Omega})}(Q)=Z(Q)$.
\end{thm}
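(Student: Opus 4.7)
The strategy is to reduce to the already-known analog for the symmetric groups by invoking Lemma \ref{lemma:brauercov}, and to exploit the detailed structure of centralizers of $p$-subgroups of $\mathfrak{S}_n$ (via the wreath-product description underlying Proposition \ref{prop:centre} and \cite[La.~4.3]{DK}).

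\textbf{Part (a).} The inclusion $Z(Q)\subseteq C_{\mathfrak{A}(\Omega)}(Q)$ is immediate from $Z(Q)\leq Q\leq\mathfrak{A}(\Omega)$. For the reverse, I apply Lemma \ref{lemma:brauercov} to $\mathfrak{A}_n\unlhd\mathfrak{S}_n$ to obtain a Brauer pair $(Q,c_Q)$ of $\mathfrak{S}_n$ such that $c_Q$ covers $b_Q$ and $c_Q^{\mathfrak{S}_n}$ covers $B=b_Q^{\mathfrak{A}_n}$. A defect group $D$ of $c_Q$ satisfies $D\cap QC_{\mathfrak{A}_n}(Q)=Q$ up to conjugation, and since $[QC_{\mathfrak{S}_n}(Q):QC_{\mathfrak{A}_n}(Q)]\leq 2$, one has $|D\mathbin{:}Q|\in\{1,2\}$. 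In the easy case $D=Q$, the pair $(Q,c_Q)$ is self-centralizing in $\mathfrak{S}_n$, so the symmetric-group analog (implicit in the proof of Theorem \ref{thm:vtxbnd}(a), cf.\ \cite{DK}) gives $C_{\mathfrak{S}(\Omega)}(Q)=Z(Q)$, and consequently $C_{\mathfrak{A}(\Omega)}(Q)\leq C_{\mathfrak{S}(\Omega)}(Q)=Z(Q)$.

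The main obstacle is the case $|D\mathbin{:}Q|=2$, in which $(Q,c_Q)$ is not self-centralizing in $\mathfrak{S}_n$: here $D=Q\langle t\rangle$ for an odd permutation $t\in QC_{\mathfrak{S}_n}(Q)\setminus QC_{\mathfrak{A}_n}(Q)$ with $t^2\in Q$. One then applies the symmetric-group analog to the enlarged self-centralizing pair $(D,c_D)$, obtaining $C_{\mathfrak{S}(\Omega_D)}(D)=Z(D)$, where $\Omega_D\supseteq\Omega$ is the support of $D$. A parity analysis based on the wreath-product decomposition of $C_{\mathfrak{S}_n}(Q)$ over $Q$-orbit types on $\Omega$ then shows that any element of $C_{\mathfrak{S}(\Omega)}(Q)\setminus Z(Q)$ coming from the extension by $t$ must act as an odd permutation on $\Omega$, so that it cannot contribute to $C_{\mathfrak{A}(\Omega)}(Q)$. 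This careful bookkeeping of parities is the technical heart of the argument.

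\textbf{Part (b).} The conclusion is strictly stronger than (a), since $\mathfrak{A}(\Omega)\subseteq\mathfrak{A}(\hat\Omega)$. For $x\in C_{\mathfrak{A}(\hat\Omega)}(Q)$, decompose $x=x_2x_{2'}$ into its $2$-part and $2'$-part; both centralize $Q$ and lie in $\mathfrak{A}(\hat\Omega)$. For $x_2$: standard Brauer pair theory extends $(Q,b_Q)$ to a Brauer $B$-pair with first component $\langle Q,x_2\rangle$, which is further contained in a maximal Brauer $B$-pair whose first component is a defect group of $B$; since $x_2\in\mathfrak{A}(\hat\Omega)$ implies that the support of this defect group lies in $\hat\Omega$, after replacing $P$ by this conjugate one has $x_2\in C_P(Q)\leq Q$, and since $x_2$ centralizes $Q$, also $x_2\in Z(Q)$. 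The remaining and harder task is to show $x_{2'}=1$: here one exploits that $x_{2'}$ permutes the $Q$-orbits of $\hat\Omega$ while preserving their isomorphism types, and combines this with the self-centralizing condition $C_P(Q)=Z(Q)$ (which follows from the hypothesis $C_P(Q)\leq Q$) and the orbit-type structure implicit in Proposition \ref{prop:centre} to force $x_{2'}$ to fix each orbit pointwise, whence $x_{2'}=1$.
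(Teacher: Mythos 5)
Your strategy diverges from the paper's in both parts, and in both parts the step you label as "the technical heart" is exactly where the argument breaks down. In part (a), the real content of the statement is that $C_{\mathfrak{A}(\Omega)}(Q)$ is a $2$-group: a nontrivial element of \emph{odd} order centralizing $Q$ and supported on $\Omega$ is automatically an even permutation, so no "parity analysis" can exclude it. Your reduction handles only the case where the covering block $c_Q$ of $F[QC_{\mathfrak{S}_n}(Q)]$ still has defect group $Q$; in the case $|D:Q|=2$, applying the $\mathfrak{S}_n$-result to $(D,c_D)$ yields $C_{\mathfrak{S}(\Omega_D)}(D)=Z(D)$, which constrains the centralizer of the \emph{larger} group $D$ and gives no control over $C_{\mathfrak{S}(\Omega)}(Q)\supseteq C_{\mathfrak{S}(\Omega_D)}(D)\cap\mathfrak{S}(\Omega)$. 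The paper instead argues directly inside $QC_{\mathfrak{A}_{2x}}(Q)$: it has a unique block (by Olsson's \cite[Prop.~1.2, 1.3]{O} plus covering), $Q$ is a Sylow $2$-subgroup of it (by Fong), and then an element $g\neq 1$ of odd order in $C_{\mathfrak{A}_{2x}}(Q)$ would produce two conjugacy classes with Sylow defect group and hence two blocks of maximal defect by \cite[Cor.~IV.4.17]{Feit} --- a contradiction. Some such block-counting input is unavoidable, and your sketch contains nothing that plays its role.

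Part (b) cannot be proved by the combinatorial route you propose, because the conclusion is \emph{false} at the level of generality you actually use (a $2$-group $P$ containing $Q$ with $C_P(Q)\leq Q$, fixed point free on $\hat{\Omega}$). For example, take $Q=\langle(1,2)(3,4),(3,4)(5,6)\rangle$ and $P=\langle Q,(1,3)(2,4)(7,8)\rangle\leq\mathfrak{A}_8$: then $C_P(Q)=Q=Z(Q)$, $C_{\mathfrak{A}_6}(Q)=Z(Q)$, $P$ is fixed point free on $\hat{\Omega}=\{1,\ldots,8\}$, yet $(1,2)(7,8)$ centralizes $Q$, lies in $\mathfrak{A}(\hat{\Omega})$, and is not in $Z(Q)$. (Of course this $P$ is not a defect group of $b_Q^{\mathfrak{A}_8}$ --- which is precisely the point.) So the hypothesis that $P$ is a defect group of the Brauer correspondent $B$ must enter essentially. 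The paper's proof of (b) assumes $x=w-1$ and shows $QC_{\mathfrak{A}_n}(Q)=QC_{\mathfrak{A}_{2x}}(Q)\times\mathfrak{A}_{n-2x}$ by a weight computation: if not, Fong's theorem forces the covered block $b_0\otimes b_1$ to be non-invariant, hence $\tilde{b}_1$ has weight $0$, hence by \cite[Thm.~1.7]{O} the Brauer correspondent of $\tilde{b}_Q$ in $\mathfrak{S}_n$ has weight $x<w$, contradicting Lemma \ref{lemma:brauercov}. Your treatment of the $2'$-part $x_{2'}$ never invokes $B$, and your treatment of the $2$-part quietly replaces $P$ by a conjugate defect group whose support need not be $\hat{\Omega}$ and which need not satisfy $C_P(Q)\leq Q$. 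Both parts of the proposal therefore have genuine gaps rather than fixable imprecisions.
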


\begin{proof}
Since $(Q,b_Q)$ is self-centralizing, 
the block $b_Q$ of $F[QC_{\mathfrak{A}_n}(Q)]$ has defect group $Q$.
Let $w\geq 0$ be the weight of $B=b_Q^{\mathfrak{A}_n}$.
By the observations made in \ref{noth:fix}, 
we have $2w-2\leq 2x=|\Omega|\leq 2w=|\hat{\Omega}|$, 
and we may suppose that $\Omega=\{1,\ldots,2x\}$ and 
$\hat{\Omega}=\{1,\ldots,2w\}$, that is,
$Q\leq\mathfrak{A}_{2x}\leq\mathfrak{A}_{2w}$ and
$P=Q_{2w}\leq\mathfrak{A}_{2w}$.
We have 
$C_{\mathfrak{S}_n}(Q)=C_{\mathfrak{S}_{2x}}(Q)\times\mathfrak{S}_{n-2x}$, 
and thus also 
$QC_{\mathfrak{S}_n}(Q)=QC_{\mathfrak{S}_{2x}}(Q)\times\mathfrak{S}_{n-2x}$. 
Consider the following chain of normal subgroups
\begin{equation}\label{equ:chn}
QC_{\mathfrak{A}_{2x}}(Q)\times\mathfrak{A}_{n-2x}
  \unlhd QC_{\mathfrak{A}_n}(Q)\unlhd QC_{\mathfrak{S}_n}(Q)
  =QC_{\mathfrak{S}_{2x}}(Q)\times\mathfrak{S}_{n-2x}.
\end{equation}

\medskip
Since $|QC_{\mathfrak{S}_n}(Q):QC_{\mathfrak{A}_n}(Q)|\leq 2$, 
by \cite[Cor. 5.5.6]{NT} there is a unique block $\tilde{b}_Q$ of
$F[QC_{\mathfrak{S}_n}(Q)]$ covering $b_Q$. 
In particular, $(Q,\tilde{b}_Q)$ is a (not necessarily self-centralizing)
Brauer pair of $\mathfrak{S}_n$. 
We may write
$\tilde{b}_Q=\tilde{b}_0\otimes\tilde{b}_1$, for some block $\tilde{b}_0$
of $F[QC_{\mathfrak{S}_{2x}}(Q)]$ and some block $\tilde{b}_1$
of $F\mathfrak{S}_{n-2x}$. Since $Q$ has no fixed points on $\{1,\ldots,2x\}$,
by \cite[Prop. 1.2, Prop. 1.3]{O} we conclude that
$F[QC_{\mathfrak{S}_{2x}}(Q)]$ has only one block,
that is, the principal one.
Therefore, each block of $F[QC_{\mathfrak{A}_{2x}}(Q)]$ is
covered by the principal block
$\tilde{b}_0$ of $F[QC_{\mathfrak{S}_{2x}}(Q)]$. Hence all blocks of
$F[QC_{\mathfrak{A}_{2x}}(Q)]$ are conjugate in $QC_{\mathfrak{S}_{2x}}(Q)$.
But then also $F[QC_{\mathfrak{A}_{2x}}(Q)]$ has only one block,
that is, the principal block $b_0$.

\medskip
Moreover, $b_Q$ covers some block $b_0\otimes b_1$ of 
$F[QC_{\mathfrak{A}_{2x}}(Q)\times\mathfrak{A}_{n-2x}]$,
where $b_1$ is a block of $F\mathfrak{A}_{n-2x}$,
and, by \cite[Cor. 5.5.6]{NT} again, $b_Q$ is the
unique block of $F[QC_{\mathfrak{A}_n}(Q)]$ covering $b_0\otimes b_1$. 
The defect groups of $b_0\otimes b_1$ are in $QC_{\mathfrak{A}_n}(Q)$ 
conjugate to subgroups of $Q$, by Fong's Theorem \cite[Cor. 5.5.16]{NT}.
Hence $b_1$ has to be a block of defect 0. Thus, since $\tilde{b}_1$ 
covers $b_1$ we infer from \ref{noth:fix} that $\tilde{b}_1$
is a block of weight $\tilde{w}=0$ or $\tilde{w}=1$, that is, of defect
$0$ or $1$, respectively.
Moreover, since $b_0$ is the principal block of 
$F[QC_{\mathfrak{A}_{2x}}(Q)]$,
we infer that $Q\in\syl_2(QC_{\mathfrak{A}_{2x}}(Q))$.

\medskip
Thus we may summarize the properties of the relevant blocks 
of the subgroups in (\ref{equ:chn}) as follows:
$$\begin{array}{|rcl|c|rcrcl|}
\hline
b_0 & \otimes & b_1 &
b_Q &
\tilde{b}_Q & = & \tilde{b}_0 & \otimes & \tilde{b}_1 \\
\hline
\text{defect }Q &&\text{defect }0& \text{defect } Q &&&&& 
\text{weight }\tilde{w}\in\{0,1\} \\
\text{principal} &&&&&& \text{principal} && \\
\hline
\end{array}$$

\medskip
To show (a), assume that 
$QC_{\mathfrak{A}_{2x}}(Q)$ is not a $2$-group, so that there is
some $1\neq g\in QC_{\mathfrak{A}_{2x}}(Q)$ of odd order. Thus 
$g\in C_{\mathfrak{A}_{2x}}(Q)$, and we denote the 
conjugacy class of $g$ in  $QC_{\mathfrak{A}_{2x}}(Q)$ by $C$. 
Since $Q\in\syl_2(QC_{\mathfrak{A}_{2x}}(Q))$, we also have
$Q\in \syl_2(C_{QC_{\mathfrak{A}_{2x}}(Q)}(g))$. In particular, 
$Q$ is a defect group of the conjugacy class $C$ and of the 
conjugacy class $\{1\}\neq C$. Hence, from \cite[Cor. IV.4.17]{Feit}
we infer that $F[QC_{\mathfrak{A}_{2x}}(Q)]$ has two blocks of maximal defect, 
contradicting the fact that the principal block is the only 
block of $F[QC_{\mathfrak{A}_{2x}}(Q)]$. 
Hence $QC_{\mathfrak{A}_{2x}}(Q)$ is a $2$-group, and
from this we finally deduce that $QC_{\mathfrak{A}_{2x}}(Q)=Q$,
that is, $C_{\mathfrak{A}_{2x}}(Q)=Z(Q)$, proving (a).

\medskip
To show (b) we may suppose that $x=w-1$, hence we have $n\geq 2x+2$. 
We show that in this case we have
$$ QC_{\mathfrak{A}_{n}}(Q)
  =QC_{\mathfrak{A}_{2x}}(Q)\times\mathfrak{A}_{n-2x},$$
from which we get $C_{\mathfrak{A}_{2w}}(Q)=QC_{\mathfrak{A}_{2x}}(Q)=Z(Q)$.
So assume, for a contradiction, that 
$QC_{\mathfrak{A}_{2x}}(Q)\times\mathfrak{A}_{n-2x}<QC_{\mathfrak{A}_{n}}(Q)$.
We consider again the chain of subgroups (\ref{equ:chn}), where 
$[QC_{\mathfrak{S}_{n}}(Q):
 (QC_{\mathfrak{A}_{2x}}(Q)\times\mathfrak{A}_{n-2x})]\leq 4$.
Since $QC_{\mathfrak{A}_{n}}(Q)\leq\mathfrak{A}_{n}$ but
$\mathfrak{S}_{n-2x}\not\leq\mathfrak{A}_{n}$, we get
$[QC_{\mathfrak{S}_{n}}(Q):QC_{\mathfrak{A}_{n}}(Q)]=2$, and hence 
$[QC_{\mathfrak{A}_{n}}(Q):
  (QC_{\mathfrak{A}_{2x}}(Q)\times\mathfrak{A}_{n-2x})]=2$.

\medskip
Since $b_Q$ has defect group $Q$, from
Fong's Theorem \cite[Thm. 5.5.16]{NT}
we conclude that the inertial group of $b_0\otimes b_1$ is given as
$T_{QC_{\mathfrak{A}_{n}}(Q)}(b_0\otimes b_1)
 =QC_{\mathfrak{A}_{2x}}(Q)\times\mathfrak{A}_{n-2x}$.
Thus $b_0\otimes b_1$ is not $QC_{\mathfrak{A}_{n}}(Q)$-invariant,
hence is not $QC_{\mathfrak{S}_{n}}(Q)$-invariant either.
Since $b_0$ is the principal block of $F[QC_{\mathfrak{A}_{2x}}(Q)]$,
this implies that $b_1$ is not $\mathfrak{S}_{n-2x}$-invariant,
from which, by \ref{noth:fix}, we infer that $\tilde{b}_1$ 
has weight $\tilde{w}=0$, that is, $\tilde{b}_1$ has defect $0$.

\medskip
Let $\tilde{B}:=\tilde{b}_Q^{\mathfrak{S}_{n}}$ be the 
Brauer correspondent of 
$\tilde{b}_Q=\tilde{b}_0\otimes\tilde{b}_1$ in $\mathfrak{S}_{n}$.
Since $\tilde{b}_1$ has weight $\tilde{w}=0$
and $Q$ acts fixed point freely on the set $\Omega$ of cardinality $2x$, 
we conclude from \cite[Thm. 1.7]{O} that $\tilde{B}$ is a block 
of weight $x$. But, by \ref{noth:fix}, the block of $F\mathfrak{S}_{n}$ covering $B$, 
has weight $w$, contradicting Lemma \ref{lemma:brauercov}.
Thus we have 
$QC_{\mathfrak{A}_{2x}}(Q)\times\mathfrak{A}_{n-2x}=QC_{\mathfrak{A}_{n}}(Q)$,
proving (b).
\end{proof}

\begin{rem}\label{rem:self}
\normalfont
A closer analysis of the arguments in the proof of 
Theorem \ref{thm:self} yields a somewhat more precise
description of the self-centralizing Brauer pairs of $\mathfrak{A}_n$,
which at least helps to exclude certain subgroups from being
the first component of a self-centralizing Brauer pair.

\medskip
It turns out that there are only the cases listed below, 
all of which, by Example \ref{expl:cases}, actually occur.
We give the results without proofs, since we do not use these facts later on.
Note that, by Brauer's Third Main Theorem \cite[Thm. 5.6.1]{NT},
$b_Q$ is the principal block if and only if its
Brauer correspondent $b_Q^{\mathfrak{A}_n}$ is, 
which in turn is equivalent to $n\in\{2w,2w+1\}$.

\medskip
(a) Let first $n\in\{2x,2x+1\}$, thus we have $x=w$, and
hence $QC_{\mathfrak{A}_{n}}(Q)=QC_{\mathfrak{A}_{2x}}(Q)$ anyway.
Moreover, $b_Q$ is principal, hence, in particular,
is $QC_{\mathfrak{S}_{n}}(Q)$-invariant. 
It turns out that always $\tilde{w}=0$, and that both cases
$d:=[C_{\mathfrak{S}_{2x}}(Q):C_{\mathfrak{A}_{2x}}(Q)]\in\{1,2\}$ occur.

\medskip
(b) Now let $n\geq 2x+2$. Then it turns out that $w=x+\tilde{w}$ and that
$$ d:=[C_{\mathfrak{S}_{2x}}(Q):C_{\mathfrak{A}_{2x}}(Q)]
     =[QC_{\mathfrak{A}_{n}}(Q):
      (QC_{\mathfrak{A}_{2x}}(Q)\times\mathfrak{A}_{n-2x})]\in\{1,2\} .$$
Moreover, only the following cases occur:
\begin{center}
\begin{tabular}{|l|l|l|l|l|}
\hline
$w$ odd  & $d=1$ & $\tilde{w}=1$ 
         & $b_Q$ is $QC_{\mathfrak{S}_{n}}(Q)$-invariant \\
         & $d=2$ & $\tilde{w}=0$ 
         & $b_Q$ is $QC_{\mathfrak{S}_{n}}(Q)$-invariant \\
\hline
$w$ even & $d=1$ & $\tilde{w}=0$
         & $b_Q$ is not $QC_{\mathfrak{S}_{n}}(Q)$-invariant \\
         & $d=2$ & $\tilde{w}=0$
         & $b_Q$ is $QC_{\mathfrak{S}_{n}}(Q)$-invariant \\
\hline
\end{tabular}
\end{center}
Note that $b_Q$ is principal if and only if 
$n\in\{2x+2,2x+3\}$ where $x$ is even and $d=1$.
\end{rem}

\medskip
We are now prepared to prove the bound given in Theorem \ref{thm:vtxbnd}.

\begin{prop}\label{prop:boundAn}
Let $B$ be a block of $F\mathfrak{A}_n$
with defect group $P$. Let further 
$(Q,b_Q)$ be a self-centralizing Brauer $B$-pair. Then we have
$$|P|\leq \frac{(|Q|+2)!}{2}.$$
\end{prop}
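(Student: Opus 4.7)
To prove $|P|\leq(|Q|+2)!/2$, I would first reduce to a bound on the weight $w$ of $B$. By \ref{noth:fix}(b), the defect group $P$ is $\mathfrak{A}_n$-conjugate to $Q_{2w}$, so for $w\geq 2$ we have $|P|=|Q_{2w}|=|P_{2w}|/2$, and $|P_{2w}|$ divides $(2w)!$, giving $|P|\leq (2w)!/2$ (the cases $w\leq 1$ being trivial since the block is then of defect zero). Consequently the proposition reduces to showing $2w\leq|Q|+2$. Combining \ref{noth:fix}(c) with Theorem~\ref{thm:self}(a) yields a subset $\Omega\subseteq\{1,\ldots,n\}$ of cardinality $|\Omega|=2x$ with $x\in\{w-1,w\}$ on which $Q$ acts fixed-point-freely (and fixes the complement pointwise), and which satisfies $C_{\mathfrak{A}(\Omega)}(Q)=Z(Q)$. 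Since $2w\leq|\Omega|+2$, it suffices to establish the following \emph{key inequality}: if a $2$-subgroup $Q$ of $\mathfrak{A}(\Omega)$ acts fixed-point-freely on $\Omega$ and satisfies $C_{\mathfrak{A}(\Omega)}(Q)=Z(Q)$, then $|\Omega|\leq|Q|$.

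To attack the key inequality, I would transfer the situation to the symmetric group. Set $C:=C_{\mathfrak{S}(\Omega)}(Q)$. Any element of $C$ of odd order is an even permutation and thus lies in $C\cap\mathfrak{A}(\Omega)=Z(Q)$, a $2$-group; hence $C$ is itself a $2$-group, and the sign map gives $|C|\leq 2|Z(Q)|$. If $C=Z(Q)$, then $Q$ is self-centralizing in $\mathfrak{S}(\Omega)$, and the key combinatorial lemma underlying \cite[Thm.~5.1]{DK}---a fixed-point-free $2$-subgroup $R$ of $\mathfrak{S}_m$ with $C_{\mathfrak{S}_m}(R)=Z(R)$ satisfies $m\leq|R|$---immediately yields $|\Omega|\leq|Q|$. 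Otherwise $[C:Z(Q)]=2$, and choosing $s\in C\setminus Z(Q)$ produces a self-centralizing $2$-subgroup $\tilde Q:=Q\cdot\langle s\rangle$ of $\mathfrak{S}(\Omega)$ of order $2|Q|$; however, the same symmetric-group lemma applied to $\tilde Q$ only yields $|\Omega|\leq 2|Q|$, which is too weak by a factor of two.

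The principal obstacle is therefore to sharpen the bound to $|\Omega|\leq|Q|$ in the case $[C:Z(Q)]=2$. I would address this by dissecting the $Q$-orbit decomposition $\Omega=\bigsqcup_i\Omega_i$ with point-stabilizers $H_i<Q$ and exploiting the standard formula
\[
C=\prod_{[H]}(N_Q(H)/H)\wr\mathfrak{S}_{r_{[H]}},
\]
in which the product ranges over $Q$-conjugacy classes of stabilizers with multiplicities $r_{[H]}$. The tight constraint $|C|=2|Z(Q)|$, together with the fact that $|N_Q(H)/H|\geq 2$ for any proper subgroup $H$ of a $2$-group, severely restricts the orbit structure: it should force each multiplicity $r_{[H]}$ to equal $1$ (so that $C$ reduces to $\prod_i N_Q(H_i)/H_i$ without any non-trivial symmetric wreath factor) and confine each $|N_Q(H_i)/H_i|$ to a small value. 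Combined with the fact that the image of $Z(Q)$ in $\prod_i N_Q(H_i)/H_i$ has index at most two, a careful bookkeeping of orbit sizes $|\Omega_i|=[Q:H_i]$ should then yield $|\Omega|=\sum_i[Q:H_i]\leq|Q|$, completing the argument.
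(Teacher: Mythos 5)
Your reduction to the inequality $2w\leq|Q|+2$ is correct, and your use of \ref{noth:fix} and Theorem \ref{thm:self} to produce the set $\Omega$ with $C_{\mathfrak{A}(\Omega)}(Q)=Z(Q)$ matches the paper. However, the ``key inequality'' you reduce to --- that a fixed-point-free $2$-subgroup $Q\leq\mathfrak{A}(\Omega)$ with $C_{\mathfrak{A}(\Omega)}(Q)=Z(Q)$ satisfies $|\Omega|\leq|Q|$ --- is \emph{false}, and the configurations where it fails are exactly the reason the bound in the statement is $(|Q|+2)!/2$ rather than $|Q|!/2$. Concretely, take $\Omega=\{1,\ldots,6\}$ and $Q=\langle(1,2)(3,4),(3,4)(5,6)\rangle\cong V_4$: the three orbits $\{1,2\},\{3,4\},\{5,6\}$ have pairwise distinct stabilizers, $C_{\mathfrak{S}_6}(Q)=\langle(1,2)\rangle\times\langle(3,4)\rangle\times\langle(5,6)\rangle$ has order $8$, and its intersection with $\mathfrak{A}_6$ is exactly $Q=Z(Q)$, yet $|\Omega|=6>4=|Q|$. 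Likewise $Q=\langle(1,3,2,4)(5,6)\rangle\cong C_4$ with orbits of sizes $4$ and $2$ gives $|\Omega|=|Q|+2$. Both occur for the principal block of $\mathfrak{A}_6$ (see Example \ref{expl:cases}), so no amount of bookkeeping in your final paragraph can close the case $[C:Z(Q)]=2$; the conclusion you are aiming for there is simply not true.

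The correct statement, which is what the paper proves by the orbit analysis you sketch, is $|\Omega|\leq|Q|+2$, with equality only in the two exceptional configurations above ($Q$ cyclic with one regular orbit and one orbit of size $2$, or $Q\cong V_4$ with three orbits of size $2$). This creates a second issue your proposal does not address: if $|\Omega|=2x=|Q|+2$ and $x=w-1$, then $2w=|Q|+4$ and the desired bound fails. One must therefore verify that in the exceptional cases $x=w$. The paper does this by a parity argument: there $|Q|\geq4$, so $2x=|Q|+2\equiv2\pmod 4$, hence $x$ is odd; but by \ref{noth:fix}(c) the case $x=w-1$ can only occur when $w$ is odd, i.e.\ when $x$ is even. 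So $x=w$, giving $2w=|Q|+2$ and hence $|P|\leq(2w)!/2\leq(|Q|+2)!/2$. In summary: your frame (reduce to bounding $2w$, pass to the orbit decomposition, exploit $C_{\mathfrak{A}(\Omega)}(Q)=Z(Q)$) is the right one, but you must weaken the target to $|\Omega|\leq|Q|+2$, classify the extremal cases explicitly, and supply the parity argument pinning down $x=w$ in those cases.
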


\begin{proof}
Let $w$ be the weight of $B$.
By Theorem \ref{thm:self}, there is a subset $\Omega$ of $\{1,\ldots,n\}$ 
with $2w-2\leq 2x=|\Omega|\leq 2w$,
where $x$ is as in \ref{noth:fix},
such that $Q$ acts fixed point freely on
$\Omega$ and fixes $\{1,\ldots,n\}\smallsetminus\Omega$ pointwise. 
Moreover, $C_{\mathfrak{A}(\Omega)}(Q)=Z(Q)$. 

\medskip
Assume there are $Q$-orbits $\Omega'=\{\omega'_1,\ldots,\omega'_m\}$ 
and $\Omega''=\{\omega''_1,\ldots,\omega''_m\}$ on $\Omega$
that are isomorphic as $Q$-sets. Then we may suppose that there is an 
isomorphism of $Q$-sets mapping $\omega'_i$ to $\omega''_i$, 
for $i=1,\ldots,m$. Since, by our assumption, $Q$ acts fixed point 
freely on $\Omega$, we deduce that $m\geq 2$ is even, and therefore
the permutation $(\omega'_1,\omega''_1)\cdots (\omega'_m,\omega''_m)$
is contained in $C_{\mathfrak{A}(\Omega)}(Q)$. 
But, on the other hand, the elements in $Z(Q)$ have to fix every 
$Q$-orbit, so that 
$(\omega_1,\omega_1')\cdots (\omega_m,\omega_m')\notin Z(Q)$, 
a contradiction.

\medskip
Hence we deduce that $\Omega=\biguplus_{i=1}^k\Omega_i$
consists of pairwise non-isomorphic $Q$-orbits.
For $j=1,\ldots,k$, let $\omega_j\in\Omega_j$,
and set $R_j:=\Stab_{Q}(\omega_j)$, where 
we may choose notation such that $|R_1|\leq \ldots\leq |R_k|$. 
Then, by \cite[La. 4.3]{DK},
we have the group isomorphism
$$\varphi:\prod_{j=1}^kN_Q(R_j)/R_j\longrightarrow 
  C_{\mathfrak{S}(\Omega)}(Q) ,$$
which can be described as follows:
for $i=1,\ldots,k$, let $\pi_i:Q\longrightarrow\mathfrak{S}(\Omega_i)\cap Q$
be the canonical projection, and let 
$Z_i:=C_{\mathfrak{S}(\Omega_i)}(\pi_i(Q))$. Then
$\varphi(N_Q(R_i)/R_i)=Z_i$, for $i=1,\ldots,k$. Hence
$C_{\mathfrak{S}(\Omega)}(Q)=Z_1\times\cdots\times Z_k$, and
$Z(Q)=C_{\mathfrak{A}(\Omega)}(Q)
 =(Z_1\times\cdots\times Z_k)\cap\mathfrak{A}(\Omega)$.
Since $Q$ acts fixed point freely on $\Omega$, 
we have $R_i<Q$, thus $N_Q(R_i)>R_i$, in particular 
$|Z_i|\geq 2$. 
Since $(Z_2\times\cdots\times Z_k)\cap\mathfrak{A}(\Omega)\leq Z(Q)\leq Q$ 
acts trivially on $\Omega_1$, we have 
$(Z_2\times\cdots\times Z_k)\cap\mathfrak{A}(\Omega)\leq R_1$.
Moreover, as we have just mentioned, 
$|Z_2\times\cdots\times Z_k|\geq 2^{k-1}$, so that
for $k\geq 2$ we have
$|R_1|\geq|(Z_2\times\cdots\times Z_k)\cap\mathfrak{A}(\Omega)|\geq 2^{k-2}$. 
Therefore, for $k\geq 2$ we get 
$$|\Omega|=\sum_{i=1}^k|Q:R_i|
\leq\frac{k|Q|}{|R_1|}\leq \frac{k|Q|}{2^{k-2}} .$$ 

\medskip
Thus, for $k\geq 4$ we infer $|\Omega|\leq |Q|$. It remains 
to consider the cases $k\leq 3$: if $k=1$ then we have $|\Omega|\leq |Q|$
anyway. If $k=2$ then we have $|\Omega|\leq |Q|$, except if 
$\Omega_1$ is the regular $Q$-orbit, that is, $R_1=\{1\}$, and hence 
$Z_1\cong Q$. Now $Z_1\cap\mathfrak{A}(\Omega)\leq R_2$ entails
$$ \frac{|Q|}{2}\leq |Z_1\cap\mathfrak{A}(\Omega)|
   \leq |R_2|\leq\frac{|Q|}{2} ,$$
thus $|R_2|=|Q|/2$, hence $|Z_2|=2$, implying $|\Omega|=|Q|+2$.
Moreover, we have $|Z_1\cap\mathfrak{A}(\Omega)|=|Q|/2$, hence 
$Z_1\not\leq\mathfrak{A}(\Omega)$. Since $Z_1$ acts regularly
on $\Omega_1$ and fixes $\Omega_2$ pointwise, we infer that
$Z_1$ contains an $|\Omega_1|$-cycle, that is, $Z_1\cong Q$ is cyclic.
Note that we have $|Q|\geq 4$, implying that
$2x=|\Omega|=|Q|+2\equiv 2\pmod{4}$, that is, $x$ is odd, 
hence, by \ref{noth:fix}, we infer that $w=x$.

\medskip
Next we observe that if $R_1=\{1\}$ then $k\geq 2$ forces $k=2$, since
$1=|R_1|\geq |(Z_2\times\cdots\times Z_k)\cap\mathfrak{A}(\Omega)|
 \geq 2^{k-2}$. 
Hence, if $k=3$ then we have $|\Omega|\leq |Q|$, except if 
$|R_1|=|R_2|=2$. Thus from 
$|(Z_2\times Z_3)\cap\mathfrak{A}(\Omega)|\geq 2$ and
$|(Z_1\times Z_3)\cap\mathfrak{A}(\Omega)|\geq 2$ we deduce
$R_1=(Z_2\times Z_3)\cap\mathfrak{A}(\Omega)\leq Z(Q)$ and
$R_2=(Z_1\times Z_3)\cap\mathfrak{A}(\Omega)\leq Z(Q)$,
showing $|Z_1|=|Q|/2=|Z_2|$. This yields
$$ |Q|\geq |(Z_1\times Z_2\times Z_3)\cap\mathfrak{A}(\Omega)|\geq 
   \frac{|Q|}{2}\cdot\frac{|Q|}{2}\cdot 2\cdot\frac{1}{2}=\frac{|Q|^2}{4} ,$$ 
hence $|Q|\leq 4$. Therefore we have $Q\cong V_4$, and 
$|\Omega_1|=|\Omega_2|=|\Omega_3|=2$, thus $2x=|\Omega|=6=|Q|+2$.
Note that $x=3$, by \ref{noth:fix}, implies that $w=x=3$ as well.

\medskip
Consequently, in any case we get $|\Omega|\leq |Q|$
or, in the two exceptions, $|\Omega|=|Q|+2$ and $x=w$.
This implies $2w\leq |Q|+2$ and, since $P\leq\mathfrak{A}_{2w}$,
we get $|P|\leq\frac{(|Q|+2)!}{2}$.
\end{proof}

\begin{expl}\label{expl:cases}
\normalfont
We give a few examples, found with the help of the 
computer algebra system {\sf GAP} \cite{GAP}, 
showing that all the cases listed in Remark \ref{rem:self} actually occur. 
In particular, the exceptional cases detected in the proof of Proposition 
\ref{prop:boundAn}, namely $Q$ cyclic with two orbits of lengths $|Q|$ 
and $2$, as well as $Q\cong V_4$ with three orbits of length $2$ each,
occur for the principal block of $\mathfrak{A}_6$.

\medskip
(a)
The principal blocks of $\mathfrak{A}_4$ and of $\mathfrak{A}_5$
both have weight $w=2$, their defect groups are abelian and
conjugate to $Q_4\leq\mathfrak{A}_4$ and, in each case, up to conjugacy, 
there is a unique self-centralizing Brauer pair:
$$\begin{array}{|ll|r|r|r|r|}
\hline
Q \quad (n\in\{4,5\},\,\,\, w=2) & & |Q| & |Z(Q)| & x & d \\
\hline
\hline
\langle(1,2)(3,4),(1,3)(2,4)\rangle & \cong V_4 & 4 & 4 & 2 & 1 \\
\hline
\end{array}$$

\medskip
The non-principal blocks of $\mathfrak{A}_7$ and of $\mathfrak{A}_{10}$ 
both have weight $w=2$, their defect groups are abelian and conjugate 
to $Q_4\leq\mathfrak{A}_4$ and, in each case, up to conjugacy, 
there are two self-centralizing Brauer pairs:
$$\begin{array}{|ll|r|r|r|r|}
\hline
Q \quad (n\in\{7,10\},\,\,\, w=2) & 
  & |Q| & |Z(Q)| & x & d \\
\hline
\hline
\langle(1,2)(3,4),(1,3)(2,4)\rangle & \cong V_4 & 4 & 4 & 2 & 1 \\
\langle(1,2)(3,4),(1,3)(2,4)\rangle & \cong V_4 & 4 & 4 & 2 & 1 \\
\hline
\end{array}$$

\medskip
(b)
The principal blocks of $\mathfrak{A}_6$ and of $\mathfrak{A}_7$,
and the non-principal block of $\mathfrak{A}_9$
all have weight $w=3$, their defect groups are conjugate to 
$Q_6\leq\mathfrak{A}_6$ and, in each case, up to conjugacy, there are four 
self-centralizing Brauer pairs:
$$\begin{array}{|ll|r|r|r|r|}
\hline
Q \quad (n\in\{6,7,9\},\,\,\, w=3) & 
  & |Q| & |Z(Q)| & x & d \\
\hline
\hline
\langle(1,2)(3,4),(1,3)(2,4)\rangle & \cong V_4     & 4 & 4 & 2 & 1 \\
\langle(1,2)(3,4),(3,4)(5,6)\rangle & \cong V_4     & 4 & 4 & 3 & 2 \\
\langle(1,2)(3,4),(1,3,2,4)(5,6)\rangle & \cong C_4 & 4 & 4 & 3 & 2 \\
\langle(1,2)(5,6),(1,3)(2,4)\rangle & \cong D_8     & 8 & 2 & 3 & 2 \\
\hline
\end{array}$$

\medskip
(c)
The non-principal block of $\mathfrak{A}_{11}$ has weight $w=4$, 
its defect groups are conjugate to $Q_8\leq\mathfrak{A}_8$ 
and, up to conjugacy, there are thirty-three self-centralizing Brauer pairs.
We do not mention all of them, but just one concluding the list of
cases in Remark \ref{rem:self}:
$$\begin{array}{|ll|r|r|r|r|}
\hline
Q \quad (n=11,\,\,\, w=4) & 
  & |Q| & |Z(Q)| & x & d \\
\hline
\hline
... & & & & & \\
\langle(1,2)(3,4),(1,2)(5,6),(5,6)(7,8)\rangle & \cong C_2^3 & 8 & 8 
                                               & 4 & 2 \\
... & & & & & \\
\hline
\end{array}$$
\end{expl}

\smallskip

\begin{rem}\normalfont
With the help of the computer algebra system {\sf MAGMA} \cite{MAGMA},
and using the techniques described in \cite{DKZ},
it can be shown that all the $2$-subgroups listed above
actually occur as vertices of suitable simple modules,
with the exception of the cyclic group in (b), of course,
and the group given in (c).

\smallskip

We also point out a mistake in \cite[Cor. 6.3(iii)]{DK}, where the 
$2$-groups $Q\leq \mathfrak{S}_n$
of order 4 occurring as vertices of simple $F\mathfrak{S}_n$-modules
were classified (up to $\mathfrak{S}_n$-conjugation).
In fact, the case where $Q=P_2\times P_2$ and $w=2$
cannot occur.
\end{rem}

\section{The Double Covers of $\mathfrak{S}_n$ and $\mathfrak{A}_n$}
\label{sec:schur} 

We begin by recalling the group presentations of the 
double covers of the symmetric and alternating groups,
as well as the necessary facts about their blocks.
Then we immediately proceed to prove the bound given
in Theorem \ref{thm:vtxbnd}.

\begin{noth}\label{noth:schurcov}\normalfont
{\sl Notation.}\;
(a)
Let $n\geq 1$, and consider the group 
$\widetilde{\mathfrak{S}}_n:=\langle z,t_1,\ldots,t_{n-1}\rangle$
with relations
\begin{align*}
z^2&=1,\\
zt_i&=t_iz,\text{ for } i=1,\ldots, n-1,\\
t_i^2&=z,\text{ for } i=1,\ldots, n-1, \quad(\ast)\\
t_it_j&=zt_jt_i, \text{ for } |i-j|>1,\\
(t_it_{i+1})^3&=z,\text{ for } i=1,\ldots, n-2; \quad(\ast\ast)
\end{align*}
in particular, we have $\widetilde{\mathfrak{S}}_1:=\langle z\rangle\cong C_2$.
Note that also $\widetilde{\mathfrak{S}}_n\leq\widetilde{\mathfrak{S}}_{n+1}$,
for $n\geq 1$. Via
$\theta:\widetilde{\mathfrak{S}}_n\longrightarrow \mathfrak{S}_n,\; 
 t_i\longmapsto (i,i+1)$,
we obtain a group epimorphism with central kernel $\langle z\rangle$. 

\medskip
Replacing the relations $(\ast)$ by $t_i^2=1$, for $i=1,\ldots, n-1$,
and the relations $(\ast\ast)$ by $(t_it_{i+1})^3=1$, for $i=1,\ldots, n-2$,
we get an isoclinic group $\widehat{\mathfrak{S}}_n$, which also
is a central extension of $\mathfrak{S}_n$;
we have $\widetilde{\mathfrak{S}}_n\not\cong\widehat{\mathfrak{S}}_n$
if and only if $1\neq n\neq 6$.
In the case where $n\geq 4$, the groups $\widetilde{\mathfrak{S}}_n$ 
and $\widehat{\mathfrak{S}}_n$ are the Schur 
representation groups of the symmetric group $\mathfrak{S}_n$.
Whenever we have a subgroup $H$ of $\mathfrak{S}_n$, 
we denote its preimage under $\theta$ by $\widetilde{H}$,
and similarly its preimage in $\widehat{\mathfrak{S}}_n$ is denoted
by $\widehat{H}$.

\medskip
In particular, for $H=\mathfrak{A}_n$, 
we get $\widetilde{\mathfrak{A}}_n\unlhd \widetilde{\mathfrak{S}}_n$
and $\widehat{\mathfrak{A}}_n\unlhd\widehat{\mathfrak{S}}_n$,
where we actually always have
$\widetilde{\mathfrak{A}}_n\cong\widehat{\mathfrak{A}}_n$, and
$|\widetilde{\mathfrak{S}}_n:\widetilde{\mathfrak{A}}_n|
=|\widehat{\mathfrak{S}}_n:\widehat{\mathfrak{A}}_n|=2$,
for $n\geq 2$.
If $n\geq 4$ and $6\neq n\neq 7$ then
$\widetilde{\mathfrak{A}}_n$ is the 
universal covering group of the alternating group $\mathfrak{A}_n$.
Since we have no distinction between $\widetilde{\mathfrak{A}}_n$
and $\widehat{\mathfrak{A}}_n$ anyway, and since it will turn out
that all observations for $\widetilde{\mathfrak{S}}_n$ immediately 
translate to $\widehat{\mathfrak{S}}_n$, we from now on confine
ourselves to investigating 
$\widetilde{\mathfrak{A}}_n\unlhd \widetilde{\mathfrak{S}}_n$.

\medskip
(b)
We list the known facts concerning the block theory of 
$\widetilde{\mathfrak{A}}_n$ and $\widetilde{\mathfrak{S}}_n$
we will need, 
where we from now on suppose that $p\geq 3$, 
for the remainder of this section.
Each faithful block $B$ of $F\widetilde{\mathfrak{S}}_n$ 
can be labelled combinatorially by some integer $w\geq 0$,
called the \textit{$p$-bar weight} of $B$, and a $2$-regular 
partition $\kappa$ of $n-pw$, called the \textit{$p$-bar core of $B$};
for details we refer to \cite[Appendix 10]{HH}.
Given the $p$-bar weight $w$ of the block $B$, by \cite[Thm. (1.3)]{O1}, 
the defect groups of $B$ are the
$\widetilde{\mathfrak{S}}_n$-conjugates of the Sylow $p$-subgroups 
of $\widetilde{\mathfrak{S}}_{pw}$.
The latter in turn are via $\theta$ mapped to Sylow $p$-subgroups 
of $\mathfrak{S}_{pw}$.
\end{noth}

\medskip
Arguing along the lines of \cite{DK}, we now have:

\begin{prop}\label{prop:boundcover}
Let $p\geq 3$, and let $B$ be a faithful block of 
$F\widetilde{\mathfrak{S}}_n$ with defect group $P$.
Let further $(Q,b_Q)$ be a self-centralizing Brauer $B$-pair. 
Then we have 
$$|P|\leq |Q|!.$$
\end{prop}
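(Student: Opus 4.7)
The plan is to adapt the argument of \cite[Thm. 5.1]{DK}, which establishes the analogous bound $|P|\leq|Q|!$ for the symmetric group $\mathfrak{S}_n$, to the double cover $\widetilde{\mathfrak{S}}_n$ via the canonical epimorphism $\theta:\widetilde{\mathfrak{S}}_n\twoheadrightarrow\mathfrak{S}_n$ with kernel $Z=\langle z\rangle$ of order $2$. Since $|Z|=2$ is coprime to the odd prime $p$, the map $\theta$ induces a bijection, preserving orders, between $p$-subgroups of $\widetilde{\mathfrak{S}}_n$ and those of $\mathfrak{S}_n$; in particular, writing $\overline Q:=\theta(Q)$ and $\overline P:=\theta(P)$, we have $|\overline Q|=|Q|$ and $|\overline P|=|P|$. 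A direct calculation --- using that any non-trivial $q\in Q$ has odd order, whereas $zq$ has twice that order --- shows $C_{\widetilde{\mathfrak{S}}_n}(Q)=\theta^{-1}(C_{\mathfrak{S}_n}(\overline Q))$, and hence $QC_{\widetilde{\mathfrak{S}}_n}(Q)=\theta^{-1}(\overline Q\,C_{\mathfrak{S}_n}(\overline Q))$. By \cite[Thm. (1.3)]{O1}, the defect group $P$ of $B$ is a Sylow $p$-subgroup of $\widetilde{\mathfrak{S}}_{pw}$, where $w$ is the $p$-bar weight of $B$, so $\overline P$ is a Sylow $p$-subgroup of $\mathfrak{S}_{pw}$.

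Next I would pass the self-centralizing Brauer pair $(Q,b_Q)$ down to $\mathfrak{S}_n$. Setting $H:=QC_{\widetilde{\mathfrak{S}}_n}(Q)$ and $\overline H:=\overline Q\,C_{\mathfrak{S}_n}(\overline Q)$, the central idempotents $e_\pm:=(1\pm z)/2$ decompose $FH$ as a direct product whose $e_+$-factor is isomorphic via $\theta$ to $F\overline H$ and whose $e_-$-factor is a twisted group algebra $F^\alpha\overline H$ with $2$-cocycle $\alpha$ taking values in $\{\pm 1\}$. Since $B$ is faithful, $B\in e_-F\widetilde{\mathfrak{S}}_n$, and the compatibility of the idempotent decomposition with Brauer induction forces $b_Q\in e_-FH$ as well. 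Because $\{\pm 1\}$ is a $p'$-group and $p$ is odd, $H^2(R,\{\pm 1\})=0$ for any $p$-subgroup $R$, so the twist $\alpha$ is cohomologically trivial on $p$-subgroups; by the standard theory of twisted group algebras the Brauer-pair data (Brauer pairs, defect groups, Brauer induction) of $F^\alpha\overline H$ is then canonically identified with that of $F\overline H$. Consequently $b_Q$ gives rise to a self-centralizing Brauer pair $(\overline Q,\overline{b_Q})$ of $\mathfrak{S}_n$ whose Brauer correspondent has defect group of order $|\overline P|=|P|$.

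Applying \cite[Thm. 5.1]{DK} to $(\overline Q,\overline{b_Q})$ in $\mathfrak{S}_n$ now yields $|\overline P|\leq|\overline Q|!$, which translates back to $|P|\leq|Q|!$. The main obstacle is the identification of the twisted and untwisted Brauer-pair theories on $\overline H$: although the local triviality of the $\pm 1$-twist on $p$-subgroups makes this intuitively clear, and is standard in the literature on spin representations, carrying it out rigorously requires some care. A more elementary alternative is to redo the orbit-theoretic analysis of \cite{DK} directly inside $\widetilde{\mathfrak{S}}_n$, exploiting that $Q$ and $\overline Q$ have identical orbits on $\{1,\dots,pw\}$ and that the relevant centralizers differ only by the central $p'$-factor $Z$, so that the same orbit-counting inequality $pw\leq|Q|$ emerges and yields the required bound $|P|\leq(pw)!\leq|Q|!$.
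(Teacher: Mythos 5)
Your overall shape is right -- push everything down along $\theta$ and run the orbit-counting argument of \cite[Thm. 5.1]{DK} -- and your observations that $\theta$ is order-preserving on $p$-subgroups and that $C_{\widetilde{\mathfrak{S}}_n}(Q)=\theta^{-1}(C_{\mathfrak{S}_n}(\theta(Q)))$ (via the order argument with $zq$) are correct. But the central step is not established. What the argument really needs is that the self-centralizing hypothesis on $(Q,b_Q)$ forces $C_{\mathfrak{S}(\Omega)}(\theta(Q))=Z(\theta(Q))$, where $\Omega$ is the support of the defect group (so that the DK orbit count gives $pw=|\Omega|\leq|Q|$). Your proposed mechanism for this -- identifying the Brauer-pair data of the twisted algebra $F^{\alpha}\overline{H}$ with that of $F\overline{H}$ and transferring the global Brauer correspondence -- does not work as stated. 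First, the cocycle $\alpha$ is only trivial on $p$-subgroups of $\overline{H}=\theta(Q)C_{\mathfrak{S}_n}(\theta(Q))$, not on $\overline{H}$ itself (which contains, e.g., a full symmetric group on the fixed points); the block decompositions and defect groups of $F^{\alpha}\overline{H}$ and $F\overline{H}$ are genuinely different, so "$b_Q$ has defect group $Q$" does not automatically yield a block of $F\overline{H}$ with defect group $\theta(Q)$. Second, and more seriously, your claim that the Brauer correspondent of $\overline{b_Q}$ in $F\mathfrak{S}_n$ "has defect group of order $|P|$" is unjustified: the faithful block $B$ lives in the spin part $e_-F\widetilde{\mathfrak{S}}_n\cong F^{\alpha}\mathfrak{S}_n$, whose global twist is nontrivial, and its defect groups are governed by $p$-bar weights, whereas $\overline{b_Q}^{\mathfrak{S}_n}$ would be an ordinary block of $F\mathfrak{S}_n$ whose defect groups are governed by the unrelated $p$-core/weight combinatorics. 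So you cannot apply \cite[Thm. 5.1]{DK} as a black box to a pushed-down pair.

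The paper closes exactly this gap by quoting Cabanes \cite[Prop. 3.8 e]{Cab}, which analyses the local structure of the spin blocks directly and gives $C_{\widetilde{\mathfrak{S}}(\Omega)}(Q)=Z(Q)\times\langle z\rangle$ for a self-centralizing pair; combined with your centralizer identity this yields $C_{\mathfrak{S}(\Omega)}(\theta(Q))=Z(\theta(Q))$, and only then does the DK count apply. Your "elementary alternative" of redoing the orbit analysis inside $\widetilde{\mathfrak{S}}_n$ silently presupposes this same centralizer computation. One further point you skip: to know that $\theta(Q)$ acts fixed-point-freely on all of $\Omega$ (so that the count bounds $pw$ and not merely the number of points $Q$ moves), the paper first arranges $Z(P)\leq C_P(Q)\leq Q\leq P$ via \cite[Thm. 5.5.21]{NT} and uses that $Z(P_{pw})$ has no fixed points on $\Omega$. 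Without these two inputs the proof is incomplete.
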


\begin{proof}
Let $w$ be the $p$-bar weight of $B$. Then $P$ is conjugate 
to a Sylow $p$-subgroup of $\widetilde{\mathfrak{S}}_{pw}$, 
and $\theta(P)$ is in $\mathfrak{S}_n$ conjugate to a Sylow 
$p$-subgroup of $\mathfrak{S}_{pw}$. 
Thus we have $\theta(P)=_{\mathfrak{S}_n} P_{pw}$, 
so that there is a subset $\Omega$ of $\{1,\ldots,n\}$
with $|\Omega|=pw$ and such that $\theta(P)$ acts fixed point freely
on $\Omega$ and fixes $\{1,\ldots,n\}\smallsetminus\Omega$ pointwise.
Moreover, by \cite[Thm. 5.5.21]{NT}, 
we may assume that
\begin{equation}\label{equ:Zcover}
Z(P)\leq C_{P}(Q)\leq Q\leq P.
\end{equation}
In particular, since $\theta|_P$ is injective, we infer that
$Z(\theta(P))=\theta(Z(P))\cong Z(P)$ 
acts fixed point freely on $\Omega$ as well. By (\ref{equ:Zcover}), we have
$Z(\theta(P))\leq \theta(Q)\leq \theta(P)$,
hence $\theta(Q)$ acts fixed point freely on $\Omega$ and fixes 
$\{1,\ldots,n\}\smallsetminus\Omega$ pointwise.
That is, $\theta(Q)\leq \mathfrak{S}(\Omega)$ 
and $Q\leq \widetilde{\mathfrak{S}}(\Omega)$. By \cite[Prop. 3.8 e]{Cab}, 
we further have
$$C_{\widetilde{\mathfrak{S}}(\Omega)}(Q)=
  Z(Q)\times Z(\widetilde{\mathfrak{S}}(\Omega))=
  Z(Q)\times\langle z\rangle.$$
Since $Q$ is a $p$-group, this implies
$$C_{\mathfrak{S}(\Omega)}(\theta(Q))=
  \theta(C_{\widetilde{\mathfrak{S}}(\Omega)}(Q))=\theta(Z(Q))= 
  Z(\theta(Q)),$$
thus, applying \cite[Thm. 5.1]{DK}, we get
$|P|=|\theta(P)|\leq|\mathfrak{S}(\Omega)|=|\Omega|!\leq |\theta(Q)|!=|Q|!$.
\end{proof}

\section{The Weyl Groups $\mathfrak{B}_n$ and $\mathfrak{D}_n$}
\label{sec:weyl}

We begin by recalling the description of the Weyl groups
$\mathfrak{B}_n$ and $\mathfrak{D}_n$ of type $B_n$ and $D_n$, respectively.
Then we recall the necessary facts about their blocks,
distinguishing the cases $p$ even and $p$ odd,
in order to immediately proceed to prove the bound given
in Theorem \ref{thm:vtxbnd}.

\begin{noth}\label{noth:setting}
\normalfont
{\sl Notation.}\;
Let $C_2:=\langle (1,2)\rangle$ be the group of order 2,
and for $n\in\mathbb{N}$ 
set 
$$\mathfrak{B}_n:=C_2\wr\mathfrak{S}_n=\{(x_1,\ldots,x_n;\pi)\mid 
  x_1,\ldots , x_n\in C_2,\; \pi\in\mathfrak{S}_n\}.$$
We define 
$\mathfrak{S}_n^*:=\{(1,\ldots,1;\pi)\mid 
\pi\in\mathfrak{S}_n\}\leq \mathfrak{B}_n$, 
and denote the usual group isomorphism 
$\mathfrak{S}_n^*\longrightarrow \mathfrak{S}_n$ by $\varphi$. 
Moreover, whenever $U$ is a subgroup of $\mathfrak{S}_n$, we denote
by $U^*\leq \mathfrak{S}_n^*$ its image under $\varphi^{-1}$.
Let also 
$H:=\{(x_1,\ldots,x_n;1)\mid x_1,\ldots , x_n\in C_2\}\unlhd\mathfrak{B}_n$ 
be the base group of 
$\mathfrak{B}_n$; so $H$ is isomorphic to a direct
product of $n$ copies of $C_2$, and we have
$\mathfrak{B}_n=H\mathfrak{S}_n^*$.
We will identify $\mathfrak{B}_n$ with a subgroup of 
$\mathfrak{S}_{2n}$, in the usual way by the primitive action.
Furthermore, let $\mathfrak{D}_n:=\mathfrak{B}_n\cap \mathfrak{A}_{2n}$.
For $n\geq 2$, the group $\mathfrak{B}_n$ is isomorphic to the 
\textit{Weyl group of type $B_n$}, and for $n\geq 4$, the group
$\mathfrak{D}_n$ is isomorphic 
to the \textit{Weyl group of type $D_n$}; see \cite[4.1.33]{JK}. 
\end{noth}

\begin{rem}\normalfont
As has been pointed out by the referee, Feit's Conjecture for
the Weyl groups $\mathfrak{B}_n$ and $\mathfrak{D}_n$
can also be deduced directly from a more general theorem
on semidirect products with abelian kernel;
we will state and prove this theorem in Section \ref{sec:semidirect}
below. 
We will, however, treat the groups
$\mathfrak{B}_n$ and $\mathfrak{D}_n$
separately, 
in order to stick to our general strategy 
for proving Feit's Conjecture by relating it to Puig's Conjecture via
Theorem \ref{thm:feitred}.
\end{rem}

\begin{noth}\label{noth:weyleven}
\normalfont
{\sl The case $p=2$.}\;
(a)
Suppose first that $p=2$.
Since $H$ is a normal $2$-subgroup of $\mathfrak{B}_n$ 
such that $C_{\mathfrak{B}_n}(H)=H$ is a $2$-group,
it follows from \cite[Thm. 5.2.8]{NT} (see also \cite[Exc. 5.2.10]{NT}) that
$F\mathfrak{B}_n$ has only the principal block. 
Moreover, $H$ acts trivially on every simple $F\mathfrak{B}_n$-module. 
Thus if $Q$ is a vertex of a simple $F\mathfrak{B}_n$-module
then $H\leq Q$. In particular, we have $|Q|\geq 2^n$, hence
$n\leq \log_2(|Q|)$. Therefore, if $P$ is a Sylow $2$-subgroup
of $\mathfrak{B}_n$ then
$$|P|\leq |\mathfrak{B}_n|=2^n\cdot n!\leq |Q|\cdot\log_2(|Q|)!.$$

\medskip
(b)
To deal with $\mathfrak{D}_n$, note that 
$\mathfrak{D}_n=\mathfrak{B}_n\cap\mathfrak{A}_{2n}= 
 (H\cap\mathfrak{A}_{2n})\mathfrak{S}_{n}^*$.
An argument analogous to the one used in Part (a) above
shows that also $F\mathfrak{D}_n$ has only
the principal block and, whenever $Q$ is a vertex of a simple 
$F\mathfrak{D}_n$-module, $H\cap\mathfrak{A}_{2n}\leq Q$.
In particular, we have $|Q|\geq 2^{n-1}$, hence $n\leq\log_2(|Q|)+1$. 
Therefore, if $P$ is a Sylow $2$-subgroup of $\mathfrak{D}_n$
then we deduce
$$|P|\leq |\mathfrak{D}_n|=2^{n-1}\cdot n!\leq |Q|\cdot(\log_2(|Q|)+1)!.$$
\end{noth}

\begin{noth}\label{noth:weylblocks}
\normalfont
{\sl The case $p\geq 3$.}\;
Let now $p\geq 3$, for the remainder of this section.
We briefly recall the well-known structure of the defect groups 
of the blocks of $F\mathfrak{B}_n$. 
By the Theorem of Fong--Reynolds \cite[Thm. 5.5.10]{NT}
applied to the base group $C_2^n\cong H\unlhd\mathfrak{B}_n$,
see also \cite{Os2} or \cite[Ch. 4]{JK}, 
the blocks of $F\mathfrak{B}_n$ are parametrized by pairs
$(\kappa,w)$, with $\kappa=(\kappa_0,\kappa_1)$ and $w=(w_0,w_1)$,
and where, for $i=0,1$, the partition $\kappa_i$ is the $p$-core 
of some partition of $n_i:=|\kappa_i|+p w_i$ such that $n=n_0+n_1$.
Moreover, the inertial group of the block $B(\kappa,w)$ is given as 
$T_{\mathfrak{B}_n}(B(\kappa,w)):=
 (C_2\wr\mathfrak{S}_{n_0})\times(C_2\wr\mathfrak{S}_{n_1})
=C_2\wr (\mathfrak{S}_{n_0}\times\mathfrak{S}_{n_1})
 \leq\mathfrak{B}_n$.

\medskip
Note that 
every $p$-subgroup of $\mathfrak{B}_n$ is  
conjugate to a subgroup of $\mathfrak{S}_n^*$. 
If $P$ is a defect group of the block 
$B(\kappa,w)$ then $P$ is conjugate to a Sylow $p$-subgroup 
$P_{pw_0}^*\times P_{pw_1}^*$ of 
$\mathfrak{S}_{pw_0}^*\times\mathfrak{S}_{pw_1}^*\leq
 \mathfrak{S}_{n_0}^*\times\mathfrak{S}_{n_1}^*\leq\mathfrak{S}_n^*$.
Note that 
$\varphi(P_{pw_i}^*)=P_{pw_i}\leq \mathfrak{S}_{pw_i}\leq\mathfrak{S}_{n_i}$,
for $i=0,1$, is a defect group of the block of $\mathfrak{S}_{n_i}$ 
parametrized by $\kappa_i$.
\end{noth}

\begin{prop}\label{prop:puig_weylB}
Let $p\geq 3$, let $D$ be a simple $F\mathfrak{B}_n$-module belonging 
to a block with defect group $P$, and let $Q$ be a vertex of $D$. 
Then we have
$$ |P|\leq |Q|! .$$ 
\end{prop}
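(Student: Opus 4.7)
The plan is to reduce to the symmetric group case (Theorem \ref{thm:vtxbnd}(a)) via the Theorem of Fong--Reynolds, exploiting that the base group $H \cong C_2^n$ is a normal $p'$-subgroup of $\mathfrak{B}_n$.

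By Fong--Reynolds, as recalled in \ref{noth:weylblocks}, the block $B(\kappa,w)$ of $F\mathfrak{B}_n$ containing $D$ is Morita equivalent (via induction) to its Fong--Reynolds correspondent block $\hat{B}$ of the inertial group $T:=T_{\mathfrak{B}_n}(B(\kappa,w))=\mathfrak{B}_{n_0}\times\mathfrak{B}_{n_1}$, and this equivalence preserves defect groups as well as vertices of simple modules. Writing $D=\mathrm{ind}_T^{\mathfrak{B}_n}(D')$ for the corresponding simple $FT$-module $D'$, we may therefore assume that $Q$ is a vertex of $D'$. Since $T$ is a direct product, $D'=D_0'\otimes D_1'$ is an outer tensor product with $D_i'$ a simple $F\mathfrak{B}_{n_i}$-module, and consequently $Q$ is $T$-conjugate to $Q_0\times Q_1$, where $Q_i$ is a vertex of $D_i'$ in $\mathfrak{B}_{n_i}$.

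Next, for each $i\in\{0,1\}$, I would apply Clifford theory to the normal abelian $p'$-subgroup $H_i\leq\mathfrak{B}_{n_i}$: by construction of the Fong--Reynolds correspondence, the $\mathfrak{B}_{n_i}$-invariant linear character of $H_i$ on which $D_i'$ lies extends to a linear character $\lambda_i$ of $\mathfrak{B}_{n_i}$, so $D_i'\cong\mathrm{Inf}_{H_i}^{\mathfrak{B}_{n_i}}(E_i)\otimes\lambda_i$ for a simple $F\mathfrak{S}_{n_i}$-module $E_i$ lying in the block of $F\mathfrak{S}_{n_i}$ with $p$-core $\kappa_i$ and $p$-weight $w_i$. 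The linear twist does not affect the vertex, and since $H_i$ is a $p'$-group, inflation along $\mathfrak{B}_{n_i}\twoheadrightarrow\mathfrak{S}_{n_i}$ leaves the isomorphism type of the vertex unchanged (any $p$-subgroup of the full preimage is, by Schur--Zassenhaus, a $p$-complement to $H_i$ mapping isomorphically to a vertex of $E_i$ in $\mathfrak{S}_{n_i}$). Hence $|Q_i|=|V_i|$, where $V_i\leq\mathfrak{S}_{n_i}$ denotes a vertex of $E_i$.

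Finally, the defect group matching under Fong--Reynolds gives $|P|=|P_{pw_0}|\cdot|P_{pw_1}|$, while Theorem \ref{thm:vtxbnd}(a) applied to each $E_i$ yields $|P_{pw_i}|\leq|V_i|!$. Combining these with the elementary inequality $a!\,b!\leq(a+b)!\leq(ab)!$, valid for $a,b\geq 2$ and trivially for $\min(a,b)=1$, gives
$$|P|=|P_{pw_0}|\cdot|P_{pw_1}|\leq|V_0|!\cdot|V_1|!\leq(|V_0|\cdot|V_1|)!=|Q|!.$$
The main obstacle will be verifying carefully that the Fong--Reynolds Morita equivalence does preserve vertices of simple modules (standard but requiring some bookkeeping with source idempotents), and that the Clifford/inflation step produces a genuine simple $F\mathfrak{S}_{n_i}$-module $E_i$ whose vertex has the same order as that of $D_i'$; everything else is packaging and an elementary factorial estimate.
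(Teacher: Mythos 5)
Your proof is correct and follows essentially the same route as the paper: both use the Fong--Reynolds/Clifford description of the block $B(\kappa,w)$ to identify a vertex of $D$ as $Q_0\times Q_1$ with $Q_i$ isomorphic to a vertex of a simple $F\mathfrak{S}_{n_i}$-module in the block with $p$-core $\kappa_i$ and weight $w_i$, and then apply the symmetric-group bound together with $|Q_0|!\,|Q_1|!\leq(|Q_0|\cdot|Q_1|)!$. The only difference is presentational: the paper cites K\"ulshammer's result to read off the vertex of the explicitly constructed simple module $\ind_{T}^{\mathfrak{B}_n}(M)$ directly, where you instead route through the Fong--Reynolds Morita equivalence, a linear-character twist, and inflation along the $p'$-kernel $H_i$.
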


\begin{proof}
Let $B(\kappa,w)$ be the block in question,
and $n_i:=|\kappa_i|+p w_i$, for $i=0,1$.
Hence we may assume that
$Q\leq P=P_{pw_0}^*\times P_{pw_1}^*\leq
 \mathfrak{S}_{n_0}^*\times\mathfrak{S}_{n_1}^*\leq\mathfrak{S}_n^*$.
Let 
$T:=T_{\mathfrak{B}_n}(B(\kappa,w))
 =(C_2\wr\mathfrak{S}_{n_0})\times(C_2\wr\mathfrak{S}_{n_1})
 \leq\mathfrak{B}_n$
be the inertial group associated with $B(\kappa,w)$.
Then the simple $F\mathfrak{B}_n$-modules belonging to $B(\kappa,w)$
can be described as follows (see \cite[Sec. 4.3]{JK}, and \cite{Os2} ):

\medskip
Let $F$ be the trivial $FC_2$-module, and let $E$ be the non-trivial
simple $FC_2$-module. Then the outer tensor product $F^{\otimes n_0}\otimes_F E^{\otimes n_1}$
naturally becomes an $FT$-module.
Letting $D_i$, for $i=0,1$, be a
simple $F\mathfrak{S}_{n_i}$-module
in the block parametrized by $\kappa_i$,
the tensor product $D_0 \otimes_F D_1$ becomes an 
$F[\mathfrak{S}_{n_0}\times\mathfrak{S}_{n_1}]$-module with respect
to the outer-tensor-product action. Inflating with respect to 
the base group $C_2^{n_0}\times C_2^{n_1}\unlhd T$ yields 
the simple $FT$-module
$\Inf_{C_2^{n_0}\times C_2^{n_1}}^{T}(D_0 \otimes_F D_1)$.
Then inducing the ordinary tensor product
$$ M:=(F^{\otimes n_0}\otimes_F E^{\otimes n_1})\otimes_F
     \Inf_{C_2^{n_0}\times C_2^{n_1}}^{T}(D_0 \otimes_F D_1) ,$$
which is a simple $FT$-module, to $\mathfrak{B}_n$ we 
get a simple $F\mathfrak{B}_n$-module
$\ind_{T}^{\mathfrak{B}_n}(M)$ belonging to $B(\kappa,w)$. 
Conversely, every simple
$F\mathfrak{B}_n$-module belonging to the block
$B(\kappa,w)$ arises in such a way. 

\medskip
Hence,
$D\cong \ind_{T}^{\mathfrak{B}_n}(M)$,
for suitably chosen $D_0$ and $D_1$.
Let $Q_i\leq\mathfrak{S}_{n_i}$ be a vertex of $D_i$, for $i=0,1$,
where we may assume that $Q_i\leq P_{pw_i}$.
Hence $Q_0\times Q_1\leq\mathfrak{S}_{n_0}\times\mathfrak{S}_{n_1}$
is a vertex of the outer tensor product $D_0 \otimes_F D_1$.
Since $F$ and $E$ are projective $FC_2$-modules, 
letting $Q_i^*:=\varphi^{-1}(Q_i)$, it follows from \cite{Ku} that 
$Q_0^*\times Q_1^*\leq\mathfrak{S}_{n_0}^*\times\mathfrak{S}_{n_1}^*\leq 
 \mathfrak{S}_{n}^*\leq\mathfrak{B}_n$
is a vertex of $\ind_{T}^{\mathfrak{B}_n}(M)\cong D$.

\medskip
By \cite{DK} we have $pw_i\leq |Q_i|$, and hence 
$$ |P|=|P_{pw_0}^*|\cdot|P_{pw_1}^*|
  \leq (pw_0)!\cdot(pw_1)!\leq |Q_0|!\cdot |Q_1|!
  \leq (|Q_0|\cdot |Q_1|)!=|Q|! .
$$
\end{proof}

\section{Semidirect Products}\label{sec:semidirect}

As was pointed out by the referee, Feit's conjecture 
can be proven for general semidirect products with 
abelian kernel, provided it holds for the complements
occurring and all their subgroups. We proceed to state
and prove this.

\begin{noth}\label{noth:semidirect}
\normalfont
{\sl Simple modules of semidirect products.}\, Suppose that
$G$ is a semidirect product $G=H\rtimes_\alpha U$ of
an abelian group $H$ with a group $U$, with respect to 
a group homomorphism $\alpha:U\longrightarrow\Aut(H)$.
We recall the well-known construction of the simple $FG$-modules from
those of subgroups of $G$, which is a consequence of Clifford's Theorem;
for a proof we refer to \cite[Thm. 11.1, Thm. 11.20, Exc. 11.13]{CRI}.

\medskip
Suppose that $E$ is a simple $FH$-module, which is, in particular,
one-dimensional, since $H$ is abelian. 
Let further $T_G(E)$ be the inertial group of $E$ in $G$; thus
$T_G(E)=H\rtimes_\alpha (U\cap T_G(E))$,
where we denote the restriction of $\alpha$ to $U\cap T_G(E)$ by
$\alpha$ again.
Then we can extend $E$ to a simple $FT_G(E)$-module, 
which we denote by $E$ again, by letting
$$(hu)\cdot x:= hx\quad (h\in H, u\in U\cap T_G(E),\, x\in E) ;$$
to see that this indeed yields an $FT_G(E)$-module 
just note that we have
$$ {}^u h\cdot x = h\cdot x \quad (h\in H, u\in U\cap T_G(E),\, x\in E). $$

Set $T_{U,\alpha}(E):=U\cap T_G(E)$, hence we have
$T_G(E)/H\cong T_{U,\alpha}(E)$,
and let $E'$ be a simple $FT_{U,\alpha}(E)$-module. Then the inflation
$\Inf_H^{T_G(E)}(E')$ is a simple $FT_G(E)$-module, as is the tensor
product $E\otimes_F\Inf_H^{T_G(E)}(E')$. Moreover, the induction
$D(E,E'):=\ind_{T_G(E)}^G(E\otimes_F\Inf_H^{T_G(E)}(E'))$ 
is a simple $FG$-module.

\medskip
Now, let $\mathcal{E}$ be
a transversal for the isomorphism classes of simple
$FH$-modules.
Then, as $E$ varies over $\mathcal{E}$,
and $E'$ varies over 
a transversal for the isomorphism classes of
simple $FT_{U,\alpha}(E)$-modules, 
$D(E,E')$ varies over a transversal for the isomorphism classes of simple
$FG$-modules.
\end{noth}

With the above notation, we have the following
\begin{thm}\label{thm:semidirect}
Let $H$ be an abelian group, let $\mathcal{U}$ be a set
of groups, 
and let
$$ H\rtimes\mathcal{U}:=\{H\rtimes_\alpha U\mid  
   U\in\mathcal{U},\, \alpha:U\longrightarrow\Aut(H) \} .$$
Suppose that Feit's Conjecture holds for
$$ \mathcal{T}(H\rtimes\mathcal{U}):=\{T_{U,\alpha}(E)\mid
   U\in\mathcal{U},\, \alpha:U\longrightarrow\Aut(H),\, E\in\mathcal{E}\} .$$
Then Feit's Conjecture holds for 
$H\rtimes\mathcal{U}$ as well.
\end{thm}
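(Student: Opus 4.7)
The plan is to combine the Clifford-theoretic description of the simple $FG$-modules for $G = H \rtimes_\alpha U$ given in \ref{noth:semidirect} with the standard behaviour of vertex-source pairs under induction, tensoring with one-dimensional modules, and inflation from a normal subgroup. Fix a $p$-group $Q$, and suppose $(Q, L)$ is a vertex-source pair of some simple $FG$-module $M$ with $G = H \rtimes_\alpha U$ and $U \in \mathcal{U}$. By the construction in \ref{noth:semidirect}, one has $M \cong D(E, E') = \ind_{T}^{G}(\hat{E} \otimes_F \Inf_H^{T}(E'))$ for some $E \in \mathcal{E}$ and simple $FV$-module $E'$, where $V := T_{U,\alpha}(E) \in \mathcal{T}(H \rtimes \mathcal{U})$, $T := T_G(E) = H \rtimes V$, and $\hat{E}$ denotes the one-dimensional extension of $E$ to $T$. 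Since $D(E, E')$ is simple and hence indecomposable, its vertex-source pair coincides with that of the $FT$-module $M_0 := \hat{E} \otimes_F \Inf_H^{T}(E')$.

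Next, one reduces further to the vertex-source pair of $\Inf_H^{T}(E')$. Tensoring with a one-dimensional $FT$-module is an auto-equivalence of the module category and preserves vertices via the projection formula for induction. Moreover, since $Q$ is a $p$-group and $F$ has characteristic $p$, the character $\hat{E}|_Q$ is forced to be trivial, so on restricting to $Q$ the twist by $\hat{E}$ disappears and a $Q$-source of $M_0$ is also a $Q$-source of $\Inf_H^{T}(E')$. Writing $H = H_p \times H_{p'}$, both factors are characteristic in $H$ and hence normal in $T$, so one may factor the surjection $T \twoheadrightarrow V = T/H$ as $T \twoheadrightarrow T/H_{p'} \twoheadrightarrow V$: an inflation along a $p'$-kernel followed by one along a $p$-kernel. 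Combining the known descriptions of vertex and source under these two elementary types of inflation (see \cite[Prop. 2.1]{Ku} and \cite[Prop. 2]{Harr} for the $p$-kernel case, and a Schur--Zassenhaus argument for the unique lift in the $p'$-kernel case), one obtains that a vertex of $\Inf_H^T(E')$ is of the form $H_p \rtimes Q' \leq T$ with $Q' \leq V$ a vertex of $E'$, and that the corresponding source is $\Inf_{H_p}^{H_p \rtimes Q'}(L')$ for a $Q'$-source $L'$ of $E'$.

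To conclude, note that $H_p$ is a fixed finite abelian $p$-group (since $H$ is fixed), and embeds into $Q$ as a normal subgroup in only finitely many ways; each such embedding yields a quotient of $Q$ of order at most $|Q|$, and in particular there are only finitely many isomorphism types of $Q'$ appearing. By assumption, Feit's Conjecture holds for the set $\mathcal{T}(H \rtimes \mathcal{U})$, so for each such $Q'$ there are only finitely many equivalence classes of vertex-source pairs $(Q', L')$ arising from simple modules of groups in $\mathcal{T}(H \rtimes \mathcal{U})$. Since $L \cong \Inf_{H_p}^{Q}(L')$ is determined by $L'$ together with the embedding of $H_p$ into $Q$, the set $\mathcal{V}_{H \rtimes \mathcal{U}}(Q)$ is finite, which is Feit's Conjecture for $H \rtimes \mathcal{U}$.

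The main technical hurdle will be the careful analysis of the inflation step when $H$ has both a nontrivial $p$-part and a nontrivial $p'$-part: the vertex-source behaviour under the $p'$-kernel inflation requires the lift furnished by Schur--Zassenhaus, and the source must be tracked consistently through both the $p'$-kernel and the $p$-kernel steps. A secondary subtlety is the verification that tensoring by $\hat{E}$ in the presence of a $p$-group vertex really does reduce to no twist at all, which relies essentially on the vanishing of nontrivial characters of $p$-groups in characteristic $p$.
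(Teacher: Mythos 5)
Your proposal is correct and follows essentially the same route as the paper: decompose the simple module via Clifford theory as $\ind_{T_G(E)}^G(E\otimes_F\Inf_H^{T_G(E)}(E'))$, strip the induction (using indecomposability of $D$) and the one-dimensional twist, and then transfer the vertex-source pair through the inflation to reduce to Feit's Conjecture for $T_{U,\alpha}(E)$. The only cosmetic difference is that you re-derive the inflation step by splitting $H$ into its $p$- and $p'$-parts and invoking Schur--Zassenhaus, whereas the paper simply applies \cite[Prop.~2.1]{Ku} and \cite[Prop.~2]{Harr} directly to the abelian normal subgroup $H$, obtaining that $QH/H$ is a vertex of $E'$ with source $\bar{L}$ and $L=\res_Q^{QH}(\Inf_H^{QH}(\bar{L}))$.
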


\begin{proof}
Let $G=H\rtimes_\alpha U$ be a group in 
$H\rtimes\mathcal{U}$,
and let $D$ be a simple $FG$-module.
As we have just seen in \ref{noth:semidirect}, 
there are a simple $FH$-module $E$ and
a simple $FT_{U,\alpha}(E)$-module $E'$ such that
$$D\cong\ind_{T_G(E)}^G(E\otimes_F\Inf_H^{T_G(E)}(E')).$$

Since $\dim_F(E)=1$, it is a trivial-source module,
and moreover tensoring with $E$ is a vertex- and source-preserving
auto-equivalence of the module category of $T_G(E)$.
Hence every vertex of $\Inf_H^{T_G(E)}(E')$ is a also vertex of
$E\otimes_F\Inf_H^{T_G(E)}(E')$, and every source of 
$\Inf_H^{T_G(E)}(E')$ is also a source of
$E\otimes_F\Inf_H^{T_G(E)}(E')$.
Moreover, $\ind_{T_G(E)}^G(E\otimes_F\Inf_H^{T_G(E)}(E'))$ 
has some indecomposable direct summand that has 
a vertex and an associated source in common
with $E\otimes_F\Inf_H^{T_G(E)}(E')$. Thus, since $D$ 
is, in particular, indecomposable, 
the vertex-source pairs of $\Inf_H^{T_G(E)}(E')$
and those of $D$ coincide.

\medskip
So, suppose that $Q$ is a vertex of $\Inf_H^{T_G(E)}(E')$,
and $L$ is a $Q$-source.
Then, by \cite[Prop. 2.1]{Ku} and \cite[Prop. 2]{Harr}, 
we deduce that $QH/H$ is a vertex of $E'$ and
that $L= \res_Q^{QH}(\Inf_H^{QH}(\bar{L}))$, for some
$QH/H$-source $\bar{L}$ of $E'$.

\medskip
Consequently, given any $p$-group $Q$, the above arguments imply
$$|\mathcal{V}_{H\rtimes\mathcal{U}}(Q)|\leq 
  |\mathcal{V}_{\mathcal{T}(H\rtimes\mathcal{U})}(QH/H)|.$$
Since the latter cardinality is finite by our hypothesis, the 
assertion of the theorem follows.
\end{proof}


\medskip
{\sc S.D.:
Mathematical Institute, University of Oxford \\
24-29 St Giles', Oxford, OX1 3LB, UK} \\
{\sf danz@maths.ox.ac.uk}

\medskip
{\sc J.M.:
Lehrstuhl D f\"ur Mathematik, RWTH Aachen \\
Templergraben 64, D-52062 Aachen, Germany} \\
{\sf Juergen.Mueller@math.rwth-aachen.de}


\begin{thebibliography}{00}

\bibitem{Ben} D. J. Benson, Spin modules for symmetric groups, 
J. London Math. Soc. (2) \textbf{38} (1988), 250--262.

\bibitem{MAGMA} W. Bosma, J. Cannon, C. Playoust, 
The Magma algebra system. I. The user language, 
J. Symbolic Comput. \textbf{24} (3--4) (1997), 235--265.

\bibitem{Cab} M. Cabanes, Local structure of the $p$-blocks of
$\widetilde{\mathfrak{S}}_n$, Math. Z. \textbf{198} (1988), 519--543.

\bibitem{CRI} C. W. Curtis, I. Reiner, Methods of representation theory I,
Wiley-Interscience, New York, 1981.

\bibitem{Dade} E. C. Dade, The Green correspondents of simple
group modules, J. Algebra \textbf{78} (1982), 357--371.

\bibitem{DK} S. Danz, B. K\"ulshammer, Vertices of small order for 
simple modules of finite symmetric groups, 
Algebra Colloq. \textbf{17} (2010), 75--86.

\bibitem{DKspin} S. Danz, B. K\"ulshammer, The vertices
and sources of the basic spin module for the symmetric group
in characteristic $2$, J. Pure Appl. Algebra \textbf{213} (2009), 1264--1282.

\bibitem{DKZ} S. Danz, B. K\"ulshammer, R. Zimmermann,
On vertices of simple modules for symmetric groups of small degrees,
J. Algebra \textbf{320} (2008), 680--707.

\bibitem{EE} K. Erdmann, 
On $2$-blocks with semidihedral defect groups, 
Trans. Amer. Math. Soc. \textbf{256} (1979), 267--287.

\bibitem{E} K. Erdmann, 
Principal blocks of groups with dihedral Sylow $2$-subgroups, 
Comm. Algebra \textbf{5} (1977), 665--694.

\bibitem{Erd} K. Erdmann, Blocks and simple modules
with cyclic vertices, Bull. London Math. Soc. \textbf{9} (1977), 216--218.

\bibitem{Feit1} W. Feit, Some consequences of the classification 
of finite simple groups, Proc. Symp. Pure Math.
\textbf{37} (1980), 175--181.

\bibitem{Feit} W. Feit, The representation theory of finite groups, 
North-Holland Mathematical Library, 25. North-Holland Publishing 
Co., Amsterdam-New York, 1982.

\bibitem{GAP} The GAP Group,
{\sf GAP} --- Groups, Algorithms, and Programming,
Version 4.4.12, 2008, {\sf http://www.gap-system.org}.

\bibitem{Green} J. A. Green, On the indecomposable representations of 
a finite group, Math. Zeitschr. \textbf{70} (1959), 430--445.

\bibitem{Harr} M. E. Harris, A note on the Green invariants in 
finite group modular representative theory, 
Results Math.  \textbf{51}  (2008),  249--259.

\bibitem{HH} P. N. Hoffman, J. F. Humphreys, Projective representations 
of symmetric groups, Clarendon Press, Oxford, 1992.

\bibitem{JK} G. D. James, A. Kerber, The representation theory of the
symmetric group, Encyclopedia Math. Appl., vol. 16,
Addison-Wesley, Reading, 1981.

\bibitem{Kess3} R. Kessar, Blocks and source algebras for the 
double covers of the
symmetric and alternating groups, J. Algebra \textbf{186} (1996), 872--933.

\bibitem{Kess2} R. Kessar, Equivalences for blocks of the Weyl groups, 
Proc. Amer. Math. Soc.  \textbf{128}  (2000), 337--346.


\bibitem{Kess} R. Kessar, Scopes reduction for blocks of finite 
alternating groups, Q. J. Math. \textbf{53} (2002), 443--454.

\bibitem{Kn} R. Kn\"orr, On the vertices of irreducible modules, 
Ann. Math. \textbf{110} (1979), 487--499.

\bibitem{Ku} B. K\"ulshammer, Some indecomposable modules and 
their vertices, J. Pure Appl. Algebra
\textbf{86} (1993), 65--73.

\bibitem{MueZim} J. M\"uller, R. Zimmermann,
   Green vertices and sources of simple modules
   of the symmetric group labelled by hook partitions,
   Arch. Math. \textbf{89} (2007), 97--108.

\bibitem{NT} H. Nagao, Y. Tsushima, Representations of finite groups, 
Academic Press, San Diego, 1989

\bibitem{O} J. B. Olsson, Lower defect groups in symmetric groups, 
J. Algebra \textbf{104} (1986), 37--56.

\bibitem{O1} J. B. Olsson, On the $p$-blocks of the symmetric and 
alternating groups and their covering groups, J. Algebra \textbf{128}
(1990), 188--213.

\bibitem{Os2} M. Osima, On the representations of the generalized
symmetric group II, Math. J.
Okayama Univ. \textbf{6} (1956-57), 81--97.
 
\bibitem{Puigpsol} L. Puig, Notes sur les alg\`ebres de Dade, 
unpublished manuscript, 1988.

\bibitem{Pu} L. Puig, On Joanna Scopes' criterion of equivalence
for blocks of symmetric groups, 
Algebra Colloq. \textbf{1} (1994), 25--55.

\bibitem{Puig} L. Puig, 
Block source algebras in $p$-solvable groups,
Michigan Math. J. \textbf{58} (2009), 323--338.

\bibitem{Sc} J. Scopes, Cartan matrices and Morita equivalence 
for blocks of the symmetric groups,
J. Algebra \textbf{142} (1991), 441--455.

\bibitem{Thev} J. Th\'evenaz, $G$-algebras and modular respresentation
theory, Oxford University Press, Oxford, 1995.


\bibitem{Zh} J. Zhang, Vertices of irreducible representations 
and a question of L. Puig, Algebra Colloq. \textbf{1} (1994), 139--148.

\end{thebibliography}
\end{document}